\documentclass[12pt,a4paper]{amsart}
\usepackage{amssymb,eucal}
\usepackage[all,cmtip]{xy}

\pagestyle{plain}
\raggedbottom

\textwidth=36pc
\calclayout

\emergencystretch=2em

\newcommand{\+}{\nobreakdash-}
\renewcommand{\:}{\colon}

\newcommand{\rarrow}{\longrightarrow}

\newcommand{\ot}{\otimes}

\DeclareMathOperator{\Hom}{Hom}
\DeclareMathOperator{\Ext}{Ext}
\DeclareMathOperator{\Cohom}{Cohom}
\DeclareMathOperator{\im}{im}
\DeclareMathOperator{\coker}{coker}

\newcommand{\Modl}{{\operatorname{\mathsf{--Mod}}}}
\newcommand{\Modr}{{\operatorname{\mathsf{Mod--}}}}
\newcommand{\Modrlf}{{\operatorname{\mathsf{Mod_\lf--}}}}
\newcommand{\Contra}{{\operatorname{\mathsf{--Contra}}}}
\newcommand{\Contrar}{{\operatorname{\mathsf{Contra--}}}}
\newcommand{\Comodl}{{\operatorname{\mathsf{--Comod}}}}
\newcommand{\Comodr}{{\operatorname{\mathsf{Comod--}}}}
\newcommand{\Comodrlf}{{\operatorname{\mathsf{Comod_\lf--}}}}
\newcommand{\comodl}{{\operatorname{\mathsf{--comod}}}}
\newcommand{\comodr}{{\operatorname{\mathsf{comod--}}}}
\newcommand{\contra}{{\operatorname{\mathsf{--contra}}}}
\newcommand{\Vect}{{\operatorname{\mathsf{--Vect}}}}
\newcommand{\vect}{{\operatorname{\mathsf{--vect}}}}

\newcommand{\Ab}{\mathsf{Ab}}
\newcommand{\Br}{\mathsf{Br}}
\newcommand{\TL}{\mathsf{TL}}

\newcommand{\lf}{{\mathsf{lf}}}

\newcommand{\sop}{{\mathsf{op}}}
\newcommand{\id}{{\mathrm{id}}}

\newcommand{\lrarrow}{\mskip.5\thinmuskip\relbar\joinrel\relbar
   \joinrel\rightarrow\mskip.5\thinmuskip\relax}

\newcommand{\bigtimes}{\mathop{\text{\huge $\times$}}}

\newcommand{\oc}{\mathbin{\text{\smaller$\square$}}}

\newcommand{\fK}{\mathfrak K}
\newcommand{\fP}{\mathfrak P}
\newcommand{\fQ}{\mathfrak Q}
\newcommand{\fR}{\mathfrak R}
\newcommand{\fS}{\mathfrak S}
\newcommand{\fT}{\mathfrak T}
\newcommand{\fW}{\mathfrak W}

\newcommand{\LL}{\mathcal L}
\newcommand{\M}{\mathcal M}
\newcommand{\N}{\mathcal N}
\newcommand{\C}{\mathcal C}
\newcommand{\D}{\mathcal D}
\newcommand{\B}{\mathcal B}

\newcommand{\sE}{\mathsf E}

\newcommand{\boZ}{\mathbb Z}

\newcommand{\Section}[1]{\bigskip\section{#1}\medskip}
\setcounter{tocdepth}{1}

\theoremstyle{plain}
\newtheorem{thm}{Theorem}[section]
\newtheorem{prop}[thm]{Proposition}
\newtheorem{lem}[thm]{Lemma}
\newtheorem{cor}[thm]{Corollary}
\theoremstyle{definition}
\newtheorem{rem}[thm]{Remark}
\newtheorem{ex}[thm]{Example}
\newtheorem{exs}[thm]{Examples}
\newtheorem{defn}[thm]{Definition}
\newtheorem{constr}[thm]{Construction}
\newtheorem{qst}[thm]{Question}

\begin{document}

\title{Comodules and contramodules over coalgebras \\
associated with locally finite categories}

\author{Leonid Positselski}

\address{Institute of Mathematics, Czech Academy of Sciences \\
\v Zitn\'a~25, 115~67 Prague~1 \\ Czech Republic} 

\email{positselski@math.cas.cz}

\begin{abstract}
 We explain how to attach a coalgebra $\C$ over a field~$k$ to a small
$k$\+linear category $\sE$ satisfying suitable finiteness conditions.
 In this context, we study full-and-faithfulness of the contramodule
forgetful functor, and describe explicitly the categories of locally
finite left $\C$\+comodules and left $\C$\+contramodules as certain
full subcategories of the category of left $\sE$\+modules.
\end{abstract}

\maketitle

\tableofcontents

\section*{Introduction}
\medskip

 Let $\sE$ be a small preadditive category.
 Then \emph{left\/ $\sE$\+modules} are defined as covariant additive
functors $\sE\rarrow\Ab$, while \emph{right\/ $\sE$\+modules} are
the contravariant additive functors $\sE^\sop\rarrow\Ab$ (where $\Ab$
is the category of abelian groups).
 The category of left $\sE$\+modules $\sE\Modl$ is a Grothendieck
abelian category with a set of small projective generators indexed by
the objects of the category~$\sE$.
 However, when $\sE$ has infinitely many nonzero objects, the category
$\sE\Modl$ does \emph{not} admit a single small projective generator.
 So the category $\sE\Modl$ is similar to, but different from
the usual categories of modules over (associative, unital) rings.
 One speaks of $\sE\Modl$ as the category of modules over the ``ring
with several objects'' $\sE$, or as the category of modules over
a ``locally unital'' (but nonunital) ring $R_\sE=\bigoplus_{x,y\in\sE}
\Hom_\sE(x,y)$ associated with~$\sE$.

 In the language of topological algebra, one can always describe
the abelian category $\sE\Modl$ as the category of discrete modules,
or if one prefers, the category of contramodules over the topological
ring $\fR_\sE$ associated with~$\sE$.
 There are two versions of $\fR_\sE$, the left and the right one.
 Let us make out choice and put $\fR_\sE=
\prod_{y\in\sE}\bigoplus_{x\in\sE}\Hom_\sE(x,y)$.
 Then $\fR_\sE$ is the ring of infinite, row-finite matrices whose
entries are morphisms in~$\sE$.
 We endow $\fR_\sE$ with the obvious topology in which the rows
$\bigoplus_{x\in\sE}\Hom_\sE(x,y)$ are discrete abelian groups, while
the whole ring $\fR_\sE$ carries the product topology of the discrete
topologies of the rows.

 Then one can easily see that the category of discrete right
$\fR_\sE$\+modules is naturally equivalent to the category of
right $\sE$\+modules $\Modr\sE$.
 On the other hand, applying the result of~\cite[Corollary~6.3 and
Theorem~7.1]{PS1} to the projective generator $\bigoplus_{x\in\sE}
\Hom_{\sE}(x,{-})$ of the category of left $\sE$\+modules $\sE\Modl$
shows that $\sE\Modl$ is naturally equivalent to the category of
left $\fR_\sE$\+contramodules.

 In this paper, we take a different approach and assign (discrete)
\emph{coalgebras} over the field~$k$ to some $k$\+linear categories.
 For any coalgebra $\C$ over~$k$, the dual vector space $\C^*$ has
a natural structure of topological algebra over~$k$; but most
topological algebras do not arise in this way.
 Furthermore, the dual vector space to an infinite-dimensional algebra
usually does not have a coalgebra structure.
 So one needs to impose some restrictions on a $k$\+linear category
$\sE$ in order to produce a coalgebra from it.
 We say that $\sE$ is \emph{locally finite} if two conditions are
satisfied: the $k$\+vector space $\Hom_\sE(x,y)$ is finite-dimensional
for all objects $x$, $y\in\sE$, and the set of all objects~$z$ for
which there exist finite chains of composable nonzero morphisms
$x\rarrow\dotsb\rarrow z$ and $z\rarrow\dotsb\rarrow y$ is
finite for all $x$, $y\in\sE$.
 For a locally finite $k$\+linear category $\sE$, we construct
a coassociative, counital coalgebra~$\C_\sE$.
 Let us warn the reader that the dual topological algebra $\C_\sE^*$
is still \emph{not} isomorphic to the topological algebra $\fR_\sE$ in
most cases.

 Given a $k$\+linear category $\sE$, additive functors $\sE\rarrow\Ab$
are the same things as $k$\+linear functors $\sE\rarrow k\Vect$ (where
$k\Vect$ is the category of $k$\+vector spaces).
 So one can interpret left $\sE$\+modules as covariant $k$\+linear
functors $\sE\rarrow k\Vect$.
 We explain that to every left $\C_\sE$\+comodule, as well as to every
left $\C_\sE$\+contramodule, one can naturally assign a left
$\sE$\+module.
 So there are forgetful functors $\C_\sE\Comodl\rarrow\sE\Modl$ and
$\C_\sE\Contra\rarrow\sE\Modl$.
 The former functor is always fully faithful; so it is perhaps better
to call it the comodule \emph{inclusion} functor.
 Using the technique originally developed for conilpotent coalgebras in
the paper~\cite[Theorem~2.1]{Psm}, we spell out a sufficient condition
on the category $\sE$ (which we call \emph{lower strict local
finiteness}) implying that the contramodule forgetful functor is fully
faithful as well.
 This is the main result of a longish
Section~\ref{contramodule-full-and-faithfulness-secn};
see Theorem~\ref{main-full-and-faithfulness-theorem}.

 A left $\sE$\+module $N$ is said to be \emph{locally finite} if
the corresponding $k$\+linear functor takes values in the category
of finite-dimensional vector spaces $k\vect$, that is
$N\:\sE\rarrow k\vect$.
 A left $\C_\sE$\+comodule or a left $\C_\sE$\+contramodule is called
\emph{locally finite} if the corresponding left $\sE$\+module is
locally finite.
 The main results of this paper
(Theorems~\ref{locally-finite-comodules-described}
and~\ref{locally-finite-contramodules-described}) describe the full
subcategories of locally finite left $\C_\sE$\+comodules and locally
finite left $\C_\sE$\+contramodules inside the ambient abelian
category of left $\sE$\+modules.
 Specifically, a locally finite left $\sE$\+module $M$ arises from
a left $\C_\sE$\+comodule if and only if, for every object $x\in\sE$,
the set of all objects $y\in\sE$ with nonzero action map
$\Hom_\sE(x,y)\ot_k M(x)\rarrow M(y)$ is finite.
 A locally finite left $\sE$\+module $P$ arises from a left
$\C_\sE$\+contramodule if and only if, for every object $y\in\sE$,
the set of all objects $x\in\sE$ with nonzero action map
$\Hom_\sE(x,y)\ot_k P(x)\rarrow P(y)$ is finite.
 The comodule case is rather easy; the proof of the contramodule
version is much more involved and depends on the assumption of
lower strict local finiteness of the $k$\+linear category~$\sE$.

 It is clear that, for any $k$\+linear category $\sE$, the vector
space dualization functor $N\longmapsto P=N^*$, \ $P(x)=N(x)^*=
\Hom_k(N(x),k)$ for all $x\in\sE$, provides an anti-equivalence between
the abelian categories of locally finite right and left $\sE$\+modules.
 It follows from the main results of this paper that the vector
space dualization functor $\N\longmapsto\fP=\N^*=\Hom_k(\N,k)$
also provides an anti-equivalence between the abelian categories of
locally finite right $\C_\sE$\+comodules and locally finite
left $\C_\sE$\+contramodules for a lower strictly locally finite
$k$\+linear category~$\sE$.
 This observation is made in Corollary~\ref{anti-equivalence-corollary}.

 Given a coalgebra $\C$ over a field~$k$, is the essential image of
the comodule inclusion functor $\C\Comodl\rarrow\C^*\Modl$ closed
under extensions?
 More generally, given a dense subring $R\subset\C^*$, is
the essential image of the inclusion functor $\C\Comodl\rarrow
R\Modl$ closed under extensions?
 There is a vast body of literature on this question, including
such papers as~\cite{Rad,Shu,Lin,CNO,Cua,TT,Iov}.
 In particular, for a conilpotent coalgebra $\C$ (called ``pointed
irreducible'' in the traditional terminology of~\cite{Swe,Mon}),
the full subcategory $\C\Comodl$ is closed under extensions
in $\C^*\Modl$ if and only if the full subcategory $\Comodr\C$
is closed under extensions in $\Modr\C^*$, and if and only if
the coalgebra $\C$ is finitely cogenerated~\cite[Corollary~2.4 and
Section~2.5]{Rad}, \cite[Theorem~4.6]{Shu}, \cite[Corollary~21]{Lin},
\cite[Theorem~2.8]{CNO}, \cite[Proposition~3.13 and
Corollary~3.14]{Cua}, \cite[Lemma~1.2 and Theorem~4.8]{Iov}.
 The same property characterizes the conilpotent coalgebras $\C$ for
which the contramodule forgetful functor $\C\Contra\rarrow
\C^*\Modl$ is fully faithful~\cite[Theorem~2.1]{Psm},
\cite[Example~7.2]{Phff}.

 In Section~\ref{extension-closure-secn} of this paper, we show that
the full subcategory $\Comodr\C_\sE$ is closed under extensions in
$\Modr\sE$ for any lower strictly locally finite $k$\+linear
category~$\sE$.
 This observation is not really new, but it complements the main results
of this paper nicely; so we include it for the benefit of the reader.
 Under the same assumptions, the full subcategory of \emph{locally
finite} left $\C_\sE$\+contramodules is closed under extensions in
the category of (locally finite or arbitrary) left $\sE$\+modules.
 Based on the discussion of the conilpotent case in~\cite{Phff}
(see~\cite[Theorems~6.2 and~7.1  for $n=2$]{Phff}) we expect that
the lower strict local finiteness assumption is \emph{not} sufficient
to make the whole category of (locally infinite) left
$\C_\sE$\+contramodules closed under extensions in $\sE\Modl$.

 In the final Section~\ref{upper-lower-secn}, we discuss
\emph{upper finite} $k$\+linear categories $\sE$, for which
the abelian category of left $\C_\sE$\+comodules is always equivalent
to the abelian category of left $\sE$\+modules.
 Dual-analogously, for a \emph{lower finite} $k$\+linear category
$\sE$, the abelian category of left $\C_\sE$\+contramodules is
equivalent to the abelian category of left $\sE$\+modules.
 For right comodules and right contramodules, the roles of
the ``upper'' and ``lower'' are reversed.
 These special cases are much simpler than those of arbitrary
locally finite $k$\+linear categories or lower strictly
locally finite $k$\+linear categories $\sE$ considered in
the main results of the paper.

 One of the aims of the present paper is to serve as a background
reference source for the paper~\cite{Psa}.
 In the present paper we study coalgebras associated with $k$\+linear
categories, and particularly comodules and contramodules over such
coalgebras.
 In the paper~\cite{Psa}, \emph{semialgebras} arising from $k$\+linear
categories (and more generally from nonunital $k$\+algebras) are
constructed, and \emph{semimodules} and \emph{semicontramodules} over
such semialgebras are considered.
 In addition, \cite[Section~6]{Psa} offers a detailed discussion of
some examples which motivated both the present paper and~\cite{Psa}.

\subsection*{Acknowledgement}
 I~am grateful to Catharina Stroppel for a kind invitation to Bonn
and a stimulating discussion which prompted me to think about comodules
and contramodules over $k$\+linear categories.
 I~also wish to thank Michal Hrbek and Jan \v St\!'ov\'\i\v cek for
very helpful conversations.
 The author is supported by the GA\v CR project 23-05148S and
the Czech Academy of Sciences (RVO~67985840).

\Section{Preliminaries on Coalgebras, Comodules, and Contramodules}
\label{preliminaries-secn}

 All \emph{coalgebras} in this paper are coassociative and, unless
otherwise mentioned, counital.
 All coalgebra homomorphisms are presumed to agree with the counits,
and all co/contramodules are co/contraunital by default.
 The books~\cite{Swe,Mon} are classical reference sources on coalgebras.
 An introductory discussion can be found in the present author's
survey~\cite[Section~3]{Pksurv}.

 A (coassociative, counital) \emph{coalgebra} $\C$ over a field $k$ is
a $k$\+vector space endowed with $k$\+linear maps of
\emph{comultiplication} $\mu\:\C\rarrow\C\ot_k\C$ and \emph{counit}
$\epsilon\:\C\rarrow k$ satisfying the usual coassociativity and
counitality axioms.
 The latter are expressed by the diagrams
$$
 \C\rarrow\C\ot_k\C\rightrightarrows\C\ot_k\C\ot_k\C
$$
and
$$
 \C\rarrow\C\ot_k\C\rightrightarrows\C.
$$
 Here, on the former diagram, the two compositions of
the comultiplication map $\mu\:\C\rarrow\C\ot_k\C$ with the two maps
$\C\ot_k\C\rightrightarrows\C\ot_k\C\ot_k\C$ induced by
the comultiplication map must be equal to each other.
 On the latter diagram, both the compositions of the comultiplication
map with the two maps $\C\ot_k\C\rightrightarrows\C$ induced by
the counit map $\epsilon\:\C\rarrow k$ must be equal to the identity
map~$\id_\C$.

 A \emph{left comodule} $\M$ over $\C$ is a $k$\+vector space endowed
with a $k$\+linear map of \emph{left\/ $\C$\+coaction} $\nu\:\M\rarrow
\C\ot_k\M$ satisfying the coassociativity and counitality axioms.
 The latter are expressed by the diagrams
$$
 \M\rarrow\C\ot_k\M\rightrightarrows\C\ot_k\C\ot_k\M
$$
and
$$
 \M\rarrow\C\ot_k\M\rarrow\M.
$$
 Here, on the former diagram, the two compositions of the coaction
map $\nu\:\M\rarrow\C\ot_k\M$ with the two maps
$\M\ot_k\C\rightrightarrows\C\ot_k\C\ot_k\M$ induced by
the coaction and comultiplication maps~$\nu$ and~$\mu$ must be equal to
each other.
 On the latter diagram, the composition of the coaction map with
the map $\C\ot_k\M\rarrow\M$ induced by the counit map must be equal to
the identity map~$\id_\M$.
 A \emph{right comodule} $\N$ over $\C$ is a $k$\+vector space endowed
with a \emph{right coaction} map $\nu\:\N\rarrow\N\ot_k\C$ satisfying
the similar coassociativity and counitality axioms.

 The notion of a contramodule over a coalgebra goes back to Eilenberg
and Moore~\cite[Section~III.5]{EM}.
 For more recent references, see~\cite[Sections~0.2.4 or~3.1.1]{Psemi},
\cite[Sections~1.1\+-1.6]{Prev}, or~\cite[Section~8]{Pksurv}.

 A \emph{left contramodule} $\fP$ over $\C$ is a $k$\+vector space
endowed with a $k$\+linear map of \emph{left\/ $\C$\+contraaction}
$\pi\:\Hom_k(\C,\fP)\rarrow\fP$ satisfying
the \emph{contraassociativity} and \emph{contraunitality} axioms.
 The latter are expressed by the diagrams
\begin{equation} \label{contraassociativity}
 \Hom_k(\C,\Hom_k(\C,\fP))\simeq\Hom_k(\C\ot_k\C,\>\fP)
 \,\rightrightarrows\,\Hom_k(\C,\fP)\rarrow\fP
\end{equation}
and
\begin{equation} \label{contraunitality}
 \fP\rarrow\Hom_k(\C,\fP)\rarrow\fP.
\end{equation}

 Here, on the contraassociativity diagram~\eqref{contraassociativity},
the map $\Hom_k(\mu,\fP)\:\Hom_k(\C\ot_k\C,\>\fP)\rarrow\Hom_k(\C,\fP)$
is induced by the comultiplication map $\mu\:\C\rarrow\C\ot_k\C$, while
the map $\Hom_k(\C,\pi)\:\Hom_k(\C,\Hom_k(\C,\fP))\rarrow\Hom_k(\C,\fP)$
is induced by the contraaction map $\pi\:\Hom_k(\C,\fP)\rarrow\fP$.
 The identification $\Hom_k(\C,\Hom_k(\C,\fP))\simeq
\Hom_k(\C\ot_k\C,\>\fP)$ is obtained as a particular case of
the natural isomorphism $\Hom_k(V,\Hom_k(U,W))\simeq
\Hom_k(U\ot_kV,\>W)$, which holds for all $k$\+vector spaces $U$, $V$,
and~$W$.
 The two compositions of the resulting two parallel maps with
the contraaction map~$\pi$ must be equal to each other.

 On the contraunitality diagram~\eqref{contraunitality}, the composition
of the map $\Hom_k(\epsilon,\fP)\:\fP\rarrow\Hom_k(\C,\fP)$ induced by
the counit map $\epsilon\:\C\rarrow k$ with the contraaction map
$\pi\:\Hom_k(\C,\fP)\rarrow\fP$ must be equal to the identity
map~$\id_\fP$.
 The definition of a \emph{right contramodule} $\fQ$ over $\C$ is
similar, with the only difference that the identification
$\Hom_k(\C,\Hom_k(\C,\fQ))\simeq\Hom_k(\C\ot_k\C,\>\fQ)$ arising
as a particular case of the natural isomorphism $\Hom_k(V,\Hom_k(U,W))
\simeq\Hom_k(V\ot_kU,\>W)$ is used.

 For any right $\C$\+comodule $\N$ and any $k$\+vector space $V$,
the vector space $\fP=\Hom_k(\N,V)$ has a natural structure of
left $\C$\+contramodule.
 The contraaction map
\begin{multline*}
 \pi_\fP=\Hom_k(\nu_\N,V)\: \\
 \Hom_k(\C,\Hom_k(\N,V))\simeq\Hom_k(\N\ot_k\C,\>V)
 \lrarrow\Hom_k(\N,V)
\end{multline*}
is obtained by applying the contravariant functor $\Hom_k({-},V)$ to
the coaction map $\nu_\N\:\N\rarrow\N\ot_k\C$.

 The categories $\C\Comodl$ and $\Comodr\C$ of left and right
$\C$\+comodules are Grothendieck abelian categories; so they have
enough injective objects.
 A left $\C$\+comodule is injective if and only if it is a direct
summand of a \emph{cofree} left $\C$\+comodule, i.~e., a left
$\C$\+comodule of the form $\C\ot_k V$, where $V\in k\Vect$.
 For any left $\C$\+comodule $\LL$, the vector space of
$\C$\+comodule morphisms $\LL\rarrow\C\ot_kV$ is naturally isomorphic
to the vector space of $k$\+linear maps $\LL\rarrow V$.
{\hbadness=1250\par}

 The categories $\C\Contra$ and $\Contrar\C$ of left and right
$\C$\+contramodules are locally presentable abelian categories with
enough projective objects.
 A left $\C$\+contramodule is projective if and only if it is
a direct summand of a \emph{free} left $\C$\+contramodule, i.~e.,
a left $\C$\+contramodule of the form $\Hom_k(\C,V)$, where
$V\in k\Vect$.
 For any left $\C$\+contramodule $\fQ$, the vector space of
$\C$\+contramodule morphisms $\Hom_k(\C,V)\rarrow\fQ$ is naturally
isomorphic to the vector space of $k$\+linear maps $V\rarrow\fQ$.

 The forgetful functor $\C\Comodl\rarrow k\Vect$ is exact and
faithful.
 It preserves infinite coproducts, but \emph{not} infinite products
(except when $\C$ is finite-dimensional).
 The forgetful functor $\C\Contra\rarrow k\Vect$ is exact and
faithful as well.
 It preserves infinite products, but \emph{not} infinite coproducts.

 Occassionally we will consider coalgebras without counit, defined
in the obvious way (drop the counit map~$\epsilon$ and the counitality
axiom from the definitions above).
 The definitions of comodules and contramodules over a noncounital
coalgebra are obtained by dropping the co/contraunitality axiom in
the definitions of co/contramodules given above.

 A noncounital coalgebra $\D$ over a field~$k$ is said to be
\emph{conilpotent} if for every element $d\in D$ there exists
an integer $n\ge1$ such that the iterated comultiplication
map $\mu^{(n)}\:\D\rarrow\D^{\ot_k\,n+1}$ annihilates~$d$.
 The following result is a version of Nakayama lemma for comodules
and contramodules.

\begin{lem} \label{nakayama}
 Let\/ $\D$ be a conilpotent noncounital coalgebra.  Then \par
\textup{(a)} for any nonzero left\/ $\D$\+comodule\/ $\M$, the coaction
map\/ $\nu\:\M\rarrow\D\ot_k\M$ is \emph{not} injective; \par
\textup{(b)} for any nonzero left\/ $\D$\+contramodule\/ $\fP$,
the contraaction map\/ $\pi\:\Hom_k(\D,\fP)\allowbreak\rarrow\fP$ is
\emph{not} surjective.
\end{lem}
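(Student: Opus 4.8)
The plan is to treat \textup{(a)} and \textup{(b)} in parallel, using the iterated co/contra\+action together with the conilpotency filtration $F_m=\ker\bigl(\mu^{(m)}\:\D\rarrow\D^{\ot_k\,m+1}\bigr)$, where $F_0=0$ and $\bigcup_m F_m=\D$ by conilpotency. I will repeatedly use that, in Sweedler notation $\mu(d)=\sum d_{(1)}\ot d_{(2)}$, one has $d\in F_{m+1}\Rightarrow d_{(2)}\in F_m$ (apply $\id_\D\ot\mu^{(m)}$ to $\mu(d)$ and read off the second tensor factor), and the symmetric statement for $d_{(1)}$.

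For part~\textup{(a)}, I would argue by contradiction, assuming the coaction $\nu\:\M\rarrow\D\ot_k\M$ is injective. Iterating gives $\nu^{(n)}=(\mu^{(n-1)}\ot\id_\M)\circ\nu\:\M\rarrow\D^{\ot_k\,n}\ot_k\M$, and since over a field tensoring an injective map with a vector space and composing injective maps both preserve injectivity, every $\nu^{(n)}$ is injective. Now pick any nonzero $m\in\M$ and write $\nu(m)=\sum_{i=1}^r d_i\ot m_i$ as a finite sum. By conilpotency there is, for each~$i$, an integer $N_i$ with $\mu^{(N_i)}(d_i)=0$; since $\mu^{(N)}(d)=0$ forces $\mu^{(N+1)}(d)=0$, taking $N=\max_i N_i$ gives $\mu^{(N)}(d_i)=0$ for all~$i$. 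Then $\nu^{(N+1)}(m)=\sum_i\mu^{(N)}(d_i)\ot m_i=0$, contradicting injectivity of $\nu^{(N+1)}$ and $m\neq0$. Hence $\nu$ is not injective.

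For part~\textup{(b)}, I would again argue by contradiction, assuming the contraaction $\pi\:\Hom_k(\D,\fP)\rarrow\fP$ is surjective and aiming to conclude $\fP=0$. Working over a field, I would choose a $k$\+linear splitting $s\:\fP\rarrow\Hom_k(\D,\fP)$ with $\pi\circ s=\id_\fP$, and define a $k$\+linear operator $T\:\Hom_k(\D,\fP)\rarrow\Hom_k(\D,\fP)$ by $T(f)(d)=\sum s(f(d_{(2)}))(d_{(1)})$; equivalently $T(f)=F\circ\mu$ for $F(u\ot v)=s(f(v))(u)$ under the identification $\Hom_k(\D,\Hom_k(\D,\fP))\simeq\Hom_k(\D\ot_k\D,\>\fP)$. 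The contraassociativity axiom~\eqref{contraassociativity}, applied to this~$F$ and using $\pi\circ s=\id_\fP$, then yields $\pi\circ T=\pi$. The filtration property above supplies the crucial contraction estimate: if $f|_{F_m}=0$ then $T(f)|_{F_{m+1}}=0$, since every $d_{(2)}$ coming from $d\in F_{m+1}$ lies in $F_m$ and so $s(f(d_{(2)}))=s(0)=0$.

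Finally, the completeness $\Hom_k(\D,\fP)=\varprojlim_m\Hom_k(F_m,\fP)$ lets me run a Neumann series. Starting from $h|_{F_0}=0$, the contraction estimate gives $T^n(h)|_{F_n}=0$, so the partial sums of $\sum_{n\ge0}T^n(h)$ stabilize on each $F_m$ and converge; their limit is a right inverse to $\id-T$, whence $\id-T$ is surjective. Since $\pi\circ(\id-T)=\pi-\pi\circ T=0$, surjectivity of $\id-T$ forces $\pi=0$, so $\im\pi=0$; as $\pi$ was assumed surjective, $\fP=0$. The main obstacle is precisely this last step. The tempting direct argument --- rewriting a fixed $p\in\fP$ as $\pi$ of maps vanishing on larger and larger $F_m$ --- only produces a sequence of preimages converging pointwise to~$0$ whose $\pi$\+values stay equal to~$p$, which is \emph{not} a contradiction, because $\pi$ need not be continuous and contramodules over $\D$ need not be separated. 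Replacing this doomed limiting argument by the invertibility of $\id-T$ is what makes conilpotency do the work that nilpotency and separatedness do in the classical Nakayama lemma.
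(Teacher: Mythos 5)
Your proof is correct, and it is worth noting up front that it is necessarily a different route from the paper's, because the paper does not actually prove Lemma~\ref{nakayama}: it cites \cite{Phff} for part~(a) and \cite{Psemi} for part~(b) (with pointers to \cite{Pksurv,Prev} for generalizations). Your part~(a) is the standard short argument. The one point to state explicitly is that injectivity of the iterated coaction does \emph{not} follow from the displayed formula $\nu^{(n)}=(\mu^{(n-1)}\ot\id_\M)\circ\nu$, since $\mu^{(n-1)}$ is very far from injective; it follows from the equivalent recursive form $\nu^{(n)}=(\id_{\D^{\ot n-1}}\ot\nu)\circ\nu^{(n-1)}$, the two expressions agreeing by comodule coassociativity --- your argument genuinely uses both forms, one for injectivity and one for the vanishing. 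Your part~(b) is a correct self-contained proof of the hard half, and all three pillars check out: the identity $\pi\circ T=\pi$ is exactly contraassociativity~\eqref{contraassociativity} applied to $\tilde F(u\ot v)=s(f(v))(u)$, read with the left-contramodule convention for the identification $\Hom_k(\D,\Hom_k(\D,\fP))\simeq\Hom_k(\D\ot_k\D,\fP)$; the contraction estimate holds because $\mu(F_{m+1})\subset\D\ot_k F_m$ (over a field, $\ker(\id_\D\ot\mu^{(m)})=\D\ot_k\ker\mu^{(m)}$); and the series converges in $\Hom_k(\D,\fP)=\varprojlim_m\Hom_k(F_m,\fP)$ since $T^n(h)$ vanishes on $F_n$. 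One small step deserves a sentence: to verify $(\id-T)S(h)=h$ for the limit $S(h)$ of the partial sums, you need that $T(f)|_{F_{m+1}}$ depends only on $f|_{F_m}$, which follows from the same computation as the contraction estimate. Finally, your closing diagnosis is exactly the crux: since a $\D$\+contramodule need not be separated and $\pi$ need not be continuous, exhibiting preimages of a fixed $p$ vanishing on larger and larger $F_m$ proves nothing; performing the infinite summation \emph{inside} the free contramodule $\Hom_k(\D,\fP)$ and applying $\pi$ once --- i.e., deducing $\pi=0$ from $\pi\circ(\id-T)=0$ and surjectivity of $\id-T$ --- is what converts conilpotency into the conclusion. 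This is the same underlying mechanism (section plus contraassociativity plus convergence along the conilpotency filtration) that drives the proof in the cited source, but your Neumann-series packaging is clean, complete, and would serve as a legitimate replacement for the citation.
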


\begin{proof}
 Part~(a) can be found in~\cite[Lemma~2.1(a)]{Phff}.
 For part~(b), which is more complicated, see~\cite[Lemma~A.2.1]{Psemi};
and for a discussion of generalizations, \cite[Lemma~8.1]{Pksurv}
and~\cite[Lemmas~2.1 and~3.22]{Prev}.
\end{proof}

 The next lemma concerns infinite families of coalgebras.
 As such, it has no direct analogue for abstract algebras or rings
(but only for \emph{topological} rings with their topological products).

\begin{lem} \label{direct-sum-of-coalgebras}
 Let\/ $(\C_\xi)_{\xi\in\Xi}$ be a family of coalgebras and\/
$\C=\bigoplus_{\xi\in\Xi}\C_\xi$ be their direct sum, endowed with
its natural coalgebra structure. \par
\textup{(a)} The category of left\/ $\C$\+comodules is equivalent to
the Cartesian product of the categories of left\/ $\C_\xi$\+comodules
over the indices\/ $\xi\in\Xi$,
$$
 \C\Comodl\,\simeq\,\bigtimes\nolimits_{\xi\in\Xi}\,\C_\xi\Comodl.
$$
 Any left\/ $\C$\+comodule\/ $\M$ decomposes naturally into a direct
sum of left\/ $\C_\xi$\+comodules\/ $\M_\xi$ over\/ $\xi\in\Xi$,
$$
 \M\simeq\bigoplus\nolimits_{\xi\in\Xi}\M_\xi.
$$ \par
\textup{(b)} The category of left\/ $\C$\+contramodules is equivalent
to the Cartesian product of the categories of left\/
$\C_\xi$\+contramodules over the indices\/ $\xi\in\Xi$,
$$
 \C\Contra\,\simeq\,\bigtimes\nolimits_{\xi\in\Xi}\,\C_\xi\Contra.
$$
 Any left\/ $\C$\+contramodule\/ $\fP$ decomposes naturally into
a direct product of left\/ $\C_\xi$\+con\-tra\-mod\-ules\/ $\fP^\xi$
over\/ $\xi\in\Xi$,
$$
 \fP\simeq\prod\nolimits_{\xi\in\Xi}\fP^\xi.
$$
\end{lem}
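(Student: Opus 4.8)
Let me sketch how I would prove this statement about direct sums of coalgebras.

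First, I need to understand the natural coalgebra structure on $\C = \bigoplus_\xi \C_\xi$. The comultiplication should be built from the comultiplications of the summands. The key point: $\C \ot_k \C = \bigoplus_{\xi,\eta} \C_\xi \ot_k \C_\eta$, and the comultiplication $\mu$ on $\C$ lands only in the "diagonal" summands $\bigoplus_\xi \C_\xi \ot_k \C_\xi$, because each $\mu_\xi: \C_\xi \to \C_\xi \ot_k \C_\xi$ maps into its own tensor square. Similarly the counit is $\epsilon = \sum_\xi \epsilon_\xi$.

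Let me think about part (a) first.

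For part (a), given a left $\C$-comodule $\M$ with coaction $\nu: \M \to \C \ot_k \M = \bigoplus_\xi \C_\xi \ot_k \M$, I would define $\M_\xi$ to be the image of the composite projection $\M \xrightarrow{\nu} \bigoplus_\xi \C_\xi \ot_k \M \to \C_\xi \ot_k \M$, or more precisely use the counit to extract summands. Actually, the cleaner approach: let $e_\xi: \C \to \C$ be the idempotent projecting onto $\C_\xi$ (a coalgebra is not quite right, but $\C_\xi$ is a subcoalgebra). The coaction composed with $\epsilon_\eta \ot \id_\M$ for each $\eta$ gives projection operators $p_\eta: \M \to \M$ via $p_\eta = (\epsilon_\eta \ot \id_\M) \circ \nu$. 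The counitality axiom gives $\sum_\eta p_\eta = \id_\M$ (since $\sum_\eta \epsilon_\eta = \epsilon$ and counitality says $(\epsilon \ot \id)\nu = \id$). Coassociativity should force the $p_\eta$ to be orthogonal idempotents. Setting $\M_\xi = p_\xi \M$, I claim $\M = \bigoplus_\xi \M_\xi$ and each $\M_\xi$ is naturally a $\C_\xi$-comodule with coaction the restriction of $\nu$. I would verify that $\nu(\M_\xi) \subseteq \C_\xi \ot_k \M_\xi$ using coassociativity. The direct sum (not product) is forced because $\nu(m)$ is a single element of $\bigoplus_\xi \C_\xi \ot_k \M$, hence has only finitely many nonzero $\C_\xi$-components, so each $m$ lies in a finite partial sum of the $\M_\xi$. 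Conversely, given comodules $\M_\xi$, their direct sum is a $\C$-comodule; these constructions are mutually inverse and functorial, establishing the equivalence of categories.

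For part (b), the situation is dual, and the appearance of a product rather than a sum is the subtle point. Here $\Hom_k(\C, \fP) = \Hom_k(\bigoplus_\xi \C_\xi,\ \fP) = \prod_\xi \Hom_k(\C_\xi, \fP)$, so the contraaction $\pi: \prod_\xi \Hom_k(\C_\xi,\fP) \to \fP$ already involves a product. I would define idempotents $p^\xi: \fP \to \fP$ dually: precompose the counit-induced inclusion and project, i.e. $p^\xi = \pi \circ (\text{restriction to the } \xi\text{-factor of } \epsilon_\xi)$. Contraunitality gives that these "sum" to the identity in the appropriate completed sense, and contraassociativity gives orthogonality. Setting $\fP^\xi = p^\xi \fP$, the claim is $\fP \simeq \prod_\xi \fP^\xi$.

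The main obstacle will be part (b), specifically establishing that $\fP \simeq \prod_\xi \fP^\xi$ rather than a direct sum, and handling the idempotents correctly. Unlike the comodule case, there is no finiteness forcing only finitely many $p^\xi m$ to be nonzero, and one cannot naively decompose an element of $\fP$ as a sum of its components. I would instead work at the level of the map $\pi$ and the hom-space $\Hom_k(\C,\fP) = \prod_\xi \Hom_k(\C_\xi,\fP)$, showing that $\pi$ decomposes compatibly and that contramodule structures glue to a product. The right framework is to verify directly that both functors (the functor $\fP \mapsto (\fP^\xi)_\xi$ and the functor $(\fQ^\xi)_\xi \mapsto \prod_\xi \fQ^\xi$) are well-defined and mutually inverse; the care is entirely in checking that the contraaction on the product $\prod_\xi \fQ^\xi$, assembled from the individual contraactions using $\Hom_k(\C,\prod_\xi \fQ^\xi) = \prod_\xi \Hom_k(\C,\fQ^\xi) = \prod_\xi \prod_\eta \Hom_k(\C_\eta, \fQ^\xi)$ and discarding off-diagonal terms via the coalgebra structure, satisfies contraassociativity and contraunitality. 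This is a routine but notationally heavy diagram chase once the product-versus-sum bookkeeping is set up correctly.
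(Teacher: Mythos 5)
Your part~(a) is correct and is essentially the paper's argument in different clothing: the paper recovers $\M_\xi$ as the full preimage of the direct summand $\C_\xi\ot_k\M\subset\C\ot_k\M$ under the coaction map, which gives the same subspaces as your idempotents $p_\xi=(\epsilon_\xi\ot\id_\M)\circ\nu$, and your observation that finiteness of tensors forces the direct sum decomposition is exactly the right point.

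Part~(b), however, has a genuine gap, and moreover you mislocate the difficulty. Verifying that $\prod_\xi\fQ^\xi$, with the contraaction assembled from the individual contraactions, satisfies contraassociativity and contraunitality is the routine half; you declare that ``the care is entirely'' there. The crux is the other half: that for an \emph{arbitrary} left $\C$\+contramodule $\fP$ the natural comparison map $\fP\rarrow\prod_\xi\fP^\xi$, \ $p\longmapsto(e^\xi p)_\xi$, where $e^\xi$ are your idempotents and $\fP^\xi=e^\xi\fP$, is bijective --- injectivity being the delicate point. The tools you list do not yield it: contraunitality says that $\pi$ applied to the single element $(\epsilon_\xi^*(p))_{\xi}\in\prod_\xi\Hom_k(\C_\xi,\fP)$ returns~$p$, but $\pi$ is merely a linear map, so one cannot interchange it with the infinite family of components; orthogonality of the $e^\xi$ says nothing about elements killed by \emph{all} of them. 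The actual argument requires applying contraassociativity to \emph{locally finite infinite families} inside $\Hom_k(\C\ot_k\C,\fP)$. For instance, if $e^\xi p=0$ for all~$\xi$, consider $H\in\Hom_k(\C\ot_k\C,\fP)$ defined by $H(c\ot c')=\sum_\xi\epsilon_\xi(c)\epsilon_\xi(c')\,p$ (a well-defined map, since each $c\in\C$ has only finitely many nonzero components); one computes that $\Hom_k(\mu,\fP)(H)$ is the image of~$p$ under the map induced by the counit, while $\Hom_k(\C,\pi)(H)=0$ because each $e^\xi p=0$, so contraassociativity plus contraunitality force $p=0$. A similar manipulation is needed to show that every family $(p_\xi)_\xi\in\prod_\xi\fP^\xi$ is hit. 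Without this step (or an equivalent one) the claim that your two functors are mutually inverse is unproven. For what it is worth, the paper does not prove part~(b) inline either: it cites \cite[Lemma~A.2.2]{Psemi} (cf.\ \cite[Lemma~8.1(b)]{Pproperf}, via the identification of $\C$\+contramodules with contramodules over the topological ring $\C^*=\prod_\xi\C_\xi^*$), which is where the above argument is carried out; writing blind, you would need to supply it yourself.
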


\begin{proof}
 Part~(a) is easy: the subcomodule $\M_\xi\subset\M$ is recovered as
the full preimage of the direct summand $\C_\xi\ot_k\M\subset
\C\ot_k\M$ under the coaction map $\M\rarrow\C\ot_k\M$.
 Alternatively, part~(a) can be obtained as a particular case
of~\cite[Lemma~8.1(a)]{Pproperf}.
 Here the applicability of topological algebra language to comodules
over coalgebras over fields is established in~\cite[Section~2.1]{Swe}
(see the discussion in the beginning of
Section~\ref{change-of-scalars-subsecn} below,
cf.\ Remark~\ref{long-remark}).
 Part~(b) is~\cite[Lemma~A.2.2]{Psemi};
cf.~\cite[Lemma~8.1(b)]{Pproperf}, which is applicable in view
of~\cite[Section~2.3]{Prev}.
\end{proof}

\subsection{Change of scalars} \label{change-of-scalars-subsecn}
 We refer to~\cite[Section~3.1]{Pksurv} for a basic introductory
discussion of coalgebra homomorphisms, subcoalgebras and coideals.

 The content of this section can be viewed as a particular case
of~\cite[Section~2.9]{Pproperf}, but then the applicability of
the topological algebra exposition in~\cite{Pproperf} to comodules
and contramodules over coalgebras over a field needs to be
established beforehand.
 One needs to know that a left $\C$\+comodule is the same thing as
a discrete left module over the topological ring $\C^*$, and a left
$\C$\+contramodule is the same thing as a left contramodule over
the topological ring $\C^*$ with its natural
pro-finite-dimensional/linearly compact topology.
 In this connection, the relevant references
are~\cite[Section~2.1]{Swe} for comodules
and~\cite[Section~2.3]{Prev} for contramodules.
 The exposition below avoids any references to topological algebra.

 We start with some basic observations about ring homomorphisms,
stated in a slightly unusual notation for the purposes of
subsequent dualization to coalgebras.
 Let $f\:C\rarrow B$ be a ring homomorphism.
 Then, via the restriction of scalars, every left $B$\+module
acquires a left $C$\+module structure, and every right $B$\+module
acquires a right $C$\+module structure.
 So we have the exact, faithful \emph{restriction-of-scalars}
functor $f_*\:B\Modl\rarrow C\Modl$.
 This construction applies both to the unital and nonunital ring
homomorphisms.

 The functor~$f_*$ has adjoints on both sides.
 The left adjoint functor $f^*\:C\Modl\rarrow B\Modl$, defined by
the rule $f^*(N)=B\ot_CN$, is called the \emph{extension of scalars}.
 The right adjoint functor $f^!\:C\Modl\rarrow B\Modl$, given
by the rule $f^!(N)=\Hom_C(B,N)$, is called the \emph{coextension
of scalars}.
 These formulas presume $f$~to be a unital ring homomorphism.

 Now let us assume that the ring homomorphism $f\:C\rarrow B$ is
surjective with the kernel ideal $D\subset C$.
 Then the functor of restriction of scalars $f_*\:B\Modl\rarrow
C\Modl$ is exact and fully faithful.
 Furthermore, the ideal $D\subset C$ can be viewed as a nonunital
ring; then the inclusion map $D\rarrow C$ is a homomorphism of
nonunital rings.
 The restriction of scalars with respect to the inclusion map
$D\rarrow C$ endows every $C$\+module with a $D$\+module structure.

 For a surjective ring homomorphism $f\:C\rarrow B$,
the functors of (co)extension of scalars $f^*$ and~$f^!$ can be
computed as follows.
 For any left $C$\+module $N$, one has $f^*(N)=N/DN$.
 Here $N/DN$ is the unique maximal quotient $C$\+module of $N$
whose $C$\+module structure arises from a $B$\+module structure
via the restriction of scalars.
 For any left $C$\+module $N$, denote by ${}_DN\subset N$
the submodule of all elements annihilated by $D$ in~$M$.
 Then one has $f^!(N)={}_DN$.
 The submodule ${}_DN\subset N$ is the unique maximal $C$\+submodule
of $M$ whose $C$\+module structure arises from a $B$\+module
structure via the restriction of scalars.

 Passing from rings to coalgebras, the above picture of a functor
with adjoints on both sides splits in two halves.
 Let $g\:\B\rarrow\C$ be a homomorphism of coalgebras over
a field~$k$.
 Then any left $\B$\+comodule $\M$ acquires a natural structure of
left $\C$\+comodule, provided by the left $\C$\+coaction map
$$
 \M\lrarrow\B\ot_k\M\lrarrow\C\ot_k\M
$$
constructed as the composition of the left $\B$\+coaction map
with the map induced by the coalgebra homomorphism~$g$.
 Hence the exact, faithful functor of \emph{corestriction
of scalars} $g_\diamond\:\B\Comodl\rarrow\C\Comodl$.

 Dual-analogously, any left $\B$\+contramodule $\fP$ acquires
a natural structure of left $\C$\+contramodule, provided by
the left $\C$\+contraaction map
$$
 \Hom_k(\C,\fP)\lrarrow\Hom_k(\B,\fP)\lrarrow\fP
$$
constructed as the composition of the map induced by
the coalgebra homomorphism~$g$ with the left $\B$\+contraaction map.
 Hence the exact, faithful functor of \emph{contrarestriction
of scalars} $g_\sharp\:\B\Contra\rarrow\C\Contra$.
 These constructions of the functors $g_\diamond$ and~$g_\sharp$
apply both to the counital and noncounital coalgebra homomorphisms.

 The functor of corestriction of scalars~$g_\diamond$ has a right
adjoint functor $g^\diamond\:\C\Comodl\allowbreak\rarrow\B\Comodl$,
called the \emph{coextension of scalars}.
 Given a left $\C$\+comodule $\N$, the left $\B$\+comodule
$g^\diamond(\N)$ can be computed explicitly as the cotensor product
$g^\diamond(\N)=\B\oc_\C\N$ (see~\cite[Sections~2.1 and~4.8]{Pkoszul}
or~\cite[Sections~0.2.1, 1.2.1, and~7.1.2]{Psemi} for some details).

 Dual-analogously, the functor of contrarestriction of
scalars~$g_\sharp$ has a left adjoint functor $g^\sharp\:\C\Contra
\rarrow\B\Contra$, called the \emph{contraextension of scalars}.
 Given a left $\C$\+contramodule $\fQ$, the left
$\B$\+contramodlue $g^\sharp(\fP)$ can be computed explicitly as
$g^\sharp(\fQ)=\Cohom_\C(\B,\fQ)$
(see~\cite[Sections~2.2 and~4.8]{Pkoszul}
or~\cite[Sections~0.2.4, 3.2.1, and~7.1.2]{Psemi} for some details).
 These formulas for the functors $g^\diamond$ and~$g^\sharp$ presume
$g$~to be a counital coalgebra homomorphism.

 Now let us assume that the coalgebra homomorphism $g\:\B\rarrow\C$
is injective; so $\B$ can be viewed as a subcoalgebra in~$\C$.
 Then the functors of corestriction and contrarestriction of scalars
$g_\diamond\:\B\Comodl\rarrow\C\Comodl$ and
$g_\sharp\:\B\Contra\rarrow\C\Contra$ are exact and fully faithful.
 Furthermore, the quotient vector space $\D=\C/g(\B)$ acquires
a natural structure of noncounital coalgebra over~$k$; this coalgebra
structure is uniquely defined by the condition that the natural
surjective map $\C\rarrow\D$ must be a homomorphism of noncounital
coalgebras.
 The corestriction and contrarestriction of scalars with respect to
the noncounital coalgebra homomorphism $\C\rarrow\D$ endow every
$\C$\+comodule with a $\D$\+comodule structure, and every
$\C$\+contramodule with a $\D$\+contramodule structure.

 For an injective homomorphism of coalgebras $g\:\B\rarrow\C$,
the functors of coextension and contraextension of scalars
$g^\diamond$ and~$g^\sharp$ can be computed as follows.
 For any left $\C$\+comodule $\N$, denote by ${}_\D\N\subset\N$
the kernel of the left $\D$\+coaction map $\upsilon_\N\:\N
\rarrow\D\ot_k\N$.
 Then one has $g^\diamond(\N)={}_\D\N$.
 Here the vector subspace ${}_\D\N\subset\N$ is the unique maximal
$\C$\+subcomodule of $\N$ whose $\C$\+comodule structure arises
from a $\B$\+comodule structure via the corestriction of scalars.

 For any left $\C$\+contramodule $\fQ$, consider the left
$\D$\+contraaction map $\rho_\fQ\:
\Hom_k(\D,\allowbreak\fQ)\rarrow\fQ$.
 Then one has $g^\sharp(\fP)=\coker(\rho_\fQ)$.
 Here $\coker(\rho_\fQ)=\fQ/\im(\rho_\fQ)$ is the unique maximal
quotient $\C$\+contramodule of $\fQ$ whose $\C$\+contramodule
structure arises from a $\B$\+contramodule structure via
the contrarestriction of scalars.

\Section{Locally Finite Categories, Coalgebras, and Comodules}

 Let $k$~be a field.
 A \emph{$k$\+linear category} $\sE$ is a category enriched in
$k$\+vector spaces.
 So, for any two objects $x$, $y\in\sE$, the $k$\+vector space of
morphisms $\sE_{y,x}=\Hom_\sE(x,y)$ is given, together with
the multiplication/composition maps
$$
 \sE_{z,y}\ot_k\sE_{y,x}\rarrow\sE_{z,x},
 \text{ \ or equivalently, \ }
 \Hom_\sE(y,z)\ot_k\Hom_\sE(x,y)\rarrow\Hom_\sE(x,z)
$$
for all $x$, $y$, $z\in\sE$, and unit/identity elements
$\id_x\in\sE(x,x)$ for all $x\in\sE$.
 The composition maps must be $k$\+linear, and the usual associativity
and unitality axioms are imposed.
 For simplicity of notation, let us assume that there are no zero
objects in~$\sE$: so $\id_x\ne0$ for every $x\in\sE$.

 A \emph{left\/ $\sE$\+module} $M$ is a covariant $k$\+linear functor
$\sE\rarrow k\Vect$.
 So, for every object $x\in\sE$, a $k$\+vector space $M_x=M(x)$ is
given, together with the action maps
$$
 \sE_{y,x}\ot_k M_x\lrarrow M_y,
 \quad\text{or equivalently,}\quad
 \Hom_\sE(x,y)\ot_k M(x)\lrarrow M(y)
$$
for all $x$, $y\in\sE$.
 The action maps must be $k$\+linear, and the usual associativity
and unitality axioms must be satisfied.

 Similarly, a \emph{right\/ $\sE$\+module} $N$ is a contravariant
$k$\+linear functor $\sE^\sop\rarrow k\Vect$.
 So, for every object $x\in\sE$, a $k$\+vector space $N_x=N(x)$ is
given, together with the action maps
$$
 N_y\ot_k\sE_{y,x}\lrarrow N_x,
 \quad\text{or equivalently,}\quad
 \Hom_\sE(x,y)\ot_k N(y)\lrarrow N(x)
$$
for all $x$, $y\in\sE$.

\begin{defn} \label{morphism-preorder}
 Given a small $k$\+linear category $\sE$, we define a preorder
relation on the set of all objects of $\sE$ by the rule
\begin{itemize}
\item $x\preceq y$ if and only if there exists an integer $n\ge0$ and
objects $x=z_0$, $z_1$,~\dots, $z_{n-1}$, $z_n=y$ in $\sE$ such that
$\Hom_\sE(z_{i-1},z_i)\ne0$ for all $1\le i\le n$.
\end{itemize}
 In other words, we write $x\preceq y$ if there exists a finite chain
of composable nonzero morphisms in $\sE$ going from~$x$ to~$y$.

 Furthermore, let us write $x\prec y$ whenever $x\preceq y$
and $y\not\preceq x$.
 Let us also write $x\sim y$ whenever $x\preceq y$ and $y\preceq x$.
 
 The following piece of terminology is convenient.
 Let us say that a morphism $f\in\Hom_\sE(x,y)$ is \emph{short} if
either $f=0$, or $x\sim y$; and $f$~is \emph{long} if either $f=0$,
or $x\prec y$.
 Then short morphisms form a $k$\+linear subcategory in $\sE$, while
long morphisms form a two-sided ideal of morphisms in~$\sE$.
 Any nonzero morphism in $\sE$ is either short or long, but not both
(while the zero morphisms are both short and long).
\end{defn}

\begin{defn} \label{locally-finite-category}
 We will say that a small $k$\+linear category $\sE$ is \emph{locally
finite} if two conditions are satisfied:
\begin{enumerate}
\renewcommand{\theenumi}{\roman{enumi}}
\item finite-dimensional vector spaces of morphisms: for any two
objects $x$ and $y\in\sE$, the $k$\+vector space $\Hom_\sE(x,y)$ is
finite-dimensional;
\item interval finiteness of the preorder on objects: for any
two objects $x$ and $y\in\sE$, the set of objects $\{z\in\sE\mid
x\preceq z\preceq y\}$ is finite.
\end{enumerate}
\end{defn}

 Notice that our definition of a locally finite $k$\+linear category
is left-right symmetric: the passage to the opposite $k$\+linear
category $\sE^\sop$ with $\Hom_{\sE^\sop}(y,x)=\Hom_\sE(x,y)$ does not
affect it.
 So the results about left $\sE$\+modules in this section will be
equally applicable to right $\sE$\+modules.

\begin{constr} \label{coalgebra-construction}
 To every locally finite $k$\+linear category $\sE$, we assign
the related coalgebra $\C_\sE$ over~$k$ by the following rules.
 As a $k$\+vector space, we put
$$
 \C_\sE=\bigoplus\nolimits_{x,y\in\sE}\Hom_\sE(x,y)^*,
$$
where $W^*=\Hom_k(W,k)$ is the notation for the dual vector space
to a $k$\+vector space~$W$.
 Put $\C_\sE^{x,y}=\Hom_\sE(x,y)^*=\sE_{y,x}^*$; so
$\C_\sE=\bigoplus_{x,y\in\sE}\C_\sE^{x,y}$.
 The counit map $\epsilon\:\C_\sE\rarrow k$ is defined by the rules
that the restriction of~$\epsilon$ to $\C_\sE^{x,y}$ vanishes
for $x\ne y$, while the the restriction of~$\epsilon$ to $\C_\sE^{x,x}$
is the map
$$
 \epsilon_x\:\C_\sE^{x,x}=\Hom_\sE(x,x)^*\lrarrow k
$$
provided by the pairing with the identity morphism
$\id_x\in\Hom_\sE(x,x)$.
 Finally, the comultiplication map $\mu\:\C_\sE\rarrow\C_\sE\ot_k\C_\sE$
is defined by the rule that the restriction of~$\mu$ to $\C_\sE^{x,y}$
is the map
$$
 \mu_{x,y}\:\C_\sE^{x,y}\lrarrow
 \bigoplus\nolimits_{x\preceq z\preceq y}
 \C_\sE^{x,z}\ot_k\C_\sE^{z,y}
$$
whose components $\C_\sE^{x,y}\rarrow\C_\sE^{x,z}\ot_k\C_\sE^{z,y}$
are the dual maps to the composition/multiplication maps
$\sE_{y,z}\ot_k\sE_{z,x}\rarrow\sE_{y,x}$.

 Furthermore, consider the cosemisimple coalgebra
$\C^\id_\sE=\bigoplus_{x\in\sE}k$, with the coalgebra structure
of the direct sum of the one-dimensional coalgebras~$k$ over~$k$
(taken over all the objects $x\in\sE$).
 The rule $\sE\longmapsto\C_\sE$ assigns the coalgebra $\C^\id_\sE$
to the discrete $k$\+linear category $\sE^\id$ with the same set of
objects as $\sE$ (i.~e., the subcategory $\sE^\id\subset\sE$ whose
morphisms are all the scalar multiples of the identity morphisms
in~$\sE$).
 Then the inclusion of $k$\+linear categories $\sE^\id\rarrow\sE$
induces a surjective morphism of coalgebras $\C_\sE\rarrow\C^\id_\sE$.
 The latter map vanishes on the components $\C_\sE^{x,y}\subset\C_\sE$
with $x\ne y$, and is given by the maps~$\epsilon_x$ on the
components $\C_\sE^{x,x}\subset\C_\sE$.
\end{constr}

\begin{constr} \label{from-comodule-construction}
 Let $\sE$ be a locally finite $k$\+linear category.
 Then the comodule inclusion functor
$$
 \Upsilon_\sE\:\C_\sE\Comodl\lrarrow\sE\Modl
$$
is constructed as follows.
 Suppose given a left $\C_\sE$\+comodule~$\M$.
 Then the corestriction of scalars with respect to the morphism of
coalgebras $\C_\sE\rarrow\C_\sE^\id$ (see
Section~\ref{change-of-scalars-subsecn}) defines
a left $\C_\sE^\id$\+comodule structure on~$\M$.
 By Lemma~\ref{direct-sum-of-coalgebras}(a), the latter structure
is equivalent to the datum of a direct sum decomposition
$\M=\bigoplus_{x\in\sE}\M_x$.
 The $k$\+vector space $\M_x$ is the image of the idempotent linear
operator $\epsilon_x\:\M\rarrow\M$ provided by the action in $\M$
of the idempotent linear function $\C_\sE\twoheadrightarrow\C_\sE^{x,x}
\overset{\epsilon_x}\rarrow k$ on~$\C_\sE$.

 With respect to this direct sum decomposition of $\M$ and the direct
sum decomposition of $\C_\sE$ described in
Construction~\ref{coalgebra-construction}, the coaction map
$$
 \nu\:\M\lrarrow\C_\sE\ot_k\M
$$
has components
$$
 \nu_{x,y}\:\M_x\lrarrow\C_\sE^{x,y}\ot_k\M_y,
 \qquad y\in\sE,
$$
while all the other components vanish.
 More precisely, the map~$\nu$ is the direct sum of maps
\begin{equation} \label{nu-x-formula}
 \nu_x\:\M_x\lrarrow\bigoplus\nolimits_{y\in\sE}\C_\sE^{x,y}\ot_k\M_y,
\end{equation}
and the maps~$\nu_{x,y}$ are the compositions
$\M_x\overset{\nu_x}\rarrow\bigoplus\nolimits_{z\in\sE}
\C_\sE^{x,z}\ot_k\M_z\twoheadrightarrow\C_\sE^{x,y}\ot_k\M_y$.

 Now we put $M(x)=\M_x$ for all objects $x\in\sE$, and define
the action maps
$$
 \Hom_\sE(x,y)\ot_k M(x)\lrarrow M(y)
$$
as the $k$\+linear maps corresponding to~$\nu_{x,y}$ in view of
the isomorphisms $\Hom_\sE(x,y)\simeq(\C_\sE^{x,y})^*$ for
finite-dimensional $k$\+vector spaces $\C_\sE^{x,y}$.
 To finish the construction, it remains to set $\Upsilon_\sE(\M)=M$.
\end{constr}

\begin{prop} \label{comodule-fully-faithful}
 For any locally finite $k$\+linear category\/ $\sE$, the comodule
inclusion functor\/ $\Upsilon_\sE\:\C_\sE\Comodl\rarrow\sE\Modl$
is exact and fully faithful.
\end{prop}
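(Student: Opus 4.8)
The plan is to dispose of exactness quickly and then to extract full-and-faithfulness from the component-wise description of the coaction given in Construction~\ref{from-comodule-construction}, with the finite-dimensionality of the Hom spaces playing the decisive role. For exactness, recall that the forgetful functor $\C_\sE\Comodl\rarrow k\Vect$ is exact, and that the corestriction-of-scalars functor along the coalgebra morphism $\C_\sE\rarrow\C_\sE^\id$ is exact as well. Composing with the equivalence of Lemma~\ref{direct-sum-of-coalgebras}(a), a short exact sequence $0\rarrow\M'\rarrow\M\rarrow\M''\rarrow0$ of left $\C_\sE$\+comodules yields, for each $x\in\sE$, a short exact sequence $0\rarrow\M'_x\rarrow\M_x\rarrow\M''_x\rarrow0$ of $k$\+vector spaces, since $\M_x=\epsilon_x\M$ is the direct summand cut out by the idempotent~$\epsilon_x$. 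As exactness of a sequence of left $\sE$\+modules is tested objectwise, i.e.\ on each $M(x)=\M_x$, the functor $\Upsilon_\sE$ is exact.

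For faithfulness and fullness, the first observation is that any morphism $\Psi\:\M\rarrow\M'$ of left $\C_\sE$\+comodules is automatically a morphism of the underlying $\C_\sE^\id$\+comodules, and hence respects the decompositions: $\Psi(\M_x)\subseteq\M'_x$, so that $\Psi=\bigoplus_{x\in\sE}\Psi_x$ with $\Psi_x\:\M_x\rarrow\M'_x$. Thus a comodule morphism is determined by the family $(\Psi_x)_{x\in\sE}$, which is precisely the datum of $\Upsilon_\sE(\Psi)$; this already gives faithfulness. To analyze the coaction compatibility, I would decompose both coactions into their components $\nu_{x,y}\:\M_x\rarrow\C_\sE^{x,y}\ot_k\M_y$ as in Construction~\ref{from-comodule-construction}. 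Since $\C_\sE\ot_k\M=\bigoplus_{x,y}\C_\sE^{x,y}\ot_k\M_y$ and the operator $\id_{\C_\sE}\ot\Psi$ preserves this grading (because $\Psi$ preserves the grading on~$\M$), the square expressing compatibility of $\Psi$ with the full coaction commutes if and only if, for every pair $x$, $y\in\sE$, one has $(\id_{\C_\sE^{x,y}}\ot\Psi_y)\circ\nu^\M_{x,y}=\nu^{\M'}_{x,y}\circ\Psi_x$.

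It remains to match this system of component conditions with the condition that $(\Psi_x)_{x\in\sE}$ be a morphism of left $\sE$\+modules, and this is the step that requires the most care. Here condition~(i) of local finiteness enters: because $\Hom_\sE(x,y)$ is finite-dimensional, one has $\C_\sE^{x,y}\ot_k\M_y=\Hom_\sE(x,y)^*\ot_k\M_y\simeq\Hom_k(\Hom_\sE(x,y),\M_y)$, so that the coaction component $\nu_{x,y}$ corresponds bijectively to the action map $\Hom_\sE(x,y)\ot_k\M_x\rarrow\M_y$, exactly as in Construction~\ref{from-comodule-construction}. Under this correspondence, the equality $(\id\ot\Psi_y)\circ\nu^\M_{x,y}=\nu^{\M'}_{x,y}\circ\Psi_x$ translates into commutativity of the action square relating $\Psi_x$ and~$\Psi_y$, i.e.\ into the statement that $(\Psi_x)$ commutes with the $\sE$\+action. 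Running this equivalence forward shows that $\Upsilon_\sE(\Psi)$ is a morphism of $\sE$\+modules; running it backward, starting from an arbitrary $\sE$\+module morphism $\phi=(\phi_x)\:M\rarrow M'$, shows that $\Phi=\bigoplus_x\phi_x$ satisfies all the component conditions and is therefore a genuine $\C_\sE$\+comodule morphism lifting~$\phi$, which yields fullness.

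The principal obstacle, then, is verifying that the finite-dimensional duality converts the coaction-compatibility square into the action-compatibility square faithfully and without loss of information, and that no convergence issue intervenes when passing between the full coaction and its components. The latter is guaranteed by the fact that the coaction lands in the direct sum $\C_\sE\ot_k\M$ rather than a product, so that $\nu_x(m)$ has only finitely many nonzero components for each $m\in\M_x$; consequently the reassembly $\Phi=\bigoplus_x\phi_x$ of a compatible family of component maps is well defined as a single comodule morphism, and the equivalence of the two descriptions is a genuine bijection on morphisms.
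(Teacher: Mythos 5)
Your proof is correct and follows essentially the same route as the paper's: exactness and faithfulness are read off objectwise from the decomposition $\M=\bigoplus_{x\in\sE}\M_x$, and fullness is obtained by matching the coaction components $\nu_{x,y}$ with the action maps via finite-dimensional duality, using the key fact that a map into the direct sum $\bigoplus_{y\in\sE}\C_\sE^{x,y}\ot_k\M_y$ is determined by its components (since the direct sum embeds into the product). The paper states these points more tersely, calling them ``clear from the construction''; your write-up simply supplies those details.
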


\begin{proof}
 The exactness and faithfulness are obvious from the construction.
 Let us briefly explain why the functor is full.
 Essentially, the point is that the coaction maps $\nu_{x,y}\:\M_x
\rarrow\C_\sE^{x,y}\ot_k\M_y$ are uniquely determined by the action
maps $\Hom_\sE(x,y)\ot_k M(x)\rarrow M(y)$.
 Furthermore, the compatibility with the former is equivalent to
the compatibility with the latter, for any pair of left
$\C_\sE$\+comodules $\LL$ and $\M$ and a given $\C_\sE^\id$\+comodule
map $\LL\rarrow\M$.
 These properties are clear from the construction as well.
 It is illuminating to observe that a $k$\+linear map
$\nu_x\:\M_x\rarrow\bigoplus_{y\in\sE}\C_\sE^{x,y}\ot_k\M_y$
is determined by its components $\nu_{x,y}\:\M_x\rarrow
\C_\sE^{x,y}\ot_k\M_y$, essentially because the direct sum
$\bigoplus_{y\in\sE}\C_\sE^{x,y}\ot_k\M_y$ is a vector subspace
of the product $\prod_{y\in\sE}\C_\sE^{x,y}\ot_k\M_y$.
\end{proof}

\begin{defn} \label{locally-finite-modules-and-comodules}
 Let $\sE$ be a locally finite $k$\+linear category.
 A left $\sE$\+module $M$ is said to be \emph{locally finite} if,
viewed as a functor $\sE\rarrow k\Vect$, it takes values in
the category of finite-dimensional vector spaces $k\vect\subset
k\Vect$, that is $M\:\sE\rarrow k\vect$.
 In other words, $M$ is locally finite if and only if
the $k$\+vector space $M(x)$ is finite-dimensional for all $x\in\sE$.
 Locally finite right $\sE$\+modules are defined similarly.

 A left $\C_\sE$\+comodule $\M$ is said to be \emph{locally finite}
if the corresponding left $\sE$\+module $M=\Upsilon_\sE(\M)$ is
locally finite.
 In other words, $\M$ is locally finite if and only if the vector
space $\M_x$ is finite-dimensional for every $x\in\sE$.
 Locally finite right $\C_\sE$\+comodules are defined similarly.
\end{defn}

\begin{thm} \label{locally-finite-comodules-described}
 Let\/ $\sE$ be a locally finite $k$\+linear category and
$M\:\sE\rarrow k\vect$ be a locally finite left\/ $\sE$\+module.
 Then $M$ arises from some (locally finite) left\/ $\C_\sE$\+comodule
via the comodule inclusion functor\/ $\Upsilon_\sE$ if and only if,
for every fixed object $x\in\sE$, the set of all objects $y\in\sE$ with
nonzero action map
$$
 \Hom_\sE(x,y)\ot_k M(x)\lrarrow M(y)
$$
is finite.
\end{thm}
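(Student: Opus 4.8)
The plan is to make the correspondence of Construction~\ref{from-comodule-construction} fully explicit and to observe that the finiteness condition in the statement is exactly what is needed for a candidate coaction map to land in the \emph{direct sum} $\C_\sE\ot_k\M$ rather than merely in the corresponding direct product. Write $\M=\bigoplus_{x\in\sE}\M_x$ with $\M_x=M(x)$, and use the identification $\C_\sE^{x,y}=\Hom_\sE(x,y)^*$ together with the finite-dimensionality of $\Hom_\sE(x,y)$ to pass, via the tensor-hom adjunction, between a coaction component $\nu_{x,y}\:\M_x\rarrow\C_\sE^{x,y}\ot_k\M_y$ and its adjoint action map $a_{x,y}\:\Hom_\sE(x,y)\ot_k M(x)\rarrow M(y)$. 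This passage is an isomorphism of vector spaces of maps; in particular $\nu_{x,y}=0$ if and only if $a_{x,y}=0$.

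For the ``only if'' direction, suppose $M=\Upsilon_\sE(\M)$. By Construction~\ref{from-comodule-construction} the coaction map restricts on each summand to the map $\nu_x$ of~\eqref{nu-x-formula}, with image in $\bigoplus_{y\in\sE}\C_\sE^{x,y}\ot_k\M_y$. Since $\M_x$ is finite-dimensional and the target is a direct sum, the image of $\nu_x$ lies in a finite subsum; hence only finitely many $\nu_{x,y}$ are nonzero, and by the adjunction above only finitely many $a_{x,y}$ are nonzero.

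For the ``if'' direction, I would run Construction~\ref{from-comodule-construction} in reverse. Set $\M=\bigoplus_{x}M(x)$ and define $\nu_{x,y}$ to be the map adjoint to the given action map $a_{x,y}$. The hypothesis says that for each fixed $x$ only finitely many $a_{x,y}$, hence only finitely many $\nu_{x,y}$, are nonzero; this is precisely what guarantees that the assembled map $\nu_x\:\M_x\rarrow\bigoplus_y\C_\sE^{x,y}\ot_k\M_y$ has image in the direct sum, so that the direct sum over $x$ of the $\nu_x$ is a genuine $k$\+linear map $\nu\:\M\rarrow\C_\sE\ot_k\M$. It then remains to check the counitality and coassociativity axioms. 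Counitality reduces summand by summand to the unitality of the $\sE$\+module action: the counit component $\epsilon_x$ pairs $\C_\sE^{x,x}$ with $\id_x$, so that $\nu_{x,x}$ followed by the counit recovers the identity action; this also identifies the induced $\C_\sE^\id$\+comodule structure (via Lemma~\ref{direct-sum-of-coalgebras}(a)) with the chosen decomposition $\M=\bigoplus_x\M_x$. Coassociativity reduces to associativity of the action: one first checks that both composites $\M_x\rarrow\C_\sE\ot_k\C_\sE\ot_k\M$ are well-defined --- here the finiteness hypothesis controls $\nu$ while the interval-finiteness built into $\mu$ in Construction~\ref{coalgebra-construction} controls $\mu\ot\id$ --- and then compares them component-wise, which amounts to dualizing the associativity axiom of the action maps $a_{x,y}$ relative to the composition maps of~$\sE$. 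By construction $\Upsilon_\sE(\M)=M$, completing the argument.

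The main obstacle is exactly the point where $\nu$ must be shown to take values in the direct sum, as this is the only place the hypothesis enters; it is the phenomenon flagged at the end of the proof of Proposition~\ref{comodule-fully-faithful}, namely that $\bigoplus_y\C_\sE^{x,y}\ot_k\M_y$ is a proper subspace of $\prod_y\C_\sE^{x,y}\ot_k\M_y$ in general. Everything else is a routine dualization of the module axioms, made available by the finite-dimensionality of each $\Hom_\sE(x,y)$ and each $M(x)$; in particular, no appeal to strict local finiteness is needed, which is why the comodule case is much simpler than the contramodule one.
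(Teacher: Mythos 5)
Your proof is correct and takes essentially the same approach as the paper's: both directions rest on the correspondence between coaction components $\nu_{x,y}$ and action maps via the finite-dimensional duals $\C_\sE^{x,y}=\Hom_\sE(x,y)^*$, with the ``only if'' direction using finite-dimensionality of $\M_x$ to force the image of $\nu_x$ into a finite subsum, and the ``if'' direction assembling $\nu$ from the action maps, the hypothesis guaranteeing it lands in the direct sum. The only difference is that you spell out the counitality and coassociativity verifications (reducing them to unitality and associativity of the module action), which the paper's proof leaves implicit in the phrase ``this defines a left $\C_\sE$\+comodule structure.''
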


\begin{proof}
 ``Only if'': let $\M$ be a locally finite left $\C_\sE$\+comodule and
$x\in\sE$ be an object.
 Then, for every vector $v\in\M_x$, the tensor $\nu(v)\in\C_\sE\ot_k\M$
is a finite sum of decomposable tensors.
 Hence it must belong to the direct sum $\bigoplus_{y\in Y_v}
\C_\sE^{x,y}\ot_k\M_y\subset\C_\sE\ot_k\M$, where $Y_v\subset\sE$ is
some finite set of objects.
 Since the vector space $\M_x$ is finite-dimensional, it follows that
there exists a finite subset $Y_x\subset\sE$ such that the subspace
$\nu(\M_x)\subset\C_\sE\ot_k\M$ is contained in the direct sum
$\bigoplus_{y\in Y_x}\C_\sE^{x,y}\ot_k\M_y\subset\C_\sE\ot_k\M$.
 Now Construction~\ref{from-comodule-construction} tells that, for
the related left $\sE$\+module $M=\Upsilon_\sE(\M)$, the action map
$\Hom_\sE(x,y)\ot_k M(x)\lrarrow M(y)$ vanishes for all $y\notin Y_x$.

 ``If'': let $M$ be a (not necessarily locally finite) left
$\sE$\+module such that for every object $x\in\sE$ there is a finite
subset of objects $Y_x\subset\sE$ with the property that the action map
$\Hom_\sE(x,y)\ot_k M(x)\lrarrow M(y)$ vanishes for all $y\notin Y_x$.
 Put $\M_x=M(x)$ and $\M=\bigoplus_{x\in\sE}\M_x$, and define
the coaction map $\nu\:\M\rarrow\C_\sE\ot_k\M$ by the rule that its
restriction to $\M_x$ is the map
$$
 \nu_x\:\M_x\lrarrow\bigoplus\nolimits_{y\in Y_x}\C_\sE^{x,y}\ot_k\M_y
$$
as in~\eqref{nu-x-formula}, with the components $\nu_{x,y}\:\M_x
\rarrow\C_\sE^{x,y}\ot_k\M_y$ corresponding to the action maps
$\Hom_\sE(x,y)\ot_k M(x)\lrarrow M(y)$ in view of the isomorphisms
$\Hom_\sE(x,y)\simeq(\C_\sE^{x,y})^*$ for finite-dimensional
vector spaces~$\C_\sE^{x,y}$.
 This defines a left $\C_\sE$\+comodule structure on $\M$ such that
$\Upsilon_\sE(\M)=M$.
\end{proof}

\Section{Full-and-Faithfulness of Contramodule Forgetful Functor}
\label{contramodule-full-and-faithfulness-secn}

 The following construction is the contramodule version of
Construction~\ref{from-comodule-construction}.

\begin{constr} \label{from-contramodule-construction}
 Let $\sE$ be a locally finite $k$\+linear category.
 Then the contramodule forgetful functor
$$
 \Theta_\sE\:\C_\sE\Contra\lrarrow\sE\Modl
$$
is defined as follows.
 Suppose given a left $\C_\sE$\+contramodule~$\fP$.
 Then the contrarestriction of scalars with respect to the morphism of
coalgebras $\C_\sE\rarrow\C_\sE^\id$ (see
Construction~\ref{coalgebra-construction} and
Section~\ref{change-of-scalars-subsecn}) defines
a left $\C_\sE^\id$\+contramodule structure on~$\fP$.
 By Lemma~\ref{direct-sum-of-coalgebras}(b), the latter structure is
equivalent to the datum of a direct product decomposition
$\fP=\prod_{x\in\sE}\fP^x$.
 The $k$\+vector space $\fP^x$ is the image of the idempotent linear
operator $\epsilon_x\:\fP\rarrow\fP$ provided by the action in $\fP$
of the idempotent linear function $\C_\sE\twoheadrightarrow\C_\sE^{x,x}
\overset{\epsilon_x}\rarrow k$ on $\C_\sE$.

 With respect to this direct product decomposition of $\fP$ and
the direct sum decomposition of $\C_\sE$ described in
Construction~\ref{coalgebra-construction}, the contraaction map
$$
 \pi\:\Hom_k(\C_\sE,\fP)\lrarrow\fP
$$
has components
\begin{equation} \label{pi-y-formula}
 \pi^y\:\prod\nolimits_{x\in\sE}\Hom_k(\C_\sE^{x,y},\fP^x)
 \lrarrow\fP^y, \qquad y\in\sE,
\end{equation}
while all the other components vanish.
 Specifically, this means that the composition
$$
 \prod\nolimits_{x,y,z}^{(x,y)\ne(z,w)}\Hom_k(\C_\sE^{x,y},\fP^z)
 \,\rightarrowtail\Hom_k(\C_\sE,\fP)\overset\pi\rarrow\fP
 \,\twoheadrightarrow\,\fP^w
$$
vanishes for all $w\in\sE$; or in other words, the map~$\pi$
is the direct product of the maps~$\pi^y$.

 Now we put $P(x)=\fP^x$ for all objects $x\in\sE$, and define
the action maps
$$
 \Hom_\sE(x,y)\ot_k P(x)\lrarrow P(y)
$$
as the compositions
$$
 \Hom_\sE(x,y)\ot_k P(x)\,\simeq\,\Hom_k(\C_\sE^{x,y},P(x))
 \,\rightarrowtail\prod\nolimits_{z\in\sE}
 \Hom_k(\C_\sE^{z,y},P(z))\overset{\pi^y}\rarrow P(y),
$$
where the first isomorphism comes from the isomorphism
$\Hom_\sE(x,y)\simeq(\C_\sE^{x,y})^*$ for a finite-dimensional
$k$\+vector space~$\C_\sE^{x,y}$, while the middle morphism is
the natural direct summand inclusion.
 To finish the construction, it remains to set $\Theta_\sE(\fP)=P$.
\end{constr}

\begin{rem} \label{long-remark}
 For any coalgebra $\C$ over a field~$k$, the dual $k$\+vector space
$\C^*$ has a natural structure of a (topological) algebra over~$k$.
 The construction of the fully faithful comodule inclusion functor
$\Upsilon_\C\:\C\Comodl\rarrow\C^*\Modl$ goes back, at least,
to~\cite[Section~2.1]{Swe}; while the construction of
the contramodule forgetful functor $\Theta_\C\:\C\Contra\rarrow
\C^*\Modl$ was mentioned in~\cite[Section~A.1.2]{Psemi}.
 Our notation comes from the paper~\cite{Phff}, where the action of
the exact functors $\Upsilon_\C$ and $\Theta_\C$ on the Ext spaces
in the respective categories is discussed.
 In fact, the functor $\Upsilon_\C$ identifies the category of
$\C$\+comodules $\C\Comodl$ with the full subcategory in $\C^*\Modl$
consisting of all the \emph{discrete} modules over the topological
ring $\C^*$ (called ``rational'' in~\cite{Swe,Mon}).

 Let us explain the connection between the functors $\Upsilon_\sE$
and $\Theta_\sE$ defined in
Constructions~\ref{from-comodule-construction}
and~\ref{from-contramodule-construction}, on the one hand,
and the functors $\Upsilon_\C$ and $\Theta_\C$, on the other hand.
 For any preadditive category $\sE$, the category of left
$\sE$\+modules $\sE\Modl$ is equivalent to the category $R_\sE\Modl$
of locally unital left modules over the locally unital ring (ring
with enough idempotents) $R_\sE=\bigoplus_{x,y\in\sE}\Hom_\sE(x,y)$.
 In the situation at hand with a locally finite $k$\+linear category
$\sE$, the locally unital $k$\+algebra $R_\sE$ is a dense subalgebra
in the unital topological algebra $\C_\sE^*$ (with its natural
pro-finite-dimensional/linearly compact topology).

 The functor $\Upsilon_\sE\:\C_\sE\Comodl\rarrow\sE\Modl$ is
the composition of the functor $\Upsilon_{\C_\sE}\:\C_\sE\Comodl
\rarrow\C_\sE^*\Modl$ with the functor of restriction of scalars
$\C_\sE^*\Modl\rarrow R_\sE\Modl$.
 Similarly, the functor $\Theta_\sE\:\C_\sE\Contra\rarrow\sE\Modl$
is the composition of the functor $\Theta_{\C_\sE}\:\C_\sE\Contra
\rarrow\C_\sE^*\Modl$ with the functor of restriction of scalars
$\C_\sE^*\Modl\rarrow R_\sE\Modl$.
 The observations that the $\C_\sE$\+comodules are the same things as
the discrete $\C_\sE^*$\+modules and the subalgebra $R_\sE$ is dense
in $\C_\sE^*$ explain the fact that the functor $\Upsilon_\sE$ is
fully faithful, as per Proposition~\ref{comodule-fully-faithful}.
{\hbadness=1450\par}

 A \emph{coaugmentation}~$\gamma$ of a coalgebra $\C$ over a field~$k$
is a counital coalgebra homomorphism $\gamma\:k\rarrow\C$.
 A coaugmented coalgebra $(\C,\gamma)$ is said to be \emph{conilpotent}
if the noncounital quotient coalgebra $\D=\C/\gamma(k)$ is conilpotent
in the sense of the definition in Section~\ref{preliminaries-secn}.
 One can see that any conilpotent coalgebra $\C$ has a unique
coaugmentation~\cite[Section~3.3]{Pksurv}, \cite[Section~2]{Phff}.

 A conilpotent coalgebra $\C$ is said to be \emph{finitely cogenerated}
if the space $\Ext^1_\C(k,k)$, computed in the category of left
$\C$\+comodules $\C\Comodl$, with the $\C$\+comodule structure on~$k$
defined in terms of the coaugmentation~$\gamma$, is finite-dimensional.
 According to~\cite[Theorem~2.1]{Psm}, the contramodule forgetful
functor $\Theta_\C$ is fully faithful for any finitely cogenerated
conilpotent coalgebra~$\C$.
 Conversely, \cite[Example~7.2]{Phff} shows that, for any infinitely
cogenerated conilpotent coalgebra~$\C$, the functor $\Theta_\C$ is
not full.

 In fact, \cite[Theorem~2.1]{Psm} tells more.
 Let $\C$ be a finitely cogenerated conilpotent coalgebra and
$R\subset\C^*$ be a dense subring in the natural
(pro-finite-dimensional/linearly compact) topology on~$\C^*$.
 Then the composition of the functor $\Theta_\C\:\C\Contra\rarrow
\C^*\Modl$ with the functor of restriction of scalars
$\C^*\Modl\rarrow R\Modl$ is a fully faithful functor
$\C\Contra\rarrow R\Modl$. {\hbadness=1050\par}

 The coalgebra $\C_\sE$ is usually \emph{not} conilpotent, of course.
 The aim of this section is to obtain an analogue
of~\cite[Theorem~2.1]{Psm} for the coalgebras~$\C_\sE$.
\end{rem}

\begin{ex} \label{contra-ff-counterex}
 It is obvious from the construction that the functor $\Theta_\sE$ is
exact and faithful for any locally finite $k$\+linear category~$\sE$.
 The following counterexample shows, however, that the functor
$\Theta_\sE$ \emph{need not} be fully faithful for a locally finite
$k$\+linear category $\sE$ in general.

 Let $\sE$ be the $k$\+linear category whose set of objects is
the set of all negative integers $\boZ_{<0}$.
 For any integer $n\ge1$, the $k$\+vector space $\Hom_\sE(-n,-n)$
is one-dimensional and spanned by the identity morphism~$\id_{-n}$.
 For any integer $n\ge2$, the $k$\+vector space $\Hom_\sE(-n,-1)$
is one-dimensional and spanned by a morphism $f_{-n,-1}\in
\Hom_\sE(-n,-1)$.
 There are no other nonzero morphisms in~$\sE$.

 The category of left $\C_\sE$\+comodules agrees with the category
of left $\sE$\+modules; in other words, the functor $\Upsilon_\sE$ is
a category equivalence (cf.\
Proposition~\ref{upper-finite-comodule-equivalence} below).
 The datum of a left $\C_\sE$\+comodule is equivalent to the datum
of a family of $k$\+vector spaces $\M_{-n}$, \,$n\ge1$, together with
arbitrary $k$\+linear maps
$$
 f_{-n,-1}\:\M_{-n}\lrarrow\M_{-1}, \qquad n\ge2.
$$
 The $\C_\sE$\+comodule $\M$ is recovered as
$\M=\bigoplus_{n\ge1}\M_{-n}$.

 The category of right $\C_\sE$\+comodules does \emph{not} agree with
the category of right $\sE$\+modules; so the functor
$\Upsilon_{\sE^\sop}$ is \emph{not} an equivalence.
 The datum of a right $\C_\sE$\+comodule is equivalent to the datum
of a family of $k$\+vector spaces $\N_{-n}$, \,$n\ge1$, together with
an arbitrary $k$\+linear map
$$
 G\:\N_{-1}\lrarrow\bigoplus\nolimits_{n\ge2}\N_{-n}.
$$
 The $\C_\sE$\+comodule $\N$ is recovered as
$\N=\bigoplus_{n\ge1}\N_{-n}$.
 In particular, if $\N_{-1}$ is finite-dimensional, then the datum
of a map $G$ is a equivalent to that of a family of linear maps
$g_{-n,-1}\:\N_{-1}\rarrow\N_{-n}$, \,$n\ge2$, such that \emph{only
finitely many of the maps $g_{-n,-1}$ are nonzero}
(cf.\ Theorem~\ref{locally-finite-comodules-described}).
 On the other hand, the datum of a right $\sE$\+module is equivalent
to that of a family of $k$\+vector spaces $N(-n)$, \,$n\ge1$,
together with arbitrary $k$\+linear maps
$$
 g_{-n,-1}\:N(-1)\lrarrow N(-n), \qquad n\ge2.
$$

 The datum of a left $\C_\sE$\+contramodule $\fP$ is equivalent to
the datum of a family of $k$\+vector spaces $\fP^{-n}$, \,$n\ge1$,
together with an arbitrary $k$\+linear map
$$
 F\:\prod\nolimits_{n\ge2}\fP^{-n}\lrarrow\fP^{-1}.
$$
 The $\C_\sE$\+contramodule $\fP$ is recovered as
$\fP=\prod_{n\ge1}\fP^{-n}$.
 The functor $\Theta_\sE$ assigns to $\fP$ the family of $k$\+vector
spaces $P(-n)=\fP^{-n}$ endowed with the family of maps $f_{-n,-1}\:
P(-n)\rarrow P(-1)$ obtained as restrictions of the map~$F$.
 Essentially, this means restricting the map $F$ to the vector subspace
$\bigoplus_{n\ge2}\fP^{-n}\subset\prod_{n\ge2}\fP^{-n}$.

 For example, let $\fP^{-n}=k$ be one-dimensional $k$\+vector spaces
for all $n\ge1$.
 Let $F'\:\prod_{n\ge2}\fP^{-n}\rarrow\fP^{-1}$ be a nonzero linear map
annihilating the subspace $\bigoplus_{n\ge2}\fP^{-n}\subset
\prod_{n\ge2}\fP^{-n}$, and let $F''\:\prod_{n\ge2}\fP^{-n}\rarrow
\fP^{-1}$ be the zero map.
 Then the $\C_\sE$\+contramodules $\fP'$ and $\fP''$ defined by
the maps $F'$ and $F''$ are \emph{not} isomorphic to each other, but
their images under the functor $\Theta_\sE$ \emph{are} isomorphic as
left $\sE$\+modules.
 Thus the functor $\Theta_\sE$ is \emph{not} fully faithful.
\end{ex}

 We start with a discussion of a trivial particular case when
it is easy to show that the functor $\Theta_\sE$ is not only fully
faithful, but even a category equivalence.
 Then we will proceed to introduce a much more general sufficient
condition on a locally finite $k$\+linear category $\sE$
guaranteeing that the functor $\Theta_\sE$ is fully faithful.

\begin{rem} \label{coalgebras-on-short-and-long-morphisms-remark}
 Let $\sE$ be a locally finite $k$\+linear category.
 Denote by $\sE^\sim\subset\sE$ the $k$\+linear subcategory of
\emph{short morphisms} in~$\sE$ (in the sense of
Definition~\ref{morphism-preorder}).
 This means that, given two objects $x$, $y\in\sE$, we put
$\Hom_{\sE^\sim}(x,y)=\Hom_\sE(x,y)$ if $x\sim y$ and
$\Hom_{\sE^\sim}(x,y)=0$ otherwise.
 So we have the $k$\+linear inclusion functor $\sE^\sim\rarrow\sE$.

 We are more interested, however, in the natural functor in
the opposite direction.
 The functor $\sE\rarrow\sE^\sim$ acts by the identity map on
the objects and by surjective maps on the morphisms, and represents
$\sE^\sim$ as the quotient category of $\sE$ by the two-sided ideal
of \emph{long morphisms}.
 More precisely, the functor $\sE\rarrow\sE^\sim$ takes every short
morphism to itself and every long morphism to zero.

 Consider the vector subspace
$$
 \C_\sE^\sim=\bigoplus\nolimits_{x,y\in\sE}^{x\sim y}\C_\sE^{x,y}
$$
in the coalgebra~$\C_\sE$.
 One can easily see from the definition of the preorder~$\preceq$
on the set of objects of $\sE$ that $\C_\sE^\sim$ is a subcoalgebra
in~$\C_\sE$.
 The coalgebra $\C_\sE^\sim$ arises via
Construction~\ref{coalgebra-construction} applied to the locally finite
$k$\+linear category $\sE^\sim$, that is
$$
 \C_\sE^\sim=\C_{\sE^\sim}.
$$
 The inclusion of coalgebras $\C_\sE^\sim\rarrow\C_\sE$ is induced
by the surjective $k$\+linear functor $\sE\rarrow\sE^\sim$ (while
the inclusion of categories $\sE^\sim\rarrow\sE$ induces a surjective
morphism of coalgebras $\C_\sE\rarrow\C_\sE^\sim$, which we will
never use).

 The quotient space of any coalgebra by a subcoalgebra has a natural
structure of noncounital coalgebra (see
Section~\ref{change-of-scalars-subsecn},
cf.\ the discussion in~\cite[Section~3.1]{Pksurv}).
 So the quotient vector space
$$
 \D_\sE^\prec=\C_\sE/\C_\sE^\sim=
 \bigoplus\nolimits_{x,y\in\sE}^{x\prec y}\C_\sE^{x,y}
$$
is naturally a noncounital coalgebra over~$k$.
 The natural surjective morphism (direct summand projection)
$\C_\sE\rarrow\D_\sE^\prec$ is a morphism of noncounital coalgebras.

 By the contrarestriction of scalars (see
Section~\ref{change-of-scalars-subsecn},
cf.~\cite[Section~8.4]{Pksurv}), any left $\C_\sE$\+contramodule
acquires a left $\D_\sE^\prec$\+contramodule structure, while
any left $\C_\sE^\sim$\+con\-tra\-mod\-ule structure on a vector space
$\fW$ gives rise to a left $\C_\sE$\+contramodule structure on~$\fW$.
 The notation in this remark compares with the one in
Section~\ref{change-of-scalars-subsecn} by the rules
$\B=\C_\sE^\sim$, \ $\C=\C_\sE$, and $\D=\D_\sE^\prec$.

 Our proofs below are based on the following important observation:
 The condition of interval finiteness of the preorder~$\preceq$
from Definition~\ref{locally-finite-category} implies that
the noncounital coalgebra $\D_\sE^\prec$ is conilpotent.
 So Lemma~\ref{nakayama} is applicable.
\end{rem}

\begin{lem} \label{coalgebra-on-short-morphisms-lemma}
 For any locally finite $k$\+linear category\/ $\sE$, the contramodule
forgetful functor\/ $\Theta_{\sE^\sim}\:\C_\sE^\sim\Contra\rarrow
\sE^\sim\Modl$ is an equivalence of categories.
\end{lem}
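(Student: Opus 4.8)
The plan is to exploit the fact that short morphisms connect only objects lying in a single equivalence class of the relation~$\sim$, and that each such class is finite, so that the coalgebra $\C_\sE^\sim$ breaks up into a direct sum of finite-dimensional blocks over which contramodules are nothing but modules over the dual algebra. First I would record the key finiteness observation: the relation~$\sim$ is an equivalence relation on the objects of $\sE$, and for any object $x$ the class $\{z\in\sE\mid z\sim x\}$ is contained in the interval $\{z\in\sE\mid x\preceq z\preceq x\}$, which is finite by condition~(ii) of Definition~\ref{locally-finite-category}. Hence every $\sim$\+class $\fS$ is a finite full $k$\+linear subcategory of $\sE^\sim$, and $\sE^\sim$ is the disjoint union (coproduct of $k$\+linear categories) of these finite blocks~$\fS$. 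Since $\Hom_{\sE^\sim}(x,y)=0$ whenever $x\not\sim y$, the coalgebra $\C_\sE^\sim=\C_{\sE^\sim}$ decomposes accordingly as a direct sum of coalgebras $\C_\sE^\sim=\bigoplus_\fS\C_\fS$, where $\C_\fS=\bigoplus_{x,y\in\fS}\C_\sE^{x,y}$; here one uses that for $x\sim y$ every $z$ with $x\preceq z\preceq y$ again satisfies $z\sim x\sim y$, so the comultiplication of Construction~\ref{coalgebra-construction} preserves the blocks.

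Next I would pass to the block decomposition on both sides of the functor. By Lemma~\ref{direct-sum-of-coalgebras}(b) there is an equivalence $\C_\sE^\sim\Contra\simeq\bigtimes\nolimits_\fS\C_\fS\Contra$, while a left $\sE^\sim$\+module is the same as a tuple of left $\fS$\+modules, so $\sE^\sim\Modl\simeq\bigtimes\nolimits_\fS\fS\Modl$. The forgetful functor $\Theta_{\sE^\sim}$ respects these product decompositions, acting as $\Theta_\fS$ in each factor; this is immediate from Construction~\ref{from-contramodule-construction}, since the contraaction map and the induced action maps only couple objects within one block. Thus it suffices to prove that $\Theta_\fS\:\C_\fS\Contra\rarrow\fS\Modl$ is an equivalence for each finite block~$\fS$.

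For a finite block $\fS$, the coalgebra $\C_\fS=\bigoplus_{x,y\in\fS}\Hom_\sE(x,y)^*$ is finite-dimensional, being a finite direct sum of finite-dimensional spaces by condition~(i). For a finite-dimensional coalgebra the natural map $\C_\fS^*\ot_k\fP\rarrow\Hom_k(\C_\fS,\fP)$ is an isomorphism for every vector space $\fP$, under which the contraaction~$\pi$ becomes a map $\C_\fS^*\ot_k\fP\rarrow\fP$; the contraassociativity and contraunitality axioms then translate exactly into the associativity and unitality of a left module structure over the dual algebra $R_\fS=\C_\fS^*$. Hence $\C_\fS\Contra\simeq R_\fS\Modl$. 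Finally $R_\fS=\bigoplus_{x,y\in\fS}\Hom_\sE(x,y)$ is a unital finite-dimensional $k$\+algebra, with unit $\sum_{x\in\fS}\id_x$ (a finite sum), whose category of left modules is canonically the category of left $\fS$\+modules; tracing the identifications shows that the resulting composite equivalence is precisely $\Theta_\fS$. This proves the lemma.

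I expect that nothing here is genuinely hard, and that the only delicate point is the bookkeeping in the middle step, namely checking that the product decomposition of contramodules furnished by Lemma~\ref{direct-sum-of-coalgebras}(b) matches the blockwise decomposition of $\sE^\sim$\+modules, both being products of categories indexed by the $\sim$\+classes. The finiteness of each block renders this transparent, since the products $\prod$ appearing in Construction~\ref{from-contramodule-construction} collapse to finite direct sums within a single block, so no subtlety between products and coproducts arises; the reduction to the finite-dimensional coalgebra case and the identification of its contramodules with modules over the dual algebra are then entirely routine.
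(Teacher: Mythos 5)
Your proposal is correct and follows essentially the same route as the paper's own proof: decompose $\sE^\sim$ into its finite $\sim$\+equivalence classes, decompose $\C_\sE^\sim$ as the corresponding direct sum of finite-dimensional coalgebras, invoke Lemma~\ref{direct-sum-of-coalgebras}(b) to match the Cartesian product decompositions of both categories, and identify contramodules over each finite-dimensional block coalgebra with modules over its dual algebra $R_{\sE^\xi}\simeq(\C_\sE^\xi)^*$. The extra details you supply (the interval argument for finiteness of each class, and the isomorphism $\C_\fS^*\ot_k\fP\simeq\Hom_k(\C_\fS,\fP)$) are exactly the points the paper leaves implicit.
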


\begin{proof}
 Let $\Xi$ denote the set of all equivalence classes of objects
of the category $\sE$ with respect to the equivalence relation~$\sim$.
 For every $\xi\in\Xi$, denote by $\sE^\xi\subset\sE^\sim\subset\sE$
the full subcategory on the set of all objects $x\in\xi$.
 Then the $k$\+linear category $\sE^\sim$ is the (preadditive)
disjoint union of the $k$\+linear categories $\sE^\xi$ over all
$\xi\in\Xi$.
 Similarly, the coalgebra $\C_\sE^\sim$ is the direct sum of
the coalgebras $\C_\sE^\xi=\C_{\sE^\xi}$,
$$
 \C_\sE^\sim=\bigoplus\nolimits_{\xi\in\Xi}\C_\sE^\xi.
$$
 Furthermore, it is clear from
Definition~\ref{locally-finite-category} that the coalgebra
$\C_\sE^\xi$ is finite-dimensional for every $\xi\in\Xi$.
 The finite-dimensional, unital algebra $R_{\sE^\xi}=
\bigoplus_{x,y\in\sE^\xi}\Hom_{\sE^\xi}(x,y)$
is naturally isomorphic to the dual algebra $\bigl(\C_\sE^\xi\bigr)^*$
to the coalgebra~$\C_\sE^\xi$.
 Hence the functor $\Theta_{\sE^\xi}\:\C_\sE^\xi\Contra\rarrow
\sE^\xi\Modl$ is a category equivalence.

 It remains to refer to Lemma~\ref{direct-sum-of-coalgebras}(b)
to the effect that any $\C_\sE^\sim$\+contramodule decomposes
naturally as an infinite product of $\C_\sE^\xi$\+contramodules
over all $\xi\in\Xi$.
 So the category $\C_\sE^\sim\Contra$ is the Cartesian product of
the categories $\C_\sE^\xi\Contra$.
 Similarly, the category $\sE^\sim\Contra$ is clearly
the Cartesian product of the categories $\sE^\xi\Contra$.
 The functor $\Theta_{\sE^\sim}$ is the Cartesian product of
the category equivalences $\Theta_{\sE^\xi}$, hence also
a category equivalence.

 Alternatively, one could observe that the $k$\+linear category
$\sE^\sim$ is lower (and upper) finite in the sense of
Definition~\ref{upper-lower-finite} for any locally finite
$k$\+linear category~$\sE$.
 So it suffices to refer to
Proposition~\ref{lower-finite-contramodule-equivalence} below.
\end{proof}

 The following definition spells out a property of locally finite
$k$\+linear categories $\sE$ implying that the functor
$\Theta_\sE$ is fully faithful.

\begin{defn} \label{lower-strict-locally-finite}
 Let $\sE$ be a locally finite $k$\+linear category.
 We will say that $\sE$ is \emph{lower strictly locally finite} if
the following condition is satisfied: For any object $y\in\sE$
there exists a finite set of objects $X_y\subset\sE$ such that
$x\prec y$ (in the sense of Definition~\ref{morphism-preorder})
for all $x\in X_y$ and, for every morphism $f\in\Hom_\sE(z,y)$
with $z\prec y$ there exist two finite families of morphisms $g_i\:
z\rarrow x_i$ and $h_i\:x_i\rarrow y$, \ $1\le i\le n$, \ $n\ge0$,
\ $x_i\in X_y$, such that $f=\sum\nolimits_{i=1}^n h_ig_i$.
 In other words, this means that the multiplication map
$\bigoplus_{x\in X_y}\Hom_\sE(x,y)\ot_k\Hom_\sE(z,x)\lrarrow
\Hom_\sE(z,y)$ should be surjective for all $z\in\sE$ such that
$z\prec y$.

 Dually, a locally finite $k$\+linear category $\sE$ is said
to be \emph{upper strictly locally finite} if the opposite
category $\sE^\sop$ is lower strictly locally finite.
\end{defn}

 For example, the $k$\+linear category $\sE$ from
Example~\ref{contra-ff-counterex} is \emph{not} lower strictly
locally finite (take $y=-1$).

\begin{thm} \label{main-full-and-faithfulness-theorem}
 Let\/ $\sE$ be a lower strictly locally finite $k$\+linear category.
 Then the contramodule forgetful functor\/ $\Theta_\sE\:\C_\sE\Contra
\rarrow\sE\Modl$ is fully faithful.
\end{thm}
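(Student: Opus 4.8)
The plan is to take faithfulness for granted—the functor $\Theta_\sE$ is exact and faithful for every locally finite $\sE$ (Example~\ref{contra-ff-counterex})—and to concentrate on fullness. So I would fix left $\C_\sE$\+contramodules $\fP$, $\fQ$ and a morphism of $\sE$\+modules $\phi\:\Theta_\sE\fP\rarrow\Theta_\sE\fQ$, and show it lifts to a (necessarily unique) contramodule morphism. Writing $\phi$ through the direct product decompositions $\fP=\prod_x\fP^x$ and $\fQ=\prod_x\fQ^x$ of Construction~\ref{from-contramodule-construction} as a family of $k$\+linear maps $\phi_x\:\fP^x\rarrow\fQ^x$, I form the candidate total map $\Phi=\prod_x\phi_x\:\fP\rarrow\fQ$. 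By faithfulness it suffices to prove that $\Phi$ intertwines the contraactions, i.e.\ that the defect $d(\Phi)=\Phi\circ\pi_\fP-\pi_\fQ\circ\Hom_k(\C_\sE,\Phi)\:\Hom_k(\C_\sE,\fP)\rarrow\fQ$ vanishes. The hypothesis that $\phi$ is $\sE$\+linear says precisely that $\Phi$ intertwines the \emph{finite} parts of the contraactions $\pi^y$ from~\eqref{pi-y-formula}, so that $d(\Phi)$ already vanishes on the direct sum $\bigoplus_{x}\Hom_k(\C_\sE^{x,y},\fP^x)\subset\prod_x\Hom_k(\C_\sE^{x,y},\fP^x)$ for every~$y$. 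The whole difficulty is to upgrade this to vanishing on the full product.

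Next I would split $\C_\sE=\C_\sE^\sim\oplus\D_\sE^\prec$ along short and long morphisms, as in Remark~\ref{coalgebras-on-short-and-long-morphisms-remark}. The short contributions to each $\pi^y$ are indexed by the objects $x\sim y$, of which there are only finitely many by interval finiteness (Definition~\ref{locally-finite-category}); hence the short part of $\pi^y$ is a \emph{finite} product, contained in the direct sum on which $d(\Phi)$ has just been seen to vanish. (This is consistent with Lemma~\ref{coalgebra-on-short-morphisms-lemma}, by which $\Theta_{\sE^\sim}$ is already an equivalence.) Thus $d(\Phi)$ vanishes on $\Hom_k(\C_\sE^\sim,\fP)$, and the entire problem concentrates on the long part $\Hom_k(\D_\sE^\prec,\fP)$, indexed by the pairs $x\prec y$.

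For the long part I would feed in left strict local finiteness (Definition~\ref{left-strict-locally-finite}) together with conilpotence. Dualizing the factorization $f=\sum_i h_ig_i$ through the finite set $X_y$, the $X_y$\+component of the comultiplication $\C_\sE^{z,y}\rarrow\bigoplus_{x\in X_y}\C_\sE^{z,x}\ot_k\C_\sE^{x,y}$ is \emph{injective} for every $z\prec y$. Via the contraassociativity axiom~\eqref{contraassociativity}, this injectivity lets me rewrite the long part of $\pi^y$ as a two\+step operation factoring through the finite family $(\fP^x)_{x\in X_y}$: a contraaction into the $\fP^x$ with $x\in X_y$, followed by the finitely many specific action maps $\fP^x\rarrow\fP^y$ induced by the $h_i$. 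The latter lie in the direct sum, so $\Phi$ already respects them; consequently compatibility of $\Phi$ with the long contraaction into $\fP^y$ is reduced to compatibility with the contraactions $\pi^x$ into objects $x\prec y$, that is, to compatibility one step deeper in the conilpotent coalgebra $\D_\sE^\prec$ (which is conilpotent precisely by interval finiteness, Remark~\ref{coalgebras-on-short-and-long-morphisms-remark}). Iterating this reduction along the conilpotent filtration, with the base case supplied by the vanishing on the short part and the direct sum, I obtain that $d(\Phi)$ vanishes modulo every finite stage of the filtration.

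The final—and hardest—step is to pass from ``vanishing modulo each finite stage'' to genuine vanishing, and this is exactly where the Nakayama lemma for contramodules enters. The reductions above show that the obstruction carried by the long part of $d(\Phi)$ is divisible, as a $\D_\sE^\prec$\+contramodule datum, by the conilpotent comultiplication to all orders; equivalently, it organizes into a $\D_\sE^\prec$\+contramodule whose contraaction is forced to be surjective by the finite factorizations of strict local finiteness. Lemma~\ref{nakayama}(b) then shows this contramodule is zero, so $d(\Phi)=0$ and $\Phi$ is a contramodule morphism. I expect the main obstacle to lie precisely here: turning the informal ``two\+step reductions'' into an honest induction over the conilpotent filtration of $\D_\sE^\prec$ and verifying the surjectivity hypothesis needed to invoke Lemma~\ref{nakayama}(b), rather than in the essentially formal short\+part and direct\+sum bookkeeping. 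This is also the point at which \emph{left strict} local finiteness, as opposed to mere local finiteness, is indispensable, in agreement with the counterexample of Example~\ref{contra-ff-counterex}.
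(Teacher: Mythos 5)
Your opening moves are sound and in fact coincide with the paper's computations: the defect $d(\Phi)$ does vanish on each direct sum $\bigoplus_x\Hom_k(\C_\sE^{x,y},\fP^x)$ (this is exactly $\sE$\+linearity of~$\phi$), hence on the short part $\Hom_k(\C_\sE^\sim,\fP)$ (there are only finitely many $x\sim y$ by interval finiteness), and the injectivity of the $X_y$\+component of the comultiplication dual to the factorizations $f=\sum_i h_ig_i$ is precisely the paper's key observation (its maps \eqref{U-cogeneration-map}--\eqref{X_y-contraaction-map}). The genuine gap is in your last paragraph. You propose to iterate the $X_y$\+reduction on the \emph{source} side, conclude that $d(\Phi)$ ``vanishes modulo every finite stage of the conilpotent filtration'', and then pass to the limit by applying Lemma~\ref{nakayama}(b) to an ``obstruction contramodule'' whose contraaction is ``forced to be surjective''. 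Neither half of this can be carried out as stated. First, vanishing modulo every stage does not imply vanishing: a contramodule over a conilpotent coalgebra need not be separated with respect to the filtration by images of iterated contraactions (the intersection of all stages can be nonzero --- this non-separatedness is the basic pathology of contramodule theory), and your iteration genuinely never terminates: in the category of Example~\ref{noncomodule-noncontramodule-counterex} one may take $X_y=\{y-1\}$, so $X_y^{(n)}=\{y-n\}$ descends forever. Second, the obstruction contramodule is never constructed; what your reduction actually yields is that the images $O^y\subset\fQ^y$ of the long defect satisfy $O^y\subset\rho_\fQ^y(\text{maps valued in the }O^x,\ x\in X_y)$, which is the \emph{wrong} inclusion direction to make $\prod_y O^y$ a subcontramodule of $\fQ$, and no argument is offered for the surjectivity hypothesis needed to invoke Nakayama.

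The paper avoids any limit passage by a structural device absent from your proposal. Since $\pi_\fP\:\Hom_k(\C_\sE,\fP)\rarrow\fP$ is a surjective contramodule morphism from a free contramodule, one may assume $\fP=\Hom_k(\C_\sE,V)$ is free; one then subtracts from $\phi$ the genuine contramodule morphism $\psi$ extending $\phi|_V$, so the problem becomes showing that $\chi=\phi-\psi$, an $\sE$\+module map vanishing on the free generators $V$, is zero. Next one replaces $\fQ$ by the subcontramodule generated by $\im(\chi)$ and applies Nakayama to the \emph{target}: if $\fQ\ne0$, the image $\fS$ of the $\D_\sE^\prec$\+contraaction in $\fQ$ is a proper subcontramodule, and after passing to $\fQ/\fS$ the long contraaction, hence the long $\sE$\+action, of the target vanishes. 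Now a \emph{single}, non-iterated application of your $X_y$\+step finishes the argument: all morphisms $x\rarrow y$ with $x\in X_y$ are long, $\chi$ is $\sE$\+linear, and long actions vanish in the target, so $\chi$ annihilates the long contraaction of~$\fP$; therefore $\chi$ factors through $\fP/\fT$, where the equivalence of Lemma~\ref{coalgebra-on-short-morphisms-lemma} makes the factor map a contramodule morphism, so $\chi$ is one too; freeness of $\fP$ then forces $\chi=0$, i.e.\ $\im(\chi)\subset\fS$, contradicting the choice of~$\fQ$. The reduction to a free source and the application of Nakayama to the target (rather than to an obstruction built from the defect) are the missing ideas; without them your iteration has no mechanism to close.
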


\begin{proof}
 Let $\fP$ and $\fQ$ be two left $\C_\sE$\+contramodules, and let
$\phi\:\Theta_\sE(\fP)\rarrow\Theta_\sE(\fQ)$ be a morphism of
the related left $\sE$\+modules.
 Following Construction~\ref{from-contramodule-construction}, both
$\fP$ and $\fQ$ decompose naturally as infinite products
$\fP=\prod_{x\in\sE}\fP^x$ and $\fQ=\prod_{x\in\sE}\fQ^x$.
 The related left $\sE$\+modules $P$ and $Q$ are defined so that
$P(x)=\fP^x$ and $Q(x)=\fQ^x$ for all $x\in\sE$.
 The $\sE$\+module morphism~$\phi$ is a collection of maps
$\phi(x)\:P(x)\rarrow Q(x)$.
 Consider the product of maps $\prod_{x\in\sE}\phi(x)\:\fP\rarrow\fQ$,
and denote it also by~$\phi$.
 We need to show that~$\phi$ is a $\C_\sE$\+contramodule morphism.

 The contraaction map $\pi\:\Hom_k(\C_\sE,\fP)\rarrow\fP$ is
a surjective morphism of left $\C_\sE$\+contramodules (where
the vector space $\Hom_k(\C_\sE,\fP)$ is endowed with the free
left $\C_\sE$\+contramodule structure defined in
Section~\ref{preliminaries-secn}).
 In order to show that $\phi\:\fP\rarrow\fQ$ is a $\C_\sE$\+contramodule
morphism, is suffices to check that $\phi\circ\pi\:\Hom_k(\C_\sE,\fP)
\rarrow\fQ$ is a $\C_\sE$\+contramodule morphism.
 This reduces the original problem to the case when $\fP$ is a free
left $\C_\sE$\+contramodule.
 Without loss of generality, we will therefore assume that
$\fP=\Hom_k(\C_\sE,V)$ is a free $\C_\sE$\+contramodule generated
by some $k$\+vector space~$V$.

 The counit map $\C_\sE\rarrow k$ induces a natural inclusion
$V\rarrow\Hom_k(\C_\sE,V)$ representing $V$ as a space of free
generators of $\Hom_k(\C_\sE,V)$.
 The composition $V\rarrow\Hom_k(\C_\sE,V)\overset\phi\rarrow\fQ$
defines a $k$\+linear map $V\rarrow\fQ$, which can be uniquely
extended to a left $\C_\sE$\+contramodule map $\psi\:\Hom_k(\C_\sE,V)
\rarrow\fQ$.
 By construction, the maps~$\psi$ and $\phi\:\Hom_k(\C_\sE,V)
\rarrow\fQ$ agree in restriction to the vector subspace of free
$\C_\sE$\+contramodule generators $V\subset\Hom_k(\C_\sE,V)$;
we need to show that they agree on the whole $\C_\sE$\+contramodule
$\Hom_k(\C_\sE,V)$.

 Put $\chi=\phi-\psi$.
 Then $\chi\:\Hom_k(\C_\sE,V)\rarrow\fQ$ is a $k$\+linear map
about which we know that it corresponds, via the construction from
the first paragraph of this proof, to a left $\sE$\+module
morphism $\Theta_\sE(\Hom_k(\C_\sE,V))\rarrow\Theta_\sE(\fQ)$.
 We also know that the restriction of~$\chi$ to the vector subspace
of free $\C_\sE$\+contramodule generators $V\subset\Hom_k(\C_\sE,V)$
vanishes.
 We need to show that $\chi$~is a $\C_\sE$\+contramodule morphism,
or equivalently, that $\chi=0$.
 Replacing if needed the $\C_\sE$\+contramodule $\fQ$ by its
subcontramodule generated by the image of~$\chi$, we can assume
without loss of generality that no proper $\C_\sE$\+subcontramodule
of $\fQ$ contains the image of~$\chi$.
 Then our task is to prove that $\fQ=0$.
 It still suffices to show that~$\chi$ is a $\C_\sE$\+contramodule
morphism.

 Now we are using the notation and discussion of
Section~\ref{change-of-scalars-subsecn} and
Remark~\ref{coalgebras-on-short-and-long-morphisms-remark}.
 Assume for the sake of contradiction that $\fQ\ne0$.
 Consider the composition $\rho_\fQ\:\Hom_k(\D_\sE^\prec,\fQ)\rarrow\fQ$
of the injective map $\Hom_k(\D_\sE^\prec,\fQ)\rarrow\Hom_k(\C_\sE,\fQ)$
induced by the surjective map $\C_\sE\rarrow\D_\sE^\prec$ with
the $\C_\sE$\+contraaction map $\pi_\fQ\:\Hom_k(\C_\sE,\fQ)\rarrow\fQ$.
 By the contramodule Nakayama lemma (Lemma~\ref{nakayama}(b)),
the $\D_\sE^\prec$\+contraaction map~$\rho_\fQ$ \emph{cannot}
be surjective.
 The image of the map~$\rho_\fQ$ is a $\C_\sE$\+subcontramodule
in~$\fQ$; in fact, it is the maximal $\C_\sE$\+subcontramodule
$\fS\subset\fQ$ with the property that the $\C_\sE$\+contramodule
structure on the quotient contramodule $\fQ/\fS$ arises from
a $\C_\sE^\sim$\+contramodule structure. {\hbadness=1525\par}

 So $\fS=\im(\rho_\fQ)$ is a proper $\C_\sE$\+subcontramodule
of~$\fQ$; and we are already assuming that no proper
$\C_\sE$\+subcontramodule of $\fQ$ contains the image of~$\chi$.
 Replacing the contramodule $\fQ$ by the contramodule $\fQ/\fS$ and
the map $\chi\:\Hom_k(\C_\sE,V)\rarrow\fQ$ by its composition with
the natural surjection $\fQ\rarrow\fQ/\fS$, we can assume without loss
of generality that the left $\C_\sE$\+contramodule structure on $\fQ$
arises from a left $\C_\sE^\sim$\+contramodule structure.
 Informally, one can say that ``the contraaction of long morphisms
in $\fQ$ vanishes'' (after we've killed it by taking the quotient
by~$\fS$).

 At this point, let us return to the notation $\fP=\Hom_k(\C_\sE,V)$.
 Just as the maps $\phi$ and~$\psi$, and all such maps throughout
our argument, the map~$\chi$ decomposes as the product
$$
 \chi=\prod\nolimits_{x\in\sE}\chi^x\:
 \prod\nolimits_{x\in\sE}\fP^x\lrarrow\prod\nolimits_{x\in\sE}\fQ^x.
$$
 We have $P=\Theta_\sE(\fP)$ and $Q=\Theta_\sE(\fQ)$, where
$P(x)=\fP^x$ and $Q(x)=\fQ^x$ for all $x\in\sE$.
 The collection of maps $\chi(x)=\chi^x\:P(x)\rarrow Q(x)$ is
a left $\sE$\+module morphism $\chi\:P\rarrow Q$.

 Our next aim is to show that the contraaction of long morphisms
in $\fP$ is annihilated by~$\chi$.
 Specifically, the claim is that the composition
\begin{equation} \label{long-contraaction-composed-with-chi}
 \Hom_k(\D_\sE^\prec,\fP)\overset{\rho_\fP}\lrarrow
 \fP\overset\chi\lrarrow\fQ
\end{equation}
vanishes.
 Here $\rho_\fP$~is the composition $\Hom_k(\D_\sE^\prec,\fP)
\,\rightarrowtail\,\Hom_k(\C_\sE,\fP)\overset{\pi_\fP}\rarrow\fP$,
similarly to the notation in
Section~\ref{change-of-scalars-subsecn} and above.
 The vanishing of~\eqref{long-contraaction-composed-with-chi} would be
obvious if we knew $\chi$~to be a $\C_\sE$\+contramodule morphism;
the point is that we only know that $\chi$~is an $\sE$\+module morphism.

 With respect to the direct product decomposition over $x\in\sE$,
the map~\eqref{long-contraaction-composed-with-chi} consists of
the components
\begin{equation} \label{long-contraaction-composed-with-chi-y-component}
 \Hom_k(\D_\sE^\prec,\fP)^y\,=\,
 \prod\nolimits_{x\in\sE}^{x\prec y}
 \Hom_k(\C_\sE^{x,y},\fP^x)\overset{\rho_\fP^y}\lrarrow
 \fP^y\overset{\chi^y}\lrarrow\fQ^y, \qquad y\in\sE.
\end{equation}
 In order to prove that
the composition~\eqref{long-contraaction-composed-with-chi} vanishes,
it suffices to check that
the composition~\eqref{long-contraaction-composed-with-chi-y-component}
vanishes for every $y\in\sE$.

 So let us fix an object $y\in\sE$.
 We still have not used the assumption that $\sE$ is
lower strictly locally finite so far in this proof.
 Now we are using it.
 Consider the finite set of objects $X_y\subset\sE$ corresponding to
the object~$y$ as per Definition~\ref{lower-strict-locally-finite}.
 Put
$$
 U_y=\bigoplus\nolimits_{x\in X_y}\C_\sE^{x,y}.
$$
 So $U_y$ is a finite-dimensional $k$\+vector space.
 Denote by $q_y\:\C_\sE\rarrow U_y$ the direct summand projection.

\begin{lem} \label{U-cogenerators-lemma}
 Let\/ $\sE$ be a lower strictly locally finite $k$\+linear category
and $y\in\sE$ be an object.
 Let\/ $\fP$ be a left\/ $\C_\sE$\+contramodule,
$\rho_\fP\:\Hom_k(\D_\sE^\prec,\fP)\rarrow\fP$ be its\/
$\D_\sE^\prec$\+contraaction map, and 
\begin{equation} \label{long-contraaction-map}
 \prod\nolimits_{z\in\sE}^{z\prec y}
 \Hom_k(\C_\sE^{z,y},\fP^z)\overset{\rho_\fP^y}\lrarrow\fP^y
\end{equation}
be its $y$\+component.
 Then the image of
the contraaction map~\eqref{long-contraaction-map} is contained in
(equivalently, is equal to) the image of the composition
\begin{equation} \label{X_y-contraaction-map}
 \prod\nolimits_{x\in X_y}
 \Hom_k(\C_\sE^{x,y},\fP^x)\lrarrow
 \prod\nolimits_{x\in\sE}^{x\prec y}
 \Hom_k(\C_\sE^{x,y},\fP^x)\overset{\rho_\fP^y}\lrarrow\fP^y.
\end{equation}
 Here the leftmost map in~\eqref{X_y-contraaction-map} is
the direct summand inclusion induced by the direct summand
projection $q_y\:\C_\sE\rarrow U_y$, in the notation above.
\end{lem}

\begin{proof}
 Fix an object $z\in\sE$ such that $z\prec y$, and consider
the composition
\begin{equation} \label{U-cogeneration-map}
 \C_\sE^{z,y}\lrarrow
 \bigoplus\nolimits_{x\in\sE}
 \C_\sE^{z,x}\ot_k\C_\sE^{x,y}\lrarrow
 \bigoplus\nolimits_{x\in X_y}
 \C_\sE^{z,x}\ot_k\C_\sE^{x,y}.
\end{equation}
 Here the leftmost map is the $(z,y)$\+component of
the comultiplication in $\C_\sE$, while the rightmost map is induced by
the direct summand projection~$q_y$.

 The key observation is that the condition in
Definition~\ref{lower-strict-locally-finite} implies injectivity
of the composition~\eqref{U-cogeneration-map}.
 Indeed, the domain of the composition~\eqref{U-cogeneration-map} is
the dual vector space to $\Hom_\sE(z,y)$.
 The condition in Definition~\ref{lower-strict-locally-finite} tells
that the composition/multiplication map in the category~$\sE$,
$$
 \bigoplus\nolimits_{x\in X_y}\Hom_\sE(x,y)\ot_k\Hom_\sE(z,x)
 \lrarrow\Hom_\sE(x,z),
$$
which is $k$\+vector space dual to 
the composition in~\eqref{U-cogeneration-map}, is surjective.
 Hence the composition in~\eqref{U-cogeneration-map} is injective.

 Now injectivity of the map~\eqref{U-cogeneration-map} implies
surjectivity of the map obtained by applying the contravariant
functor $\Hom_k({-},\fP^z)$ to~\eqref{U-cogeneration-map}.
 Taking the direct product over $\{z\mid z\prec y\}$,
we conclude that the map
\begin{equation} \label{Hom-from-U-cogeneration-map}
 \prod\nolimits_{z\in\sE,\,x\in X_y}^{z\prec y}
 \Hom_k(\C_\sE^{x,y},\Hom_k(\C_\sE^{z,x},\fP^z))\lrarrow
 \prod\nolimits_{z\in\sE}^{z\prec y}
 \Hom_k(\C_\sE^{z,y},\fP^z)
\end{equation}
induced by the comultiplication in $\C_\sE$ is surjective.

 Finally, it remains to apply the contraassociativity axiom.
 The contraassociativity axiom for the $\C_\sE$\+contramodule $\fP$
describes the composition of
the map~\eqref{Hom-from-U-cogeneration-map} with the contraaction
map~\eqref{long-contraaction-map} as the composition of two maps
induced by the contraaction map~$\pi_\fP$.
 It follows the image of the map~\eqref{long-contraaction-map} is
contained in the image of the map~\eqref{X_y-contraaction-map},
as desired.
\end{proof}

 Informally speaking, Lemma~\ref{U-cogenerators-lemma} tells that
\emph{the image of the contraaction of the long morphisms in\/ $\fP$,
projected to\/ $\fP^y$, is contained in the image of the contraaction
of the morphisms from $U_y$, projected to\/~$\fP^y$}.
 But $X_y$ is only a finite set of objects, and $U_y$ is only
a finite-dimensional vector space.
 For this reason, the ``contraaction of the morphisms from~$U_y$''
in the $\C_\sE$\+contramodule $\fP$ is a part of the action of
the category $\sE$ in the $\sE$\+module~$P$.
 And we know that the map $\chi\:P\rarrow Q$ preserves
the $\sE$\+module structure.
 We also know that the contraaction of long morphisms in $\fQ$ vanishes;
so in particular the action of long morphisms in $Q$ vanishes.
 And all the morphisms from the objects $x\in X_y$ to
the object~$y$ are long.
 These arguments explain why
the composition~\eqref{long-contraaction-composed-with-chi-y-component}
has to vanish.

 More precisely, the composition in~\eqref{X_y-contraaction-map} is
isomorphic to the $\sE$\+action map
\begin{equation} \label{X_y-action-map}
 \bigoplus\nolimits_{x\in X_y}\Hom_\sE(x,y)\ot_k P(x)\lrarrow P(y).
\end{equation}
 Since the image of~\eqref{long-contraaction-map} is contained in
(actually, coincides with) the image of~\eqref{X_y-contraaction-map},
and since $\chi\:P\rarrow Q$ is a left $\sE$\+module morphism, it
follows that the image of
the composition~\eqref{long-contraaction-composed-with-chi-y-component}
is contained in the image of the composition
\begin{equation} \label{X_y-action-map-composed-with-chi(y)}
 \bigoplus\nolimits_{x\in X_y}\Hom_\sE(x,y)\ot_k P(x)\lrarrow P(y)
 \overset{\chi(y)}\lrarrow Q(y),
\end{equation}
which is contained in the image of the $\sE$\+action map
\begin{equation} \label{X_y-action-in-Q}
 \bigoplus\nolimits_{x\in X_y}\Hom_\sE(x,y)\ot_k Q(x)\lrarrow Q(y).
\end{equation}
 Finally, the image of~\eqref{X_y-action-in-Q} is contained in
the image of the $\D_\sE^\prec$\+contraaction map
\begin{equation} \label{long-contraaction-map-in-Q}
 \prod\nolimits_{x\in\sE}^{x\prec y}
 \Hom_k(\C_\sE^{x,y},\fQ^x)\overset{\rho_\fQ^y}\lrarrow\fQ^y,
\end{equation}
which we know to be zero by the preceding arguments in this proof.

 We have shown that
the composition~\eqref{long-contraaction-composed-with-chi-y-component}
vanishes, and consequently so does
the composition~\eqref{long-contraaction-composed-with-chi},
as claimed.
 Let $\fT\subset\fP$ be the image of the $\D_\sE^\prec$\+contraaction
map $\rho_\fP\:\Hom_k(\D_\sE^\prec,\fP)\rarrow\fP$.
 So $\fT$ is the maximal $\C_\sE$\+subcontramodule in $\fP$ with
the property that the $\C_\sE$\+contramodule structure on $\fP/\fT$
arises from a $\C_\sE^\sim$\+contramodule structure.
 We have shown that the $k$\+linear map $\chi\:\fP\rarrow\fQ$
factorizes as $\fP\rarrow\fP/\fT\rarrow\fQ$.
 By construction, $\fP\rarrow\fP/\fT$ is a $\C_\sE$\+contramodule
morphism.
 It remains to prove that $\fP/\fT\rarrow\fQ$ is
a $\C_\sE$\+contramodule morphism.

 Put $T=\Theta_\sE(\fT)$; so $T$ is a left $\sE$\+module.
 Applying the functor $\Theta_\sE$ to the surjective
$\C_\sE$\+contramodule morphism $\fP\rarrow\fP/\fT$, we obtain
a surjective $\sE$\+module morphism $P\rarrow P/T$.
 The $\sE$\+module morphism $\chi\:P\rarrow Q$ factorizes as
$P\rarrow P/T\rarrow Q$, where $P/T\rarrow Q$ is
an $\sE$\+module morphism.
 Now the $\sE$\+module structure on $P/T=\Theta_\sE(\fP/\fT)$
arises from a $\sE^\sim$\+module structure (in other words,
the action of the ideal of long morphisms vanishes in~$P/T$),
since the $\C_\sE$\+contramodule structure on $\fP/\fT$ arises
from a $\C_\sE^\sim$\+contramodule structure.
 Similarly, the $\sE$\+module structure on $Q$ arises from
a $\sE^\sim$\+module structure (i.~e., once again, the action
of the ideal of long morphisms vanishes in~$Q$).
 So $P/T\rarrow Q$ is a morphism of left $\sE^\sim$\+modules.

 At last, by Lemma~\ref{coalgebra-on-short-morphisms-lemma},
the functor $\Theta_{\sE^\sim}$ is an equivalence of categories;
in particular, it is fully faithful.
 Since $P/T\rarrow Q$ is a morphism of $\sE^\sim$\+modules,
it follows that $\fP/\fT\rarrow\fQ$ is a morphism of
$\C_\sE^\sim$\+contramodules.
 Thus $\fP/\fT\rarrow\fQ$ is a morphism of $\C_\sE$\+contramodules
as well.
 We can conclude that the composition of two $\C_\sE$\+contramodule
morphisms $\fP\rarrow\fP/\fT\rarrow\fQ$ is also
a $\C_\sE$\+contramodule morphism, finishing the proof of the theorem.
\end{proof}

\Section{Locally Finite Contramodules}
\label{locally-finite-contramodules-secn}

 The following definition is the contramodule version
of Definition~\ref{locally-finite-modules-and-comodules}.

\begin{defn} \label{locally-finite-contramodules-defn}
 Let $\sE$ be a locally finite $k$\+linear category.
 A left $\C_\sE$\+contramodule $\fP$ is said to be \emph{locally
finite} if the corresponding left $\sE$\+module $P=\Theta_\sE(\fP)$
(defined in Construction~\ref{from-contramodule-construction})
is locally finite.
 In other words, $\fP$ is locally finite if and only if the vector
space $\fP^x$ is finite-dimensional for every object $x\in\sE$.
 Locally finite right $\C_\sE$\+contramodules are defined similarly.
\end{defn}

 The aim of this section is to prove the following contramodule
version of Theorem~\ref{locally-finite-comodules-described}.

\begin{thm} \label{locally-finite-contramodules-described}
 Let\/ $\sE$ be a lower strictly locally finite $k$\+linear category
and $P\:\sE\rarrow k\vect$ be a locally finite left\/ $\sE$\+module.
 Then $P$ arises from some (locally finite) left\/
$\C_\sE$\+contramodule via the contramodule forgetful functor\/
$\Theta_\sE$ if and only if, for every fixed object $y\in\sE$,
the set of all objects $x\in\sE$ with nonzero action map
$$
 \Hom_\sE(x,y)\ot_k P(x)\lrarrow P(y)
$$
is finite.
\end{thm}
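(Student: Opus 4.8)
The plan is to prove the two implications by rather different means: the ``if'' direction reduces cleanly to the already-established comodule case by vector space duality, while the ``only if'' direction is the genuine content and is where left strict local finiteness must be used.

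For ``if'', suppose the locally finite left $\sE$\+module $P$ satisfies the finiteness condition. Since each $P(x)$ is finite-dimensional, dualization is an exact anti-equivalence between locally finite left and right $\sE$\+modules, so I may form the locally finite right $\sE$\+module $N$ with $N(x)=P(x)^*$ and action maps dual to those of $P$. Reading off these dual action maps, the hypothesis on $P$ (for each $y$, only finitely many $x$ with nonzero $\Hom_\sE(x,y)\ot_k P(x)\rarrow P(y)$) is exactly the hypothesis of Theorem~\ref{locally-finite-comodules-described} applied to the locally finite category $\sE^\sop$. Hence $N=\Upsilon_{\sE^\sop}(\N)$ for some locally finite right $\C_\sE$\+comodule $\N$ (a right $\C_\sE$\+comodule being the same as a left $\C_{\sE^\sop}$\+comodule). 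The construction recalled in Section~\ref{preliminaries-secn} now endows $\fP=\N^*=\Hom_k(\N,k)$ with a left $\C_\sE$\+contramodule structure, still locally finite because $\fP^x=(\N_x)^*=P(x)$. A direct check that the two forgetful functors intertwine dualization, $\Theta_\sE(\N^*)\simeq\Upsilon_{\sE^\sop}(\N)^*$, then yields $\Theta_\sE(\fP)\simeq N^*=P$. I expect this half to be routine; note that it uses only local finiteness.

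The ``only if'' direction is the crux, and left strict local finiteness is indispensable here: Example~\ref{contra-ff-counterex} already produces a locally finite contramodule violating the conclusion over a merely locally finite category. So let $\fP$ be locally finite, $P=\Theta_\sE(\fP)$, and fix $y$. By interval finiteness $\{x\mid x\sim y\}$ is finite, so it suffices to bound the set of $x\prec y$ with nonzero action into $\fP^y$; assume for contradiction this set is infinite. Definition~\ref{left-strict-locally-finite} lets me factor every long morphism into $y$ through the finite set $X_y$, and a nonzero action $\Hom_\sE(x,y)\ot_k\fP^x\rarrow\fP^y$ then forces, for some $x_i\in X_y$, both a nonzero action $\Hom_\sE(x,x_i)\ot_k\fP^x\rarrow\fP^{x_i}$ and a nonzero action $\Hom_\sE(x_i,y)\ot_k\fP^{x_i}\rarrow\fP^y$. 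A pigeonhole argument over the finite sets $X_\bullet$ therefore produces an infinite descending chain $y=x_0\succ x_1\succ x_2\succ\cdots$ along which every consecutive action map $\fP^{x_i}\rarrow\fP^{x_{i-1}}$ is nonzero.

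Against such a chain I would derive a contradiction from contraassociativity, in the spirit of the contramodule Nakayama Lemma~\ref{nakayama}(b) for the conilpotent coalgebra $\D_\sE^\prec$ of Remark~\ref{coalgebras-on-short-and-long-morphisms-remark}. The guiding model is the rank-one computation on a pure chain: writing the relevant component of the contraaction as a linear functional $F$ on $\prod_{x\prec x_0}\Hom_k(\C_\sE^{x,x_0},\fP^x)$ and feeding the identity $\pi\circ\Hom_k(\mu,\fP)=\pi\circ\Hom_k(\C_\sE,\pi)$ test elements supported on consecutive edges, one gets a relation $F\circ(A-B)=0$ in which, because all consecutive maps are nonzero, the operator $A-B$ is surjective onto the entire product; this forces $F=0$ and contradicts counitality. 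The main obstacle, as I see it, is promoting this computation to a general $\sE$: one must organize the test elements so that the comultiplication along the chain reproduces the solvable recursion, and it is precisely here that finite-dimensionality of the spaces $\fP^x$ together with the conilpotency of $\D_\sE^\prec$ (making the action of long morphisms topologically nilpotent, so that the relevant operator is invertible) have to be brought to bear. Once the finiteness condition is secured, the ``if'' direction and the full faithfulness of $\Theta_\sE$ (Theorem~\ref{main-full-and-faithfulness-theorem}) identify $\fP$ itself with the dual of a locally finite right $\C_\sE$\+comodule.
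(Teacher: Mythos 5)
Your ``if'' half is correct, and it takes a genuinely different route from the paper's. You deduce it from the comodule description (Theorem~\ref{locally-finite-comodules-described}, in the form of Proposition~\ref{locally-finite-right-comodules}) by dualizing $P$ to the locally finite right $\sE$\+module $N=P^*$ and using the commutativity of the square~\eqref{dualization-square}; this is legitimate and non-circular. The paper instead constructs the contraaction on $\fP=\prod_{x\in\sE}P(x)$ directly, by collapsing $\prod_{x\in\sE}\Hom_k(\C_\sE^{x,y},\fP^x)$ onto the finite subproduct indexed by $Z_y$, as in~\eqref{contraaction-constructed}. The direct construction buys a stronger statement, valid with no local finiteness assumption on $P$ and no strictness assumption on $\sE$, whereas your argument genuinely needs $P^{**}\simeq P$; for the theorem as stated the difference is immaterial.

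The ``only if'' half, however, has a fatal gap: the configuration you propose to refute is not refutable. After the pigeonhole you retain only an infinite descending chain $y=x_0\succ x_1\succ x_2\succ\dotsb$ with $x_{i+1}\in X_{x_i}$ and all consecutive action maps nonzero, and you plan to contradict its existence by a Nakayama-type computation. But such chains occur inside genuine locally finite $\C_\sE$\+contramodules. Let $\sE$ have objects the nonpositive integers, with $\Hom_\sE(n,n)=k\,\id_n$, \ $\Hom_\sE(n-1,n)=kf_n$, and all other $\Hom$ spaces zero (so that $f_{n+1}f_n=0$ automatically); this category is left strictly locally finite, with $X_n=\{n-1\}$. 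Put $P(n)=k^2$ for all~$n$, and let every $P(f_n)$ be one and the same nonzero operator~$e$ with $e^2=0$. This is a well-defined locally finite left $\sE$\+module, and it is contrafinite in the sense of Definition~\ref{contrafinite-module} for the trivial reason that only $x\in\{n,n-1\}$ admit nonzero morphisms into~$n$; so by the ``if'' implication $P=\Theta_\sE(\fP)$ for a locally finite left $\C_\sE$\+contramodule~$\fP$. Yet $P$ contains exactly your forbidden configuration: the chain $0\succ-1\succ-2\succ\dotsb$ with every consecutive action map nonzero. Hence no argument whatsoever can derive a contradiction from the chain alone. Your reduction has discarded the actual hypothesis, namely that infinitely many objects~$x$ act nonzero \emph{directly} into the single object~$y$ (nonzero $\Hom_\sE(x,y)\ot_k P(x)\rarrow P(y)$), not merely stepwise along a chain --- in the example above all the relevant composites vanish. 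Retaining the full pigeonhole data (infinitely many $x$ acting nonzero into each~$x_i$) would not help either: that is the original problem relocated to every~$x_i$, and the proposed computation along the chain never engages it.

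The endgame of your sketch is also incoherent on its own terms: showing that the relevant functional $F$ vanishes on the long part $\prod_{x\prec y}\Hom_k(\C_\sE^{x,y},\fP^x)$ would contradict nothing, since counitality only concerns the components $\C_\sE^{x,x}$, and vanishing of the contraaction of long morphisms is precisely what happens for contramodules induced from $\C_\sE^\sim$\+contramodules. The paper's actual proof has an entirely different shape: contrafinite modules are closed under extensions (Lemma~\ref{contrafinite-extension-closed}); local finiteness of $P$ yields a unique minimal ``big'' submodule $K\subset P$ (Lemma~\ref{infinite-intersection-in-locally-finite}); the surjection $P\rarrow P/K$ is lifted to contramodules via full-and-faithfulness (Theorem~\ref{main-full-and-faithfulness-theorem}); and minimality of $K$ is contradicted using the Nakayama-based Lemma~\ref{proper-big-subcontramodule}. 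None of this machinery is replaceable by a computation along a single chain.
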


 Notice that the functor $\Theta_\sE$ is fully faithful in
the assumptions of
Theorem~\ref{locally-finite-contramodules-described},
by Theorem~\ref{main-full-and-faithfulness-theorem}.
 So the $\C_\sE$\+contramodule $\fP$ such that $\Theta_\sE(\fP)
\simeq P$ is unique if it exists.
 Let us also emphasize that the lower strict local finiteness
assumption on the $k$\+linear category $\sE$ \emph{cannot} be simply
dropped or replaced by the local finiteness in
Theorem~\ref{locally-finite-contramodules-described}.
 Indeed, \emph{every} left module $P$ over the $k$\+linear category
$\sE$ from Example~\ref{contra-ff-counterex} belongs to
the essential image of the functor $\Theta_\sE$, but $P$ need not
satisfy the condition of
Theorem~\ref{locally-finite-contramodules-described}
for the object $y=-1$.

\begin{proof}[Proof of the ``if'' implication in
Theorem~\ref{locally-finite-contramodules-described}]
 This implication requires \emph{neither} the lower strict local
finiteness assumption on $\sE$, \emph{nor} the local finiteness
assumption on~$P$.
 Let $P$ be a (not necessarily locally finite) left
$\sE$\+module such that for every object $y\in\sE$ there is
a finite subset of objects $Z_y\subset\sE$ with the property that
the action map $\Hom_\sE(z,y)\ot_k P(z)\rarrow P(y)$ vanishes
for all $z\in\sE$, \,$z\notin Z_y$.
 Put $\fP^x=P(x)$ and $\fP=\prod_{x\in\sE}\fP^x$, and define
the contraaction map $\pi\:\Hom_k(\C_\sE,\fP)\rarrow\fP$ by
the following rule.
 The map~$\pi$ consists of the components
$$
 \pi^y\:\prod\nolimits_{x\in\sE}\Hom_k(\C_\sE^{x,y},\fP^x)
 \lrarrow\fP^y, \qquad y\in\sE
$$
as in~\eqref{pi-y-formula}.
 The map~$\pi^y$ is the composition
\begin{multline} \label{contraaction-constructed}
 \prod\nolimits_{x\in\sE}\Hom_k(\C_\sE^{x,y},\fP^x)
 \lrarrow\prod\nolimits_{z\in Z_y}\Hom_k(\C_\sE^{z,y},\fP^z) \\
 \,\simeq\,\bigoplus\nolimits_{z\in Z_y}
 \Hom_\sE(z,y)\ot_k P(z)\lrarrow P(y).
\end{multline}
 Here the leftmost map in~\eqref{contraaction-constructed} is
the direct summand projection from the product of the components
indexed by all the objects of $\sE$ to the product of
the components indexed by the subset $Z_y\subset\sE$.
 The rightmost map in~\eqref{contraaction-constructed} is provided
by the action of $\sE$ in~$P$.
 The middle isomorphism holds since $\Hom_\sE(z,y)\simeq
\bigl(\C_\sE^{z,y}\bigr)^*$ is a finite-dimensional vector space
and the set $Z_y$ is finite.
 This defines a left $\C_\sE$\+contramodule structure on $\fP$
such that $\Theta_\sE(\fP)=P$.
\end{proof}

 Before proceeding to prove the ``only if'' implication, let us
introduce a further series of combinatorial definitions and
formulate some lemmas.

\begin{defn}
 Let $\sE$ be a locally finite $k$\+linear category and $x$, $y\in\sE$
be a pair of objects such that $x\preceq y$.
 The \emph{distance} $d(x,y)$ from $x$ to~$y$ is defined as
the supremum of the set of all integers $n\ge0$ for which there
exists a sequence of objects $z_1$,~\dots, $z_{n-1}\in\sE$ such that
$x\prec z_1\prec z_2\prec\dotsb\prec z_{n-1}\prec y$.
 In particular, $d(x,y)\ge1$ whenever $x\prec y$.
 We put $d(x,y)=0$ if $x\sim y$.

 The point of this definition is that $d(x,y)$ is a finite integer
for every pair of objects $x\preceq y$, because the set of all
objects $\{z\mid x\preceq z\preceq y\}$ is finite by
Definition~\ref{locally-finite-category}(ii).
\end{defn}

\begin{defn}
 Let $\sE$ be a locally finite $k$\+linear category and $y\in\sE$
be an object.
 Let $X\subset\sE$ be a finite subset in $\sE$ such that $x\prec y$
for all $x\in X$.
 We will say that $X$ is a \emph{frontier} for~$y$ if there is
a finite subset of objects $W\subset\sE$ such that, for for every
morphism $f\in\Hom_\sE(z,y)$ with $z\prec y$ and $z\notin W$,
there exist two finite families of morphisms $g_i\: z\rarrow x_i$
and $h_i\:x_i\rarrow y$, \ $1\le i\le n$, \ $n\ge0$,
\ $x_i\in X$, for which $f=\sum\nolimits_{i=1}^n h_ig_i$.
 The finite subset of objects $W\subset\sE$ will be called
the \emph{frontier exception} for a frontier $X$ of an object~$y$.
\end{defn}

 In particular, Definition~\ref{lower-strict-locally-finite} can be
restated by saying that a locally finite $k$\+linear category $\sE$
is lower strictly locally finite if and only every object $y\in\sE$
admits a frontier.
 To be more precise, Definition~\ref{lower-strict-locally-finite}
requires existence of a frontier with an empty frontier exception.
 An empty exception can be always achieved by expanding the frontier.
 For any frontier $X$ of an object~$y$ with a frontier exception $W$,
the union $X\cup(W\cap\{z\in\sE\mid z\prec y\})$ is a frontier
for~$y$ with an empty frontier exception.

\begin{defn}
 Let $\sE$ be a lower strictly locally finite $k$\+linear category.
 Suppose chosen a finite subset of objects $X_y\subset\sE$ for every
object $y\in\sE$ as per Definition~\ref{lower-strict-locally-finite}.
 Let us call $X_y$ the \emph{standard frontier} of~$y$.

 Put $X_y^{(1)}=X_y$.
 Proceeding by induction on $n\ge1$, set
$X_y^{(n+1)}=\bigcup_{z\in X_y^{(n)}} X_z$.
 So $X_y^{(n)}$ is a finite set of objects in $\sE$ for every
object $y\in\sE$.
 Let us call $X_y^{(n)}$ the \emph{degree~$n$ standard frontier}
of~$y$.
\end{defn}

\begin{lem} \label{degree-n-standard-frontier}
 Our terminology is consistent: for any object $y\in\sE$ and any
integer $n\ge1$, the degree~$n$ standard frontier $X_y^{(n)}$ is
a frontier for~$y$.
\end{lem}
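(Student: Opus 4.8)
The plan is to argue by induction on $n\ge1$. The base case $n=1$ is exactly the statement that the standard frontier $X_y=X_y^{(1)}$ is a frontier for~$y$ (with empty exception), which holds by the choice of standard frontiers made in accordance with Definition~\ref{left-strict-locally-finite}. Before running the induction I would record the easy point that every $u\in X_y^{(n+1)}$ satisfies $u\prec y$: indeed $u\in X_z$ for some $z\in X_y^{(n)}$, so $u\prec z$ by the definition of the standard frontier, while $z\prec y$ by the inductive hypothesis; then $u\prec y$ follows from transitivity of~$\preceq$ together with the observation that $y\preceq u$ would force $y\preceq z$, contradicting $z\prec y$.

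For the inductive step I would assume $X_y^{(n)}$ is a frontier for~$y$ with some finite frontier exception~$W$, and construct a frontier exception for $X_y^{(n+1)}=\bigcup_{z\in X_y^{(n)}}X_z$. The natural recipe is a two-stage factorization: given $f\in\Hom_\sE(z,y)$ with $z\prec y$ and $z\notin W$, first write $f=\sum_i h_ig_i$ with $g_i\:z\rarrow x_i$, \ $h_i\:x_i\rarrow y$, and $x_i\in X_y^{(n)}$ by the inductive hypothesis; then factor each $g_i$ through the standard frontier $X_{x_i}\subset X_y^{(n+1)}$ as $g_i=\sum_j b_{ij}a_{ij}$ with $a_{ij}\:z\rarrow u_{ij}$, \ $b_{ij}\:u_{ij}\rarrow x_i$, and $u_{ij}\in X_{x_i}$. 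Composing, $f=\sum_{i,j}(h_ib_{ij})\,a_{ij}$ is a factorization of~$f$ through the objects $u_{ij}\in X_y^{(n+1)}$, exactly of the shape demanded by the definition of a frontier.

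The hard part --- really the only subtlety --- is that the second factorization needs $z\prec x_i$ \emph{strictly}, since the standard frontier $X_{x_i}$ only factors morphisms emanating from objects lying strictly below~$x_i$. After discarding the vanishing terms one has $z\preceq x_i$ for each surviving~$i$, but possibly $z\sim x_i$, in which case $g_i$ is a short morphism that cannot be pushed onto $X_{x_i}$. My remedy is to absorb these bad objects into the frontier exception by setting $W'=W\cup\bigcup_{x\in X_y^{(n)}}\{z\in\sE\mid z\sim x\}$. Each set $\{z\mid z\sim x\}$ equals $\{z\mid x\preceq z\preceq x\}$ and is therefore finite by the interval-finiteness condition of Definition~\ref{locally-finite-category}(ii); since $X_y^{(n)}$ is finite, $W'$ is finite. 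For $z\prec y$ with $z\notin W'$ one has $z\notin W$, so the first factorization applies, and for each surviving term $z\preceq x_i$ together with $z\not\sim x_i$ forces $z\prec x_i$, so the second factorization applies as well. Thus $W'$ is a valid frontier exception for $X_y^{(n+1)}$, which completes the induction.
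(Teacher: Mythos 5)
Your proof is correct and follows essentially the same route as the paper: an induction whose step shows that if $X$ is a frontier for~$y$ with exception $W$, then $\bigcup_{z\in X}X_z$ is a frontier with exception $W\cup X^\sim$, where $X^\sim$ is the $\sim$\nobreakdash-saturation of~$X$, finite by interval finiteness (Definition~\ref{locally-finite-category}(ii)). You merely spell out the details the paper leaves implicit --- the two-stage factorization, the verification that elements of $X_y^{(n+1)}$ lie strictly below~$y$, and the need to exile objects $\sim$\nobreakdash-equivalent to members of $X_y^{(n)}$ into the exception set --- all of which are handled correctly.
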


\begin{proof}
 Quite generally, if $X$ is a frontier for~$y$ with a frontier
exception $W$, then $\bigcup_{z\in X}X_z$ is also a frontier for~$y$
with the frontier exception $W\cup X^\sim$, where $X^\sim$ is
the set of all objects of $\sE$ that are $\sim$\+equivalent to
some objects from~$X$.
 Since the $\sim$\+equivalence classes are finite by
Definition~\ref{lower-strict-locally-finite}(ii), the set $X^\sim$
is finite.
 Hence $X_y^{(n)}$ is a frontier for~$y$ with a frontier exception
$\bigcup_{i=1}^{n-1} X_y^{(n)\,\sim}$.
\end{proof}

 The following definition assigns a name to the class of left
$\sE$\+modules we are interested in.

\begin{defn} \label{contrafinite-module}
 Let\/ $\sE$ be a locally finite $k$\+linear category and $P$ be
a left\/ $\sE$\+module.
 We will say that $P$ is \emph{contrafinite} if it satisfies
the condition of
Theorem~\ref{locally-finite-contramodules-described}.
 To repeat, this means that, for every object $y\in\sE$, there
exists a finite subset of objects $A_{P,y}\subset\sE$ such that,
for all objects $x\in\sE$, \ $x\notin A_{P,y}$, the action map
$\Hom_\sE(x,y)\ot_k P(x)\rarrow P(y)$ vanishes.
\end{defn}

 It is clear that the class of all contrafinite left $\sE$\+modules
is closed under submodules, quotients, and finite direct sums.
 The next lemma plays a key role.

\begin{lem} \label{contrafinite-extension-closed}
 Let\/ $\sE$ be a lower strictly locally finite $k$\+linear category.
 Then the class of all contrafinite left\/ $\sE$\+modules is closed
under extensions in\/ $\sE\Modl$. 
\end{lem}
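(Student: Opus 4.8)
The plan is to fix a short exact sequence $0\to P'\to P\to P''\to 0$ of left $\sE$\+modules with $P'$ and $P''$ contrafinite, fix an object $y\in\sE$, and produce a finite set $A_{P,y}$ outside of which the action into~$y$ vanishes. Sources $x$ with $x\sim y$ contribute only finitely, since $\sim$\+equivalence classes are finite; so the whole problem concerns long morphisms, i.e.\ sources $x\prec y$. For such an $x$ lying outside the finite set $A_{P',y}\cup A_{P'',y}$, the commutative diagram with exact rows relating the action maps of $P'$, $P$, $P''$ shows that $\Hom_\sE(x,y)\ot_k P(x)\to P(y)$ takes values in the image of $P'(y)$ (because the induced map on $P''$ vanishes) and annihilates $\Hom_\sE(x,y)\ot_k P'(x)$ (because the map on $P'$ vanishes). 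Hence it descends to a \emph{failure map} $\bar a_{x,y}\:\Hom_\sE(x,y)\ot_k P''(x)\to P'(y)$, and $x\in A_{P,y}$ precisely when $\bar a_{x,y}\neq0$. Thus it suffices to bound the set of $x\prec y$ with $\bar a_{x,y}\neq0$.

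The engine of the argument is the frontier factorization. Using the defining property of the standard frontier $X_y$ (Definition~\ref{left-strict-locally-finite}), every $f\in\Hom_\sE(x,y)$ with $x\prec y$ factors through $X_y$. For $x$ outside the finite exceptional set $E_y:=A_{P',y}\cup A_{P'',y}\cup\bigcup_{w\in X_y}(A_{P',w}\cup A_{P'',w})$, this yields an image containment
\[
 \im(\bar a_{x,y})\ \subseteq\ \sum_{w\in X_y}a'_{w,y}\bigl(\Hom_\sE(w,y)\ot_k\im(\bar a_{x,w})\bigr),
\]
where $a'_{w,y}$ denotes the $P'$\+action: contrafiniteness of $P''$ at the frontier objects forces the $P''$\+images of the relevant vectors to vanish (so the partial action $g_w v$ lands in $P'(w)$), while contrafiniteness of $P'$ and $P''$ at each $w\in X_y$ makes $\bar a_{x,w}$ meaningful. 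Working with images rather than individual vectors is what sidesteps the cancellation in the frontier-factorized sum.

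Next I would iterate this containment. Each step replaces the target $y$ by a frontier object $w\prec y$; since the interval $\{z\mid x\preceq z\preceq y\}$ is finite, the distance $d(x,\cdot)$ strictly decreases and the recursion terminates (the degree~$n$ standard frontiers $X_y^{(n)}$ of Lemma~\ref{degree-n-standard-frontier} organize this iteration). Following a nonzero contribution down to a terminal object $w$, the composite of the intervening $P'$\+actions is the $P'$\+action of a single morphism in $\Hom_\sE(w,y)$, and it must be nonzero for the contribution to survive; by contrafiniteness of $P'$ at~$y$ this forces $w\in A_{P',y}$. At a terminal object the recursion stops because $x\in E_w$ (or $x\sim w$). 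Assembling this, $\bar a_{x,y}\neq0$ implies $x\in E_w$ or $x\sim w$ for some $w\in A_{P',y}\cup\{y\}$, whence
\[
 A_{P,y}\ \subseteq\ \{x\mid x\sim y\}\cup E_y\cup\bigcup_{w\in A_{P',y}}\bigl(E_w\cup\{x\mid x\sim w\}\bigr),
\]
which is finite because $A_{P',y}$ is finite and each $E_w$ and each $\sim$\+class is finite.

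The main obstacle is the coordination of the two contrafiniteness hypotheses, which amounts to a \emph{single-switch} phenomenon: an extension's action into~$y$ that lands in the submodule~$P'$ must factor as a $P''$\+dynamics carrying $x$ up to some intermediate object, a single switch provided by the extension class, and a $P'$\+dynamics carrying that object down to~$y$; contrafiniteness of~$P''$ bounds the first leg and contrafiniteness of~$P'$ the second. Making this precise requires passing to images to control cancellation in the frontier sums, and using interval-finiteness to make $d(\cdot,y)$ a well-founded induction measure, so that the iteration terminates at objects pinned down inside the finite set $A_{P',y}$ rather than accumulating into an infinite union of finite sets. Once the containment above is established, the verification that the class of contrafinite modules is closed under extensions is immediate.
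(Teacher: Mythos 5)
Your proof is correct: the failure maps $\bar a_{x,w}\:\Hom_\sE(x,w)\ot_k P''(x)\rarrow P'(w)$ are well defined exactly when $x$ avoids $A_{P',w}\cup A_{P'',w}$, your one-step image containment holds under the hypothesis $x\notin E_y$, the substitution step works by associativity of the $P'$\+action, the recursion terminates because relevant objects lie in the finite interval between $x$ and $y$ and strictly decrease under $\prec$, and a surviving terminal contribution indeed forces a nonzero $P'$\+action map into $y$, hence a terminal object in $A_{P',y}$. However, your route is genuinely different from the paper's. The paper (for an extension $0\rarrow P\rarrow T\rarrow Q\rarrow0$) chooses two depths $n<m$ so large that the degree~$n$ standard frontier $X_y^{(n)}$ misses $A_{P,y}\cup A_{Q,y}$ and the degree~$m$ one misses $\bigcup_{x\in X_y^{(n)}}(A_{P,x}\cup A_{Q,x})$, sets $A_{T,y}=\{y\}^\sim\cup W_y^{(m)}$, and factors every long morphism from outside this set through $X_y^{(n)}$ in such a way that \emph{both} factors act by zero on \emph{both} $P$ and $Q$; each composite then kills $T$ because the product of two strictly upper triangular block matrices is zero. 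You never force the second leg to vanish: you track the image of the composite as a $P'$\+action applied to a failure image, and use contrafiniteness of $P'$ at~$y$ as the final pin. Consequently you work only with the one-step frontiers $X_w$ of Definition~\ref{left-strict-locally-finite} (with empty exceptions), avoiding the degree~$n$ frontiers, their exceptions $W_y^{(n)}$, and any choice of depth parameters, at the cost of the bookkeeping of image containments along a recursion tree; the paper's depth choices buy a one-line conclusion with no such bookkeeping, and a simpler description of the resulting finite set, whereas your bound $\{x\mid x\sim y\}\cup E_y\cup\bigcup_{w\in A_{P',y}}\bigl(E_w\cup\{x\mid x\sim w\}\bigr)$ is explicit in terms of the contrafiniteness data of the two ends. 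Both arguments rest on the same pillars (frontier factorization, interval finiteness for termination, finiteness of $\sim$\+classes, and the upper-triangular structure of the action of an extension), so the difference is precisely in how the two contrafiniteness hypotheses are brought to bear: dodging the bad sets by going deep, versus running into them and reading off membership in $A_{P',y}$.
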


\begin{proof}
 Let $0\rarrow P\rarrow T\rarrow Q\rarrow0$ be a short exact sequence
of left $\sE$\+modules.
 Assume that the $\sE$\+modules $P$ and $Q$ are contrafinite.
 We have to prove that the $\sE$\+module $T$ is contrafinite as well.
 Given an object $y\in\sE$, we need to establish existence of a finite
subset of objects $A_{T,y}\subset\sE$.

 Consider the degree~$n$ standard frontier $X_y^{(n)}\subset\sE$,
and let $W_y^{(n)}\subset\sE$ be its frontier exception constructed
in the proof of Lemma~\ref{degree-n-standard-frontier}.
 For any object $x\in X_y^{(n)}$, the distance $d(x,y)$ is greater
than or equal to~$n$.
 As the distance between a pair of objects $u$ and~$v$
with $u\preceq v$ is always finite, we can choose $n$~large enough
so that \emph{no object of $A_{P,y}\cup A_{Q,y}$ belongs to
$X_y^{(n)}$}.
 Then, for any object $x\in X_y^{(n)}$, both the maps
$\Hom_\sE(x,y)\ot_k P(x)\rarrow P(y)$ and
$\Hom_\sE(x,y)\ot_k Q(x)\rarrow Q(y)$ vanish.
 It follows that for any object $z\prec y$ with $z\notin W_y^{(n)}$,
both the maps $\Hom_\sE(x,y)\ot_k P(x)\rarrow P(y)$ and
$\Hom_\sE(x,y)\ot_k Q(x)\rarrow Q(y)$ vanish; but we will not need
to use this fact.

 Now consider the degree~$m$ standard frontier $X_y^{(m)}\subset\sE$
with $m>n$.
 The set $B=\bigcup_{x\in X_y^{(n)}}(A_{P,x}\cup A_{Q,x})$ is finite.
 For the same reason as above, we can choose~$m$ large enough so that
\emph{no object of $B$ belongs to $X_y^{(m)}$}.
 Then, for any objects $x\in X_y^{(n)}$ and $z\in X_y^{(m)}$, both
the maps $\Hom_\sE(z,x)\ot_k P(z)\rarrow P(x)$ and
$\Hom_\sE(z,x)\ot_k Q(z)\rarrow Q(x)$ vanish.
 It follows that for any objects $x\in X_y^{(n)}$ and $u\prec x$
with $u\notin W_x^{(m-n)}$, both the maps
$\Hom_\sE(u,x)\ot_k P(u)\rarrow P(x)$ and
$\Hom_\sE(u,x)\ot_k Q(u)\rarrow Q(x)$ vanish; now this fact is
relevant for us.

 Put $A_{T,y}=\{y\}^\sim\cup W_y^{(m)}$, where $\{y\}^\sim\subset\sE$
is the finite set of all objects $\sim$\+equivalent to~$y$.
 Given an object $u\notin A_{T,y}$ and a nonzero morphism
$f\in\Hom_\sE(u,y)$, we have to show that the map
$T(f)\:T(u)\rarrow T(y)$ vanishes.
 Notice that we necessarily have $u\prec y$ and $u\notin W_y^{(n)}$.
 Since $X_y^{(n)}$ is a frontier for~$y$ with a frontier exception
$W_y^{(n)}$ by Lemma~\ref{degree-n-standard-frontier}, the map~$f$
can be represented as
$$
 f=\sum\nolimits_{i=1}^r h_ig_i,
$$
where $g_i\in\Hom_\sE(u,x_i)$ and $h_i\in\Hom_\sE(x_i,y)$, \
$1\le i\le r$, \ $r\ge1$, and $x_i\in X_y^{(n)}$.
 Without loss of generality we can assume that $g_i\ne0$ for
every $1\le i\le r$.

 For a given index~$i$, \ $1\le i\le r$, put $g=g_i$, \,$h=h_i$
and $x=x_i$.
 The existence of a nonzero morphism $g\:u\rarrow x$ and
the condition that $u\notin W_y^{(m)}\supset X_y^{(n)\,\sim}$
imply that $u\prec x$.
 Since $W_x^{(m-n)}\subset W_y^{(m)}$, we also have $u\notin
W_x^{(m-n)}$.
 Therefore, both the maps $P(g)\:P(u)\rarrow P(x)$ and
$Q(g)\:Q(u)\rarrow Q(x)$ vanish.
 So do both the maps $P(h)\:P(x)\rarrow P(y)$ and $Q(h)\:
Q(x)\rarrow Q(y)$, as we have seen in the second paragraph
of this proof.

 Finally, we can conclude that the map $T(hg)\:T(u)\rarrow T(y)$
vanishes, as the product of any two strictly uppertriangular
$2\times 2$ block matrices is zero.
 Thus $T(f)=\sum_{i=1}^r T(h_ig_i)=0$, as desired.
\end{proof}

 Now the following sequence of lemmas leads to the proof of
the remaining (main) implication
in Theorem~\ref{locally-finite-contramodules-described}.
 To begin with, here is another piece of auxiliary terminology.
 Given a left $\sE$\+module $P$, we will say that
an $\sE$\+submodule $Q\subset P$ is \emph{big} if the quotient
$\sE$\+module $P/Q$ is contrafinite.

\begin{lem} \label{finite-intersection-big}
 Let\/ $\sE$ be a locally finite $k$\+linear category and $P$ be
a left\/ $\sE$\+module.
 Let $Q'$ and $Q''\subset P$ be two big $\sE$\+submodules in~$P$.
 Then the intersection $Q'\cap Q''$ is also a big\/ $\sE$\+submodule
in~$P$.
\end{lem}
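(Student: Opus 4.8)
The plan is to reduce the statement to the elementary closure properties of contrafinite modules recorded in the text just before Lemma~\ref{contrafinite-extension-closed}, namely closure under submodules and under finite direct sums. It is worth emphasizing at the outset that, unlike Lemma~\ref{contrafinite-extension-closed}, the present lemma assumes only local finiteness of $\sE$ (not left strict local finiteness); accordingly the extension-closure property will \emph{not} be invoked, and the argument will be purely formal.

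First I would form the natural $\sE$\+module homomorphism
$$
 P/(Q'\cap Q'')\lrarrow(P/Q')\oplus(P/Q'')
$$
induced by the two quotient projections $P\rarrow P/Q'$ and $P\rarrow P/Q''$, that is, the map sending the class of an element $p\in P$ to the pair of classes of~$p$ in $P/Q'$ and $P/Q''$. The composite map $P\rarrow(P/Q')\oplus(P/Q'')$ has kernel exactly $Q'\cap Q''$, so the induced map on the quotient $P/(Q'\cap Q'')$ is injective. This realizes $P/(Q'\cap Q'')$ as an $\sE$\+submodule of the direct sum $(P/Q')\oplus(P/Q'')$.

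Next, since $Q'$ and $Q''$ are big by hypothesis, the quotient $\sE$\+modules $P/Q'$ and $P/Q''$ are contrafinite in the sense of Definition~\ref{contrafinite-module}. As the class of contrafinite left $\sE$\+modules is closed under finite direct sums, the module $(P/Q')\oplus(P/Q'')$ is contrafinite. Finally, $P/(Q'\cap Q'')$ is a submodule of this contrafinite module, and contrafiniteness is inherited by submodules; hence $P/(Q'\cap Q'')$ is contrafinite. By definition this says precisely that $Q'\cap Q''$ is a big $\sE$\+submodule of~$P$, which is the assertion of the lemma.

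I anticipate no real obstacle here: the only points requiring a moment's care are the verification that the kernel of $P\rarrow(P/Q')\oplus(P/Q'')$ is exactly $Q'\cap Q''$, which is immediate from the definition of the two projections, and the observation that contrafiniteness is an objectwise finitely-supported vanishing condition on action maps, so that its stability under submodules and finite direct sums is the routine fact already noted in the text preceding Lemma~\ref{contrafinite-extension-closed}.
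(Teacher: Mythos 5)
Your proof is correct and follows exactly the paper's argument: embed $P/(Q'\cap Q'')$ into $(P/Q')\oplus(P/Q'')$ and invoke closure of contrafinite modules under finite direct sums and submodules. The only difference is that you spell out the injectivity verification that the paper leaves implicit, which is fine.
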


\begin{proof}
 The quotient module $P/(Q'\cap Q'')$ is a submodule of the direct
sum $P/Q'\oplus P/Q''$.
 So the assertion follows from the facts that the class of all
contrafinite left $\sE$\+modules is closed under submodules and
finite direct sums in $\sE\Modl$.
\end{proof}

\begin{lem} \label{infinite-intersection-in-locally-finite}
 Let\/ $\sE$ be a locally finite $k$\+linear category and $P$ be
a locally finite left\/ $\sE$\+module.
 Let $(Q_\xi\subset P)_{\xi\in\Xi}$ be a family of big\/
$\sE$\+submodules in~$P$.
 Then the intersection\/ $\bigcap_{\xi\in\Xi}Q_\xi$ is also a big\/
$\sE$\+submodule in~$P$.
\end{lem}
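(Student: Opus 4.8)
The plan is to reduce the infinite intersection to a finite subintersection, using the hypothesis that $P$ is locally finite. Write $Q=\bigcap_{\xi\in\Xi}Q_\xi$; I must show that the quotient $\sE$\+module $P/Q$ is contrafinite. The tempting approach of embedding $P/Q$ into the product $\prod_{\xi}P/Q_\xi$ fails at once, since contrafiniteness is preserved only by submodules, quotients, and \emph{finite} direct sums (as noted after Definition~\ref{contrafinite-module}), not by infinite products. This is the main obstacle, and the place where local finiteness of $P$ is indispensable: without it the conclusion would be false.

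Fix an object $y\in\sE$. Recall that submodules and quotients in $\sE\Modl$ are computed objectwise, so $Q(y)=\bigl(\bigcap_{\xi}Q_\xi\bigr)(y)=\bigcap_{\xi\in\Xi}Q_\xi(y)$ inside $P(y)$, and $(P/Q)(y)=P(y)/Q(y)$. The key observation is that, for any object $x$, the action map $\Hom_\sE(x,y)\ot_k(P/Q)(x)\rarrow(P/Q)(y)$ vanishes if and only if the image of the action map $\Hom_\sE(x,y)\ot_k P(x)\rarrow P(y)$ is contained in the subspace $Q(y)\subset P(y)$; indeed the induced map sends $f\ot\bar v$ to the class of $P(f)(v)$, which is zero in $P(y)/Q(y)$ precisely when $P(f)(v)\in Q(y)$. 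Since $P$ is locally finite, $P(y)$ is finite-dimensional, and an arbitrary intersection of subspaces of a finite-dimensional vector space coincides with the intersection of a finite subfamily (pass to a partial intersection of minimal dimension). Hence there is a \emph{finite} subset $\{\xi_1,\dots,\xi_m\}\subset\Xi$ with $Q(y)=Q_{\xi_1}(y)\cap\dotsb\cap Q_{\xi_m}(y)$.

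Now containment of the image of $\Hom_\sE(x,y)\ot_k P(x)\rarrow P(y)$ in $Q(y)$ is equivalent to its containment in each $Q_{\xi_j}(y)$. Applying the key observation to the quotients $P/Q_{\xi_j}$, I conclude that the action map into $(P/Q)(y)$ from $x$ vanishes if and only if all the action maps $\Hom_\sE(x,y)\ot_k(P/Q_{\xi_j})(x)\rarrow(P/Q_{\xi_j})(y)$ vanish for $1\le j\le m$. Each $Q_{\xi_j}$ is big, so $P/Q_{\xi_j}$ is contrafinite; let $A_{P/Q_{\xi_j},\,y}\subset\sE$ be the associated finite exception set from Definition~\ref{contrafinite-module}. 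Setting $A=\bigcup_{j=1}^m A_{P/Q_{\xi_j},\,y}$, a finite union of finite sets, I find that for every $x\notin A$ all the maps into the $P/Q_{\xi_j}$ vanish, and therefore so does the action map into $P/Q$. As $y\in\sE$ was arbitrary, $P/Q$ is contrafinite, that is, $\bigcap_{\xi\in\Xi}Q_\xi$ is a big $\sE$\+submodule of~$P$. Note that this argument uses only local finiteness of $\sE$ and of $P$, not left strict local finiteness.
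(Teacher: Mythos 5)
Your proof is correct and follows essentially the same route as the paper's: both fix an object $y$, observe that the vanishing of the action maps into the quotient at~$y$ depends only on the subspace $Q(y)\subset P(y)$ (the paper phrases this as ``$y$\+bigness depends only on $K(y)$''), and use finite-dimensionality of $P(y)$ to replace the full intersection by a finite subintersection at~$y$. The only cosmetic difference is that the paper invokes its Lemma~\ref{finite-intersection-big} for the finite subintersection, whereas you inline that step by taking the union of the finitely many exception sets $A_{P/Q_{\xi_j},\,y}$ --- the same content, unwound explicitly.
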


\begin{proof}
 Given a left $\sE$\+module $T$ and an object $y\in\sE$, let us say
that the $\sE$\+module $T$ is \emph{$y$\+contrafinite} if a finite
subset of objects $A_{T,y}\subset\sE$ satisfying the condition of
Definition~\ref{contrafinite-module} exists (for one particular
object~$y$).
 Given a left $\sE$\+module $P$, let us say that an $\sE$\+submodule
$K\subset P$ is \emph{$y$\+big} if the quotient $\sE$\+module $P/K$
is $y$\+contrafinite.
 The key observation is that, for a fixed $\sE$\+module $P$ and
a fixed object $y\in\sE$, the property of an $\sE$\+submodule
$K\subset P$ to be $y$\+big depends only on the vector subspace
$K(y)\subset P(y)$ (and \emph{not} on the vector subspaces $K(x)
\subset P(x)$ for all the other objects $x\ne y$ in~$\sE$).

 Returning to the situation at hand, for every given object $y\in\sE$,
the $k$\+vector space $P(y)$ is finite-dimensional by assumption.
 Hence there exists a finite subset of indices $\Upsilon_y\subset\Xi$
such that $\bigcap_{\xi\in\Xi}Q_\xi(y)=
\bigcap_{\upsilon\in\Upsilon_y}Q_\upsilon(y)$.
 Now the $\sE$\+submodule $\bigcap_{\upsilon\in\Upsilon_y}
Q_\upsilon\subset P$ is big, and in particular $y$\+big,
by Lemma~\ref{finite-intersection-big}.
 It follows that the $\sE$\+submodule
$\bigcap_{\xi\in\Xi}Q_\xi\subset P$ is $y$\+big.
 As this holds for all $y\in\sE$, we can conclude that
the $\sE$\+submodule $\bigcap_{\xi\in\Xi}Q_\xi$ is big in~$P$.
\end{proof}

\begin{lem} \label{proper-big-subcontramodule}
 Let\/ $\sE$ be a locally finite $k$\+linear category and\/ $\fP$ be
a nonzero left\/ $\C_\sE$\+contramodule.
 Then the left\/ $\sE$\+module $\Theta_\sE(\fP)$ has a proper big\/
$\sE$\+submodule.
\end{lem}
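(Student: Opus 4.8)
The plan is to exhibit the required submodule as the image under $\Theta_\sE$ of the subcontramodule of $\fP$ cut out by the contraaction of the long morphisms. Using the notation of Remark~\ref{coalgebras-on-short-and-long-morphisms-remark} and Section~\ref{change-of-scalars-subsecn}, I would form the $\D_\sE^\prec$\+contraaction map $\rho_\fP\:\Hom_k(\D_\sE^\prec,\fP)\rarrow\fP$, defined as the composition of the injection $\Hom_k(\D_\sE^\prec,\fP)\rarrow\Hom_k(\C_\sE,\fP)$ induced by the surjection $\C_\sE\rarrow\D_\sE^\prec$ with the $\C_\sE$\+contraaction $\pi_\fP$, and set $\fT=\im(\rho_\fP)\subset\fP$. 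By Section~\ref{change-of-scalars-subsecn}, $\fT$ is a $\C_\sE$\+subcontramodule and $\fP/\fT$ is the maximal quotient contramodule whose $\C_\sE$\+contramodule structure arises from a $\C_\sE^\sim$\+contramodule structure. Since $\Theta_\sE$ is exact and faithful, the monomorphism $\fT\rarrow\fP$ gives a submodule $T=\Theta_\sE(\fT)\subset P=\Theta_\sE(\fP)$, and I claim that $T$ is a proper big $\sE$\+submodule.

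Properness is exactly where the hypothesis $\fP\ne0$ is used. Regarded as a contramodule over the noncounital coalgebra $\D_\sE^\prec$, the space $\fP$ is nonzero, and $\D_\sE^\prec$ is conilpotent by the interval finiteness condition (as recorded at the end of Remark~\ref{coalgebras-on-short-and-long-morphisms-remark}). The contramodule Nakayama lemma, Lemma~\ref{nakayama}(b), then shows that $\rho_\fP$ is not surjective, so that $\fP/\fT\ne0$; since $\Theta_\sE$ is exact and faithful, $P/T=\Theta_\sE(\fP/\fT)\ne0$, that is, $T\ne P$.

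For bigness I must verify that $P/T$ is contrafinite. Because $\fP/\fT$ carries a $\C_\sE^\sim$\+contramodule structure, Construction~\ref{from-contramodule-construction} shows that in the $\sE$\+module $P/T=\Theta_\sE(\fP/\fT)$ the action of every long morphism vanishes, so that $P/T$ is in fact an $\sE^\sim$\+module. Consequently, for a fixed object $y\in\sE$, the action map $\Hom_\sE(x,y)\ot_k(P/T)(x)\rarrow(P/T)(y)$ can be nonzero only if $\Hom_\sE(x,y)$ contains a short morphism, i.e.\ only if $x\sim y$. The set $\{x\mid x\sim y\}=\{z\mid y\preceq z\preceq y\}$ is finite by Definition~\ref{locally-finite-category}(ii), so it serves as the finite exceptional set $A_{P/T,y}$ demanded by Definition~\ref{contrafinite-module}. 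Hence $P/T$ is contrafinite and $T$ is big, completing the argument.

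The only delicate point is the passage between the contramodule and the module pictures: one must be sure that ``the $\C_\sE$\+contramodule structure on $\fP/\fT$ comes from $\C_\sE^\sim$'' really does translate, under $\Theta_\sE$, into ``long morphisms act by zero on $P/T$''. This, however, is precisely the compatibility already established in the closing paragraphs of the proof of Theorem~\ref{main-full-and-faithfulness-theorem}, so no genuinely new difficulty arises; the remainder is a formal consequence of the exactness and faithfulness of $\Theta_\sE$ together with Nakayama's lemma.
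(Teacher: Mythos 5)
Your proposal is correct and follows essentially the same route as the paper: take the image $\fT=\im(\rho_\fP)$ of the $\D_\sE^\prec$\+contraaction map, use the contramodule Nakayama lemma (Lemma~\ref{nakayama}(b)) together with conilpotency of $\D_\sE^\prec$ for properness, and observe that $P/T$ arises from an $\sE^\sim$\+module, hence is contrafinite by interval finiteness. The paper's proof is word-for-word this argument, only stated more tersely (your explicit identification of the exceptional set $A_{P/T,y}=\{x\mid x\sim y\}$ is left implicit there).
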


\begin{proof}
 The argument is based on the discussion in
Section~\ref{change-of-scalars-subsecn} and
Remark~\ref{coalgebras-on-short-and-long-morphisms-remark},
together with the contramodule Nakayama lemma
(Lemma~\ref{nakayama}(b)).
 Let $\rho_\fP\:\Hom_k(\D_\sE^\prec,\allowbreak\fP)\rarrow\fP$
denote the $\D_\sE^\prec$\+contraaction map.
 Following Section~\ref{change-of-scalars-subsecn},
the vector subspace $\fQ=\im(\rho_\fP)\subset\fP$ is
a $\C_\sE$\+subcontramodule.
 Furthermore, the $\C_\sE$\+contramodule structure on $\fP/\fQ$
arises from a $\C_\sE^\sim$\+contramodule structure.
 By Lemma~\ref{nakayama}(b), \,$\fQ$ is a proper subcontramodule
of~$\fP$.

 It remains to put $Q=\Theta_\sE(\fQ)$ and notice that $Q$ is
a proper $\sE$\+submodule of~$P$ such that the quotient
$\sE$\+module $P/Q$ arises from an $\sE^\sim$\+module.
 It is clear from the interval finiteness condition in
Definition~\ref{locally-finite-category} that any left
$\sE^\sim$\+module is contrafinite (also as an $\sE$\+module).
 So $P/Q$ is a nonzero contrafinite left $\sE$\+module and
$Q$ is a proper big $\sE$\+submodule in~$P$.
\end{proof}

\begin{proof}[Proof of the ``only if'' implication in
Theorem~\ref{locally-finite-contramodules-described}]
 Let $\fP$ be a locally finite left $\C_\sE$\+con\-tra\-mod\-ule.
 In the terminology of Definition~\ref{contrafinite-module},
we need to show that the left $\sE$\+module $P=\Theta_\sE(\fP)$
is contrafinite.

 By Lemma~\ref{infinite-intersection-in-locally-finite},
the intersection of all big $\sE$\+submodules in $P$ is
a big $\sE$\+submodule $K\subset P$.
 So $K$ is the unique minimal big $\sE$\+submodule in~$P$.
 The quotient $\sE$\+module $Q=P/K$ is locally finite and contrafinite.
 In order to prove that $P$ is contrafinite, we will show that $K=0$.

 By the ``if'' implication of the theorem (which we have proved already
in the beginning of this section), the left $\sE$\+module $Q$ arises
from some left $\sE$\+contramodule $\fQ$; so $Q=\Theta_\sE(\fQ)$.
 As the functor $\Theta_\sE$ is (exact and) fully faithful by
Theorem~\ref{main-full-and-faithfulness-theorem}, the surjective
$\sE$\+module morphism $P\rarrow Q$ arises from a surjective
$\C_\sE$\+contramodule morphism $\fP\rarrow\fQ$.
 Put $\fK=\ker(\fP\to\fQ)$.
 Then $K=\Theta_\sE(\fK)$; so the $\sE$\+submodule $K\subset P$
also arises from a $\C_\sE$\+subcontramodule $\fK\subset\fP$.

 Assume for the sake of contradiction that $K\ne0$.
 Then Lemma~\ref{proper-big-subcontramodule} tells that the left
$\sE$\+module $K$ has a proper big $\sE$\+submodule $L\subset K$.
 Now we have a short exact sequence of left $\sE$\+modules
$0\rarrow K/L\rarrow P/L\rarrow P/K=Q\rarrow0$.
 The $\sE$\+submodule $L$ is big in $K$, and the $\sE$\+submodule
$K$ is big in~$P$; so both the $\sE$\+modules $K/L$ and $Q$ are
contrafinite.
 By Lemma~\ref{contrafinite-extension-closed}, it follows that
the $\sE$\+module $P/L$ is contrafinite, too.
 So $L$ is a big $\sE$\+submodule in~$P$.

 But $L$ is a proper $\sE$\+submodule of $K$, and $K$ is the (unique)
minimal big $\sE$\+submodule in $P$ by construction.
 The contradiction proves that $K=0$ and the left $\sE$\+module $P$
is contrafinite.
\end{proof}

\begin{qst}
 Let $\sE$ be a lower strictly locally finite $k$\+linear category and
$\fP$ be a left $\C_\sE$\+contramodule.
 Fix an object $y\in\sE$ and assume that the $k$\+vector space $\fP^y$
(for one particular object~$y$) is finite-dimensional.
 Put $P=\Theta_\sE(\fP)$.
 Does it follow that there exists a finite set of objects $A_{P,y}
\subset\sE$ satisfying the condition of
Definition~\ref{contrafinite-module} (for the particular object~$y$)?
 In other words, is the left $\sE$\+module $P$ necessarily
$y$\+contrafinite in the sense of the terminology in the proof
of Lemma~\ref{infinite-intersection-in-locally-finite}\,?
 Our proof of the ``only if'' implication in
Theorem~\ref{locally-finite-contramodules-described} does not seem to
answer this question.
\end{qst}

\Section{Vector Space Duality}  \label{vector-space-dualization-secn}

 Let $\sE$ be a $k$\+linear category and $N\:\sE^\sop\rarrow k\Vect$
be a right $\sE$\+module.
 Then the rule $P(x)=N(x)^*=\Hom_k(N(x),k)$ defines a left
$\sE$\+module $P=N^*\:\sE\rarrow k\Vect$.

 On the other hand, let $\C$ be a coalgebra over~$k$ and $\N$ be
a right $\C$\+comodule.
 Then the construction from Section~\ref{preliminaries-secn} defines
a left $\C$\+contramodule structure on the $k$\+vector space
$\fP=\N^*=\Hom_k(\N,k)$.

 Now let $\sE$ be a locally finite $k$\+linear category, and let
$\C_\sE$ be the related coalgebra as per
Construction~\ref{coalgebra-construction}.
 Then we have a commutative square of exact, faithful functors
\begin{equation} \label{dualization-square}
\begin{gathered}
 \xymatrix{
  (\Comodr\C_\sE)^\sop \ar[rr]^-{\N\mapsto\N^*}
  \ar[d]_{(\Upsilon_{\sE^\sop})^\sop}
  && \C_\sE\Contra \ar[d]^{\Theta_\sE} \\
  (\Modr\sE)^\sop \ar[rr]^-{N\mapsto N^*}
  && \sE\Modl
 }
\end{gathered}
\end{equation}
 Here the functor $\Upsilon_{\sE^\sop}$ is the right (co)module
version of the functor $\Upsilon_\sE$ from
Construction~\ref{from-comodule-construction}, while the functor
$\Theta_\sE$ was defined in
Construction~\ref{from-contramodule-construction}.

 Notice that a $\C_\sE$\+comodule $\N$ is a direct sum of its
components, $\N=\bigoplus_{x\in\sE}\N_x$, while
a $\C_\sE$\+contramodule $\fP$ is a direct product of its
components, $\fP=\prod_{x\in\sE}\fP^x$.
 The diagram~\eqref{dualization-square} is commutative due to
the fact that the vector space dualization functor $\N\longmapsto\N^*$
takes the direct sums to the direct products.

 Let us introduce some further notation.
 Denote the full subcategories of locally finite $\sE$\+modules
(see Definition~\ref{locally-finite-modules-and-comodules}) by
$\sE\Modl_\lf\subset\sE\Modl$ and $\Modrlf\sE\subset\Modr\sE$.
 It is clear that the functor $N\longmapsto P=N^*$ restricts to
a anti-equivalence of categories
\begin{equation} \label{lf-modules-dualization-equivalence}
 (\Modrlf\sE)^\sop \xrightarrow[\,\,N\mapsto N^*\,]\simeq
 \sE\Modl_\lf.
\end{equation}
 This follows simply from the fact that the dual vector space functor
is an anti-equivalence on the category of finite-dimensional
vector spaces, $(k\vect)^\sop\simeq k\vect$.

 Furthermore, denote the full subcategories of locally finite
$\C_\sE$\+comodules by $\C_\sE\Comodl_\lf\subset\C_\sE\Comodl$
and $\Comodrlf\C_\sE\subset\Comodr\C_\sE$.
 Similarly, let the full subcategory of locally finite left
$\C_\sE$\+contramodules (see
Definition~\ref{locally-finite-contramodules-defn}) be denoted by
$\C_\sE\Contra_\lf\subset\C_\sE\Contra$.
 It is clear that the functor $\N\longmapsto\fP=\N^*$ restricts
to an exact, faithful functor
\begin{equation} \label{lf-co-contra-modules-dualization}
 (\Comodrlf\C_\sE)^\sop \xrightarrow{\,\,\N\mapsto\N^*\,}
 \C_\sE\Contra_\lf.
\end{equation}
 In fact, one can say more.

\begin{prop}
 For any locally finite $k$\+linear category\/ $\sE$, the contravariant
functor $(\Comodrlf\C_\sE)^\sop\rarrow\C_\sE\Contra_\lf$
\,\eqref{lf-co-contra-modules-dualization} is fully faithful.
\end{prop}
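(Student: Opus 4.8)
The plan is to reduce the statement to the already-established properties of the vertical and horizontal functors in the square~\eqref{dualization-square} by a diagram chase. Faithfulness of the functor~\eqref{lf-co-contra-modules-dualization} is already recorded, so only \emph{fullness} needs to be proved. Fix two locally finite right $\C_\sE$\+comodules $\N$ and $\N'$; I must show that every $\C_\sE$\+contramodule morphism $\psi\:\N^*\rarrow(\N')^*$ has the form $f^*$ for some morphism of right $\C_\sE$\+comodules $f\:\N'\rarrow\N$ (the source and target being swapped because we work in the opposite category).

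First I would push $\psi$ down the right edge of~\eqref{dualization-square}. Writing $N=\Upsilon_{\sE^\sop}(\N)$ and $N'=\Upsilon_{\sE^\sop}(\N')$ for the associated right $\sE$\+modules, commutativity of~\eqref{dualization-square} on objects identifies $\Theta_\sE(\N^*)=N^*$ and $\Theta_\sE((\N')^*)=(N')^*$, so $\Theta_\sE(\psi)$ becomes a morphism $N^*\rarrow(N')^*$ of left $\sE$\+modules. Since $\N$ and $\N'$ are locally finite, the modules $N$ and $N'$ are locally finite as well, so the anti-equivalence~\eqref{lf-modules-dualization-equivalence} yields a unique right $\sE$\+module morphism $g\:N'\rarrow N$ with $g^*=\Theta_\sE(\psi)$. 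Next I would lift $g$ along the left edge: the right-module version of Proposition~\ref{comodule-fully-faithful} (the functor $\Upsilon_{\sE^\sop}$ is fully faithful by the left-right symmetry of local finiteness) applies, and since $N=\Upsilon_{\sE^\sop}(\N)$ and $N'=\Upsilon_{\sE^\sop}(\N')$, there is a unique comodule morphism $f\:\N'\rarrow\N$ with $\Upsilon_{\sE^\sop}(f)=g$.

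It then remains to verify that $f^*=\psi$. Applying $\Theta_\sE$ to $f^*$ and invoking commutativity of~\eqref{dualization-square} once more gives $\Theta_\sE(f^*)=\bigl(\Upsilon_{\sE^\sop}(f)\bigr)^*=g^*=\Theta_\sE(\psi)$; since $\Theta_\sE$ is faithful for every locally finite $\sE$, this forces $f^*=\psi$, which is precisely fullness. I do not expect a genuine obstacle: the argument is a formal consequence of the commuting square together with full-and-faithfulness of $\Upsilon_{\sE^\sop}$, faithfulness of $\Theta_\sE$, and the anti-equivalence~\eqref{lf-modules-dualization-equivalence} on locally finite modules. The only point demanding care is the bookkeeping of the opposite-category conventions and the directions of the arrows under dualization, so that the uniquely produced $g$ and $f$ really point the intended way. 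Notably, this route deliberately avoids any use of full-and-faithfulness of $\Theta_\sE$ itself, which is unavailable here since the left strict local finiteness hypothesis is \emph{not} assumed in this proposition.
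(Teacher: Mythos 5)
Your proof is correct, and it takes a genuinely different route from the paper's. The paper argues directly at the level of the coalgebra structures: it invokes the general fact that a $k$\+linear map $f\:\M\rarrow\N$ between right $\C$\+comodules is a comodule morphism if and only if $f^*\:\N^*\rarrow\M^*$ is a contramodule morphism, and then observes that in the situation at hand every contramodule morphism between the duals does arise as the dual of some linear map, because morphisms in $\C_\sE\Contra$ respect the product decomposition $\fP=\prod_{x\in\sE}\fP^x$ and the components $\M_x$, $\N_x$ of locally finite comodules are finite-dimensional. You instead package the same finite-dimensionality input into the anti-equivalence~\eqref{lf-modules-dualization-equivalence} and run a formal diagram chase around the square~\eqref{dualization-square}, using full-and-faithfulness of $\Upsilon_{\sE^\sop}$ (the right-module version of Proposition~\ref{comodule-fully-faithful}) and mere faithfulness of $\Theta_\sE$ --- all of which hold for arbitrary locally finite $\sE$, so there is no circularity and, as you rightly stress, no hidden appeal to Theorem~\ref{main-full-and-faithfulness-theorem}. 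Your bookkeeping is sound: the identity $\Theta_\sE(f^*)=\bigl(\Upsilon_{\sE^\sop}(f)\bigr)^*$ is exactly what commutativity of the square asserts on morphisms, and the produced $g\:N'\rarrow N$ and $f\:\N'\rarrow\N$ point the correct way. What your route buys is modularity --- it isolates precisely which previously established ingredients suffice, making explicit that fullness of $\Theta_\sE$ is never needed; what the paper's route buys is brevity and self-containedness --- it needs neither the commutativity of the square nor full-and-faithfulness of $\Upsilon_{\sE^\sop}$, only the standard duality fact for coalgebras together with the componentwise structure of contramodule morphisms.
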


\begin{proof}
 For any coalgebra $\C$ over~$k$, a $k$\+vector space map
$f\:\M\rarrow\N$ between two right comodules $\M$, $\N$ over $\C$ is
a $\C$\+comodule morphism if and only if the dual map
$f^*\:\N^*\rarrow\M^*$ is a left $\C$\+contramodule morphism.
 Not all $\C$\+contramodule morphisms $g\:\N^*\rarrow\M^*$, viewed
as $k$\+vector space maps, arise from $k$\+vector space maps
$\M\rarrow\N$ in general.
 But in the situation at hand they do, because all morphisms
$g\:\fP\rarrow\fQ$ in $\C_\sE\Contra$ respect the decomposition into
the product of the components, $\fP=\prod_{x\in\sE}\fP^x$ and
$\fQ=\prod_{x\in\sE}\fQ^x$, and the components $\M_x$ and $\N_x$ of
all comodules $\M$, $\N\in\Comodrlf\C_\sE$ are finite-dimensional
by the definition.
\end{proof}

 Before stating a corollary
(of Theorems~\ref{locally-finite-comodules-described}
and~\ref{locally-finite-contramodules-described}), let us formulate
explicitly the right (co)module version of
Theorem~\ref{locally-finite-comodules-described}.

\begin{prop} \label{locally-finite-right-comodules}
 Let\/ $\sE$ be a locally finite $k$\+linear category and
$N\:\sE^\sop\rarrow k\vect$ be a locally finite right\/ $\sE$\+module.
 Then $N$ arises from some (locally finite) right\/ $\C_\sE$\+comodule
via the comodule inclusion functor\/ $\Upsilon_{\sE^\sop}$ if and
only if, for every fixed object $y\in\sE$, the set of all objects
$x\in\sE$ with nonzero action map
$$
 \Hom_\sE(x,y)\ot_k N(y)\lrarrow N(x)
$$
is finite.
\end{prop}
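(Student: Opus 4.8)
The plan is to derive this Proposition from Theorem~\ref{locally-finite-comodules-described} by passing to the opposite $k$\+linear category $\sE^\sop$, using the left-right symmetry of the whole construction noted after Definition~\ref{locally-finite-category}. The local finiteness conditions of Definition~\ref{locally-finite-category} are preserved under $\sE\longmapsto\sE^\sop$, so $\sE^\sop$ is again locally finite and Construction~\ref{coalgebra-construction} produces a coalgebra $\C_{\sE^\sop}$. Inspecting that construction shows that $\C_{\sE^\sop}$ is the opposite coalgebra of $\C_\sE$: on components one has $\C_{\sE^\sop}^{x,y}=\Hom_{\sE^\sop}(x,y)^*=\Hom_\sE(y,x)^*=\C_\sE^{y,x}$, and the comultiplication of $\C_{\sE^\sop}$ is obtained from that of $\C_\sE$ by interchanging the two tensor factors.

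First I would spell out the three identifications this produces. A right $\sE$\+module, being a covariant $k$\+linear functor $\sE^\sop\rarrow k\Vect$, is literally a left $\sE^\sop$\+module with the vector space $N(x)$ sitting at the object~$x$; under this identification the right $\sE$\+action map $\Hom_\sE(x,y)\ot_k N(y)\rarrow N(x)$ is precisely the left $\sE^\sop$\+action map $\Hom_{\sE^\sop}(y,x)\ot_k N(y)\rarrow N(x)$. For any coalgebra, a right comodule is the same thing as a left comodule over the opposite coalgebra, so right $\C_\sE$\+comodules coincide with left $\C_{\sE^\sop}$\+comodules. Finally, the functor $\Upsilon_{\sE^\sop}$ is by definition Construction~\ref{from-comodule-construction} performed for $\sE^\sop$, so under these identifications it is the functor~$\Upsilon$ of that construction for the category $\sE^\sop$. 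The only thing to verify here is that the component decomposition $\N=\bigoplus_{x\in\sE}\N_x$ of a right $\C_\sE$\+comodule matches the decomposition of the corresponding left $\C_{\sE^\sop}$\+comodule; this is immediate from Lemma~\ref{direct-sum-of-coalgebras}(a) together with the equality $\C_\sE^\id=\C_{\sE^\sop}^\id$, both coalgebras depending only on the common object set.

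With these identifications in place, I would apply Theorem~\ref{locally-finite-comodules-described} verbatim to the locally finite $k$\+linear category $\sE^\sop$ and the locally finite left $\sE^\sop$\+module~$N$. That theorem states that $N$ lies in the essential image of $\Upsilon$ if and only if, for every fixed object $a\in\sE^\sop$, the set of objects $b\in\sE^\sop$ with nonzero action map $\Hom_{\sE^\sop}(a,b)\ot_k N(a)\rarrow N(b)$ is finite. Rewriting $\Hom_{\sE^\sop}(a,b)=\Hom_\sE(b,a)$ and renaming $a=y$, $b=x$ turns this into: for every fixed $y\in\sE$, the set of objects $x\in\sE$ with nonzero action map $\Hom_\sE(x,y)\ot_k N(y)\rarrow N(x)$ is finite, which is exactly the condition asserted in the Proposition.

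The only real point requiring care is the bookkeeping in this last step: in the left-module formulation one fixes the source object and lets the target vary, whereas in the right-module formulation one fixes~$y$ (the common target of the morphisms) and lets the source~$x$ vary, so one must make sure the direction of the $\Hom$ spaces and the roles of ``fixed'' and ``varying'' objects are transported correctly. Beyond this purely formal transposition there is no new content, which is why the statement is recorded as a Proposition and deduced from Theorem~\ref{locally-finite-comodules-described} rather than proved independently.
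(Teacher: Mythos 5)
Your proposal is correct and takes essentially the same approach as the paper, whose entire proof reads ``Apply Theorem~\ref{locally-finite-comodules-described} to the locally finite $k$\+linear category $\sE^\sop$ in place of~$\sE$.'' The identifications you spell out (right $\sE$\+modules as left $\sE^\sop$\+modules, right $\C_\sE$\+comodules as left $\C_{\sE^\sop}$\+comodules, and the careful relabeling of the fixed and varying objects) are exactly the bookkeeping the paper leaves implicit.
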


\begin{proof}
 Apply Theorem~\ref{locally-finite-comodules-described} to
the locally finite $k$\+linear category $\sE^\sop$ in place of~$\sE$.
\end{proof}

 The following corollary is the main result of this section.

\begin{cor} \label{anti-equivalence-corollary}
 For any lower strictly locally finite $k$\+linear category\/ $\sE$,
the vector space dualization
functor~\eqref{lf-co-contra-modules-dualization} is
anti-equivalence between the abelian category of locally finite
right\/ $\C_\sE$\+comodules and the abelian category of locally
finite left\/ $\C_\sE$\+contramodules,
$$
 (\Comodrlf\C_\sE)^\sop \xrightarrow[\,\,\N\mapsto\N^*\,]{\simeq}
 \C_\sE\Contra_\lf.
$$
\end{cor}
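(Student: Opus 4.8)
The plan is to upgrade the fully faithfulness already recorded (in the Proposition above asserting that the functor~\eqref{lf-co-contra-modules-dualization} is fully faithful) to a genuine anti-equivalence by establishing essential surjectivity. The organizing tool throughout will be the commutative square~\eqref{dualization-square} restricted to locally finite objects, whose bottom horizontal arrow is the anti-equivalence~\eqref{lf-modules-dualization-equivalence}. The essential image of the left vertical functor $(\Upsilon_{\sE^\sop})^\sop$ is described by Proposition~\ref{locally-finite-right-comodules}, whereas the essential image of the right vertical functor $\Theta_\sE$, restricted to locally finite contramodules, is described by Theorem~\ref{locally-finite-contramodules-described}. So the whole problem reduces to matching these two essential images across the bottom anti-equivalence.

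The key observation I would record first is that the two finiteness conditions are dual to one another. For a locally finite right $\sE$\+module $N$ with $P=N^*$ and a morphism $f\in\Hom_\sE(x,y)$, the map $P(f)\:P(x)\rarrow P(y)$ is the transpose of $N(f)\:N(y)\rarrow N(x)$; as these are maps of finite-dimensional vector spaces, $P(f)=0$ if and only if $N(f)=0$. Hence the action map $\Hom_\sE(x,y)\ot_k P(x)\rarrow P(y)$ is nonzero exactly when the action map $\Hom_\sE(x,y)\ot_k N(y)\rarrow N(x)$ is nonzero. Consequently, for a fixed object $y\in\sE$, the set of objects $x$ occurring in the condition of Theorem~\ref{locally-finite-contramodules-described} for $P$ coincides with the set of objects $x$ occurring in the condition of Proposition~\ref{locally-finite-right-comodules} for $N$; one is finite if and only if the other is.

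With this in hand, essential surjectivity becomes a short diagram chase. Given $\fP\in\C_\sE\Contra_\lf$, I would put $P=\Theta_\sE(\fP)$. The ``only if'' implication of Theorem~\ref{locally-finite-contramodules-described} then shows that $P$ satisfies the contrafiniteness condition. Let $N$ be the locally finite right $\sE$\+module with $N^*=P$ supplied by~\eqref{lf-modules-dualization-equivalence}; by the matching of conditions above, $N$ satisfies the hypothesis of Proposition~\ref{locally-finite-right-comodules}, so there is a locally finite right $\C_\sE$\+comodule $\N$ with $\Upsilon_{\sE^\sop}(\N)=N$. Commutativity of~\eqref{dualization-square} now yields $\Theta_\sE(\N^*)=(\Upsilon_{\sE^\sop}(\N))^*=N^*=P=\Theta_\sE(\fP)$, and both $\N^*$ and $\fP$ lie in $\C_\sE\Contra_\lf$. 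Since $\Theta_\sE$ is fully faithful by Theorem~\ref{main-full-and-faithfulness-theorem}, it reflects isomorphisms, so $\N^*\cong\fP$. Together with the fully faithfulness already established, this completes the anti-equivalence.

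The point I would stress is that the only step requiring left strict local finiteness is the invocation of the ``only if'' implication of Theorem~\ref{locally-finite-contramodules-described}; every other ingredient---the duality of action maps, the module-level anti-equivalence~\eqref{lf-modules-dualization-equivalence}, and the commutative square---holds for an arbitrary locally finite $\sE$. The genuine difficulty is therefore entirely packaged inside that theorem, and I expect the remaining bookkeeping to be routine.
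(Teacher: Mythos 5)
Your proposal is correct and follows essentially the same route as the paper's (very terse) proof: compare the finiteness conditions of Proposition~\ref{locally-finite-right-comodules} and Theorem~\ref{locally-finite-contramodules-described} across the module-level anti-equivalence~\eqref{lf-modules-dualization-equivalence}, and use commutativity of the square~\eqref{dualization-square}, with full-and-faithfulness of~\eqref{lf-co-contra-modules-dualization} and of $\Theta_\sE$ (the latter giving uniqueness of the contramodule lift) supplying the remaining ingredients. Your write-up simply makes explicit the bookkeeping that the paper leaves to the reader.
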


\begin{proof}
 Compare the descriptions of locally finite right $\C_\sE$\+comodules
and locally finite left $\C_\sE$\+contramodules provided by
Proposition~\ref{locally-finite-right-comodules}
and Theorem~\ref{locally-finite-contramodules-described}, and
take into account the anti-equivalence of module
categories~\eqref{lf-modules-dualization-equivalence} together with
commutativity of the dualization diagram~\eqref{dualization-square}.
\end{proof}

\begin{rem}
 The theory of finite-dimensional comodules and contramodules over
arbitrary coalgebras $\C$ over~$k$ is quite different from the theory
of locally finite comodules and contramodules over
the coalgebras~$\C_\sE$.

 Let $\C\comodl\subset\C\Comodl$, \ $\comodr\C\subset\Comodr\C$,
and $\C\contra\subset\C\Contra$ denote the full subcategories of
finite-dimensional comodules and contramodules.
 Then the vector space dualization functor $\N\longmapsto\M=\N^*$ 
provides an anti-equivalence between the categories of
finite-dimensional left and right $\C$\+comodules,
\begin{equation} \label{fin-dim-comodule-dualization}
 (\comodr\C)^\sop \xrightarrow[\,\,\N\mapsto\N^*\,]{\simeq} \C\comodl.
\end{equation}
 There is also the exact, fully faithful dualization functor
\begin{equation} \label{fin-dim-co-contra-modules-dualization}
 (\comodr\C)^\sop \xrightarrow{\,\,\N\mapsto\N^*\,}
 \C\contra.
\end{equation}
 Consequently, any finite-dimensional left $\C$\+comodule acquires
a left $\C$\+contramodule structure, so there is an exact, fully
faithful functor $\M\longmapsto\fP=\M$ defined on finite-dimensional
comodules~\cite[Section~A.1.2]{Psemi}, \cite[Section~8.5]{Pksurv},
\begin{equation} \label{fin-dim-comodules-to-contramodules}
 \C\comodl \xrightarrow{\,\,\M\mapsto\M\,} \C\contra.
\end{equation}

 The functors~\eqref{fin-dim-co-contra-modules-dualization}
and~\eqref{fin-dim-comodules-to-contramodules} are \emph{not}
essentially surjective for an infinite-dimensional coalgebra~$\C$
in general~\cite[Section~A.1.2]{Psemi}.
 Comparing the discussion of irreducible comodules and
contramodules in~\cite[Section~A.1.2]{Psemi},
\cite[Section~8.5]{Pksurv} with the discussion of the Ext spaces
for finite-dimensional comodules and contramodules
in~\cite[Section~A.1.2]{Psemi}, \cite[proof of Proposition~3.3]{Phff},
one can see that, for a conilpotent coalgebra $\C$,
the functors~(\ref{fin-dim-co-contra-modules-dualization}\+-%
\ref{fin-dim-comodules-to-contramodules}) are essentially surjective
if and only if the coalgebra $\C$ is finitely cogenerated
(cf.\ Remark~\ref{long-remark}).

 For comparison, \emph{no functors similar
to~\eqref{fin-dim-comodule-dualization}
or~\eqref{fin-dim-comodules-to-contramodules} exist} for locally
finite comodules and contramodules over the coalgebra $\C_\sE$
corresponding to a lower/upper strictly locally finite $k$\+linear
category $\sE$ in general, as one can easily observe in the case of
the category $\sE$ from
Example~\ref{noncomodule-noncontramodule-counterex}
and Remark~\ref{nonuniform-remark} below.
 On the other hand, Example~\ref{contra-ff-counterex} shows that
the lower strict local finiteness condition on the category $\sE$
\emph{cannot} be replaced by the weaker condition of local finiteness
of $\sE$ in Corollary~\ref{anti-equivalence-corollary}.
\end{rem}

\Section{Closedness Under Extensions}
\label{extension-closure-secn}

 We start with an obvious corollary of the arguments in
Section~\ref{locally-finite-contramodules-secn}, particularly
of Theorem~\ref{locally-finite-contramodules-described}
and Lemma~\ref{contrafinite-extension-closed}.

\begin{cor} \label{loc-fin-contramodules-extension-closed}
 Let\/ $\sE$ be a lower strictly locally finite $k$\+linear category.
 Then the class of all left\/ $\sE$\+modules of the form\/
$\Theta_\sE(\fP)$, where\/ $\fP$ ranges over the locally finite
left\/ $\C_\sE$\+contramodules, is closed under extensions in
the abelian category of (locally finite) left\/ $\sE$\+modules.
\end{cor}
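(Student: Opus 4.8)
The plan is to reduce the corollary to the two results of Section~\ref{locally-finite-contramodules-secn} that carry all the weight, namely Theorem~\ref{locally-finite-contramodules-described} and Lemma~\ref{contrafinite-extension-closed}. First I would give the class in question an intrinsic module-theoretic description. By the definition of a locally finite contramodule, a left $\C_\sE$\+contramodule $\fP$ is locally finite precisely when the left $\sE$\+module $\Theta_\sE(\fP)$ is locally finite; so every module in the class is locally finite. Conversely, a locally finite left $\sE$\+module $P$ is of the form $\Theta_\sE(\fP)$ for some locally finite $\fP$ if and only if $P$ is \emph{contrafinite} in the sense of Definition~\ref{contrafinite-module}, by Theorem~\ref{locally-finite-contramodules-described}. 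Hence the class of all $\Theta_\sE(\fP)$ with $\fP$ locally finite coincides with the class of locally finite contrafinite left $\sE$\+modules, and the corollary becomes the assertion that this class is extension\+closed in $\sE\Modl$.

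Next I would fix a short exact sequence $0\rarrow P'\rarrow T\rarrow P''\rarrow0$ of left $\sE$\+modules with $P'$ and $P''$ both locally finite and contrafinite, and check the two closure properties separately. Local finiteness of $T$ is immediate: evaluating at any object $x\in\sE$ yields a short exact sequence $0\rarrow P'(x)\rarrow T(x)\rarrow P''(x)\rarrow0$ of $k$\+vector spaces whose outer terms are finite\+dimensional, so $T(x)$ is finite\+dimensional. Contrafiniteness of $T$ is exactly the conclusion of Lemma~\ref{contrafinite-extension-closed}, which is where the left strict local finiteness hypothesis on $\sE$ enters (through the frontier/distance machinery of that proof). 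Combining the two observations, $T$ is a locally finite contrafinite left $\sE$\+module, hence lies in the class, and the proof is complete.

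I do not expect any genuine obstacle here: the corollary is a bookkeeping consequence, and the substantive inductive argument has already been carried out in the proof of Lemma~\ref{contrafinite-extension-closed}. The only point worth a brief remark is that the parenthetical ``(locally finite)'' in the statement is harmless. Since finite\+dimensionality of vector spaces is closed under extensions, an extension of two locally finite modules is automatically locally finite; therefore the ambient abelian category in which the extension is formed may be taken to be either $\sE\Modl$ or its full subcategory $\sE\Modl_\lf$ without any change in the conclusion.
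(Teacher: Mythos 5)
Your proof is correct and coincides with the paper's own argument: both identify the class with the locally finite contrafinite left $\sE$\+modules via Theorem~\ref{locally-finite-contramodules-described}, handle local finiteness of the extension by the obvious pointwise observation, and delegate contrafiniteness to Lemma~\ref{contrafinite-extension-closed}. Your closing remark about the ambient category ($\sE\Modl$ versus its locally finite subcategory) is a fair clarification of the parenthetical in the statement, but adds nothing beyond the paper's proof.
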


\begin{proof}
 It is obvious that any extension of two locally finite $\sE$\+modules
is locally finite.
 Theorem~\ref{locally-finite-contramodules-described} tells that
a locally finite left $\sE$\+module $P$ has the form $\Theta_\sE(\fP)$
for some (locally finite) left $\sE$\+contramodule $\fP$ if and only if
$P$ is contrafinite in the sense of
Definition~\ref{contrafinite-module}.
 By Lemma~\ref{contrafinite-extension-closed}, any extension of two
contrafinite left $\sE$\+modules is contrafinite.
\end{proof}

 The aim of this section is to prove a comodule version of
Corollary~\ref{loc-fin-contramodules-extension-closed}, which does
not require the local finiteness assumption on the (co)modules.
 The following definition is a ``comodule'' version of
Definition~\ref{contrafinite-module}.

\begin{defn}
 Let $\sE$ be a locally finite $k$\+linear category and $N$ be
a right $\sE$\+module.
 We will say that $N$ is \emph{cofinite} if it satisfies the condition
of Proposition~\ref{locally-finite-right-comodules}.
 To repeat, this means that, for every object $y\in\sE$, there is
a finite subset of objects $A_{N,y}\subset\sE$ such that, for all
objects $x\in\sE$, \ $x\notin A_{N,y}$, the action map
$\Hom_\sE(x,y)\ot_k N(y)\rarrow N(x)$ vanishes.
\end{defn}

\begin{lem} \label{cofinite-extension-closed}
 Let\/ $\sE$ be a lower strictly locally finite $k$\+linear category.
 Then the class of all cofinite right\/ $\sE$\+modules is closed
under extensions in\/ $\Modr\sE$.
\end{lem}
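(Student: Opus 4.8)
The plan is to prove Lemma~\ref{cofinite-extension-closed} as the exact mirror image of the proof of Lemma~\ref{contrafinite-extension-closed}. The crucial observation is that the cofiniteness condition on a right $\sE$\+module involves exactly the same combinatorial data as the contrafiniteness condition on a left $\sE$\+module: in both cases one fixes an object $y\in\sE$ and must bound the set of objects $x$ admitting a nonzero morphism $x\rarrow y$ that acts nontrivially. The only difference is the variance of the functor — a right $\sE$\+module sends $f\:x\rarrow y$ to a map $N(f)\:N(y)\rarrow N(x)$ rather than a map $M(x)\rarrow M(y)$ — and this will enter only in the very last step. Consequently the frontier machinery (standard frontiers $X_y^{(n)}$, Lemma~\ref{degree-n-standard-frontier}, and the distance function), which concerns the decomposition of morphisms \emph{into}~$y$, applies verbatim. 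This is also why the hypothesis is left strict local finiteness, even though the statement concerns right modules.

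Concretely, I would take a short exact sequence $0\rarrow N'\rarrow T\rarrow N''\rarrow0$ in $\Modr\sE$ with $N'$ and $N''$ cofinite, fix an object $y\in\sE$, and produce a finite set $A_{T,y}$. First I would choose $n$ large enough that no object of $A_{N',y}\cup A_{N'',y}$ lies in the degree~$n$ standard frontier $X_y^{(n)}$; this is possible because $d(x,y)\ge n$ for every $x\in X_y^{(n)}$ while all distances are finite. For such $x$ the maps $N'(h)$ and $N''(h)$ vanish for every $h\in\Hom_\sE(x,y)$. Next I would pick $m>n$ with no object of $B=\bigcup_{x\in X_y^{(n)}}(A_{N',x}\cup A_{N'',x})$ in $X_y^{(m)}$, and use the inclusion $X_x^{(m-n)}\subseteq X_y^{(m)}$ together with the frontier property of $X_x^{(m-n)}$ for each $x\in X_y^{(n)}$ to conclude that $N'(g)$ and $N''(g)$ vanish for every $g\:u\rarrow x$ with $u\prec x$ and $u\notin W_x^{(m-n)}$. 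I would then set $A_{T,y}=\{y\}^\sim\cup W_y^{(m)}$.

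The final step is the triangular vanishing argument. Given $u\notin A_{T,y}$ and a nonzero $f\in\Hom_\sE(u,y)$, one has $u\prec y$ and $u\notin W_y^{(n)}$, so the frontier $X_y^{(n)}$ furnishes a decomposition $f=\sum_i h_ig_i$ with $g_i\:u\rarrow x_i$ nonzero, $h_i\:x_i\rarrow y$, and $x_i\in X_y^{(n)}$. For each $i$ the nonvanishing of $g_i$ forces $u\prec x_i$ and $u\notin W_{x_i}^{(m-n)}$, whence $N'(g_i)=N''(g_i)=0$ and $N'(h_i)=N''(h_i)=0$ by the previous paragraph. Fixing vector\+space splittings $T(v)\simeq N'(v)\oplus N''(v)$, each of $T(g_i)$ and $T(h_i)$ becomes strictly upper triangular; since $T$ is contravariant one has $T(h_ig_i)=T(g_i)T(h_i)$, a product of two strictly upper triangular $2\times2$ block operators, hence zero. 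Therefore $T(f)=\sum_i T(h_ig_i)=0$; as $y$ was arbitrary, $T$ is cofinite.

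I do not anticipate a conceptual obstacle, since the argument is a formal transcription of Lemma~\ref{contrafinite-extension-closed} and the sole place where contravariance intervenes is the harmless reordering in the last composition. The point deserving care is purely bookkeeping: one must keep the two frontier levels straight and check that every frontier decomposition is applied to morphisms into the fixed upper object — into~$y$ at the first level and into each $x\in X_y^{(n)}$ at the second — which is exactly where left strict local finiteness is consumed. For \emph{locally finite} modules one could instead deduce the result from Lemma~\ref{contrafinite-extension-closed} through the vector\+space duality $N\mapsto N^*$, under which cofinite right modules correspond to contrafinite left modules; but because Lemma~\ref{cofinite-extension-closed} imposes no local finiteness, the direct mirror argument is the appropriate route.
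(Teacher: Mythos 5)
Your mirror argument is correct, and it is the first of the two routes the paper itself indicates: the paper's proof of Lemma~\ref{cofinite-extension-closed} begins by observing that the statement is provable ``by exactly the same argument'' as Lemma~\ref{contrafinite-extension-closed} ``with the action maps going in the opposite direction,'' which is exactly what you have carried out (the two-level frontier bookkeeping, the choice $A_{T,y}=\{y\}^\sim\cup W_y^{(m)}$, and the strictly triangular composition with the contravariant reordering $T(h_ig_i)=T(g_i)T(h_i)$ all match the original proof). Where you go wrong is in your final paragraph: the dualization reduction that you set aside as applicable only to locally finite modules is in fact the argument the paper writes out in detail, and it works for \emph{arbitrary} right $\sE$\+modules. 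No anti-equivalence is needed; one only needs that $\Hom_k({-},k)$ is exact, so that $0\rarrow L\rarrow M\rarrow N\rarrow0$ dualizes to a short exact sequence $0\rarrow N^*\rarrow M^*\rarrow L^*\rarrow0$ of left $\sE$\+modules, and that a right $\sE$\+module $N$ is cofinite if and only if $N^*$ is contrafinite --- which holds for all $N$, locally finite or not, because a $k$\+linear map vanishes if and only if its dual map vanishes, so the action map $\Hom_\sE(x,y)\ot_k N(y)\rarrow N(x)$ is nonzero precisely when the action map $\Hom_\sE(x,y)\ot_k N^*(x)\rarrow N^*(y)$ is. Granting these two facts, $L$ and $N$ cofinite give $N^*$ and $L^*$ contrafinite, Lemma~\ref{contrafinite-extension-closed} gives $M^*$ contrafinite, and hence $M$ is cofinite. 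So your proof stands, but the dichotomy at the end is spurious: both arguments are available in full generality, and the one the paper actually spells out is the duality reduction.
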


\begin{proof}
 This is the dual version of Lemma~\ref{contrafinite-extension-closed},
provable by exactly the same argument with the action maps going in
the opposite direction.
 Alternatively, one can reduce the problem at hand to the assertion
of Lemma~\ref{contrafinite-extension-closed} using the vector space
dualization functor $N\longmapsto N^*$ of
Section~\ref{vector-space-dualization-secn}.
 One easily observes that a right $\sE$\+module $N$ is cofinite if and
only if the left $\sE$\+module $P=N^*$ is contrafinite.
 Given a short exact sequence $0\rarrow L\rarrow M\rarrow N\rarrow0$
of right $\sE$\+modules, consider the short exact sequence of left
$\sE$\+modules $0\rarrow N^*\rarrow M^*\rarrow L^*\rarrow0$.
 If the right $\sE$\+modules $L$ and $N$ are cofinite, then the left
$\sE$\+modules $N^*$ and $L^*$ are contrafinite, and
Lemma~\ref{contrafinite-extension-closed} tells that the left
$\sE$\+module $M^*$ is contrafinite.
 Hence the right $\sE$\+module $M$ is cofinite.
\end{proof}

\begin{cor} \label{loc-fin-comodules-extension-closed}
 Let\/ $\sE$ be a lower strictly locally finite $k$\+linear category.
 Then the class of all right\/ $\sE$\+modules of the form\/
$\Upsilon_{\sE^\sop}(\N)$, where\/ $\N$ ranges over the locally finite
right\/ $\C_\sE$\+comodules, is closed under extensions in
the abelian category of (locally finite) right\/ $\sE$\+modules.
\end{cor}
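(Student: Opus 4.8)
The plan is to deduce the corollary directly from two results already in hand: the characterization of the essential image of $\Upsilon_{\sE^\sop}$ in Proposition~\ref{locally-finite-right-comodules}, and the extension-closure of cofinite modules in Lemma~\ref{cofinite-extension-closed}. The argument runs parallel to the proof of Corollary~\ref{loc-fin-contramodules-extension-closed}.

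First I would reinterpret the class of modules appearing in the statement. By Proposition~\ref{locally-finite-right-comodules}, a locally finite right $\sE$\+module $N$ lies in the essential image of $\Upsilon_{\sE^\sop}$ (on locally finite comodules) if and only if $N$ is cofinite. Hence the right $\sE$\+modules $\Upsilon_{\sE^\sop}(\N)$ in the statement are exactly the locally finite cofinite right $\sE$\+modules, and the task reduces to showing that this class is extension-closed.

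Next, given a short exact sequence $0\rarrow L\rarrow M\rarrow N\rarrow0$ in which $L$ and $N$ are locally finite and cofinite, I would verify the two defining properties for $M$ separately. Local finiteness of $M$ is immediate, since at each object $x$ the space $M(x)$ is an extension of the finite-dimensional spaces $N(x)$ and $L(x)$. Cofiniteness of $M$ is precisely the content of Lemma~\ref{cofinite-extension-closed}. Applying Proposition~\ref{locally-finite-right-comodules} once more then yields a locally finite right $\C_\sE$\+comodule $\M$ with $\Upsilon_{\sE^\sop}(\M)\simeq M$, as required.

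Because any extension of locally finite modules is automatically locally finite, closure in $\Modrlf\sE$ and in the ambient category $\Modr\sE$ coincide here, which accounts for the ``(locally finite)'' in the statement. I do not anticipate a real obstacle: the genuine combinatorial work lives in Lemma~\ref{cofinite-extension-closed} (itself the dualization of Lemma~\ref{contrafinite-extension-closed}), and the only point requiring care is to match the direction of the action maps in Proposition~\ref{locally-finite-right-comodules} against the cofiniteness condition.
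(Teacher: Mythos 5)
Your proposal is correct and follows exactly the route the paper intends: the paper's own (very terse) proof says precisely to combine Proposition~\ref{locally-finite-right-comodules} (identifying the class in question with the locally finite cofinite right $\sE$\+modules) with Lemma~\ref{cofinite-extension-closed} and the obvious extension-closure of local finiteness, mirroring the proof of Corollary~\ref{loc-fin-contramodules-extension-closed}. Your write-up just makes explicit the steps the paper leaves to the reader.
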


\begin{proof}
 Similar to the proof of
Corollary~\ref{loc-fin-comodules-extension-closed}, and based on
Proposition~\ref{locally-finite-right-comodules} and
Lemma~\ref{cofinite-extension-closed}.
\end{proof}

\begin{rem} \label{closure-properties-of-comodules-in-modules}
 Let $\sE$ be a locally finite $k$\+linear category and $M$ be
a left $\sE$\+module.
 Then $M$ arises from a left $\C_\sE$\+comodule via the comodule
inclusion functor $\Upsilon_\sE$ if and only if, for every object
$x\in\sE$, the action map
$$
 M(x)\lrarrow\prod\nolimits_{y\in\sE}\Hom_k(\Hom_\sE(x,y),M(y))
$$
factorizes through the vector subspace
$$
 \bigoplus\nolimits_{y\in\sE}\Hom_k(\Hom_\sE(x,y),M(y))\,\subset\,
 \prod\nolimits_{y\in\sE}\Hom_k(\Hom_\sE(x,y),M(y)),
$$
that is
$$
 M(x)\lrarrow\bigoplus\nolimits_{y\in\sE}\Hom_k(\Hom_\sE(x,y),M(y)),
$$
as in~\eqref{nu-x-formula}.
 It follows that the essential image of the functor
$\Upsilon_\sE\:\C_\sE\Comodl\rarrow\sE\Modl$ is closed under
subobjects, quotients, direct sums, and direct limits in $\sE\Modl$.
 Similarly, the essential image of the functor
$\Upsilon_{\sE^\sop}\:\Comodr\C_\sE\rarrow\Modr\sE$ is closed under
subobjects, quotients, direct sums, and direct limits in $\Modr\sE$.

 More generally, for any coalgebra $\C$ over~$k$ and any dense
subring $R\subset\C^*$, the essential image of the fully faithful
composition of inclusion/forgetful functors $\C\Comodl\rarrow\C^*\Modl
\rarrow R\Modl$ is closed under submodules, quotients, direct sums,
and direct limits in $R\Modl$.
\end{rem}

\begin{lem} \label{direct-union-of-locally-finite}
 Let\/ $\sE$ be a locally finite $k$\+linear category.
 Then every\/ $\sE$\+module is the direct limit of its locally finite\/
$\sE$\+submodules.
\end{lem}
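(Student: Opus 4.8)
The plan is to realize $M$ as the direct limit of the directed family of all its locally finite $\sE$\+submodules, ordered by inclusion. Since $\sE\Modl$ is a Grothendieck abelian category, the direct limit of a directed family of subobjects is computed as their union (the canonical map out of the direct limit is a monomorphism whose image is the sum of the subobjects). So it suffices to prove two things: that the locally finite $\sE$\+submodules of $M$ form a directed family, and that every element of $M$ is contained in one of them.

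For directedness, I would observe that the pointwise sum $M'+M''$ of two $\sE$\+submodules $M'$, $M''\subset M$ is again an $\sE$\+submodule, with $(M'+M'')(x)=M'(x)+M''(x)$ for all $x\in\sE$. If $M'$ and $M''$ are locally finite, then $M'(x)$ and $M''(x)$ are finite-dimensional, hence so is their sum; thus $M'+M''$ is locally finite. As the zero submodule is locally finite, the family is nonempty and upward directed.

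The main point is the second claim. Given an object $x\in\sE$ and a vector $v\in M(x)$, I would consider the $\sE$\+submodule $\langle v\rangle\subset M$ generated by~$v$, i.e., the image of the morphism $\Hom_\sE(x,{-})\rarrow M$ corresponding to~$v$ under the Yoneda isomorphism between $M(x)$ and the space of $\sE$\+module maps $\Hom_\sE(x,{-})\rarrow M$. Using unitality and associativity of the $\sE$\+action, one checks that $\langle v\rangle(y)$ equals the image of the $k$\+linear map $\Hom_\sE(x,y)\rarrow M(y)$, \ $f\mapsto f\cdot v$, obtained from the action map $\Hom_\sE(x,y)\ot_k M(x)\rarrow M(y)$ by fixing the second argument at~$v$; indeed, the action of a morphism $g\:y\rarrow z$ sends $f\cdot v$ to $(gf)\cdot v$, so this collection of subspaces is already closed under the $\sE$\+action and contains $v=\id_x\cdot v$. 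The crucial observation---and essentially the only place a finiteness hypothesis enters---is that $\Hom_\sE(x,y)$ is finite-dimensional by condition~(i) of Definition~\ref{locally-finite-category}, so its image $\langle v\rangle(y)\subset M(y)$ is finite-dimensional for every $y\in\sE$. Hence $\langle v\rangle$ is a locally finite $\sE$\+submodule containing~$v$.

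Combining the two claims, the union of all locally finite $\sE$\+submodules of $M$ is all of $M$, and therefore $M$ is the direct limit of this directed family. I expect the only step requiring genuine verification to be the explicit description of $\langle v\rangle(y)$ together with its finite-dimensionality; the directedness and the passage to the union are formal facts about filtered colimits of subobjects in a Grothendieck category. It is worth noting that interval finiteness, condition~(ii) of Definition~\ref{locally-finite-category}, plays no role here: finite-dimensionality of the morphism spaces alone suffices.
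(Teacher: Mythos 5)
Your proof is correct and follows essentially the same route as the paper: both arguments rest on the observation that submodules generated by finitely many elements are locally finite because the spaces $\Hom_\sE(x,y)$ are finite-dimensional (condition~(i) only), plus directedness of the poset of locally finite submodules via sums. The paper phrases this through finitely generated submodules (quotients of finite direct sums of representables $F_y$), while you work with the cyclic submodule $\langle v\rangle$, the image of a single representable---a cosmetic difference.
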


\begin{proof}
 Given an object $y\in\sE$, the \emph{free right\/ $\sE$\+module with
one generator sitting at~$y$} is defined as the functor
$\sE^\sop\rarrow k\Vect$ represented by the object~$y$.
 In other words, the free right $\sE$\+module $F_y$ has the components
$F_y(x)=\Hom_\sE(x,y)$ and the action maps induced by
the multiplication/composition of morphisms in~$\sE$.
 A right $\sE$\+module is said to be \emph{finitely generated} if it
is a quotient of a finitely generated free right $\sE$\+module, i.~e.,
a quotient of a finite direct sum of right $\sE$\+modules of the form
$F_y$, \,$y\in\sE$.
 The point is that every $\sE$\+module is the direct limit of its
finitely generated $\sE$\+submodules.
 This holds over any $k$\+linear category~$\sE$.

 When $\sE$ is locally finite, all finitely generated $\sE$\+modules
are locally finite (because the vector spaces of morphisms
$\Hom_\sE(x,y)$ are finite-dimensional).
 Finally, the sum of any two locally finite submodules in any
$\sE$\+module is locally finite; so the poset of all locally finite
submodules is directed by inclusion.
\end{proof}

 Now we can prove the main result of this section.
 It is not very original (cf.~\cite[Lemma~1.2 and Theorem~4.8]{Iov}),
but we include it in this paper, together with a full proof, for
the sake of completeness of the exposition.

\begin{prop} \label{comodules-extension-closed}
 Let\/ $\sE$ be a lower strictly locally finite $k$\+linear category.
 Then the essential image of the comodule inclusion functor\/
$\Upsilon_{\sE^\sop}\:\Comodr\C_\sE\rarrow\Modr\sE$ is closed under
extensions in the category of right\/ $\sE$\+modules\/ $\Modr\sE$.
\end{prop}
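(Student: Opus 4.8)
The plan is to reduce the assertion to the locally finite case, which is already settled by Lemma~\ref{cofinite-extension-closed}, and to bridge the gap using the presentation of an arbitrary module as a direct limit of its locally finite submodules (Lemma~\ref{direct-union-of-locally-finite}) together with the closure of the essential image under direct limits recorded in Remark~\ref{closure-properties-of-comodules-in-modules}. The key conceptual point is that extension-closure cannot be checked directly at the level of comodules; instead one must descend to the locally finite layer, where the combinatorial content of Lemma~\ref{cofinite-extension-closed} has force, and then climb back up.

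Concretely, I would start from a short exact sequence $0\rarrow L\rarrow M\rarrow N\rarrow 0$ in $\Modr\sE$ in which both $L$ and $N$ lie in the essential image of $\Upsilon_{\sE^\sop}$, and aim to show that $M$ does too. By Lemma~\ref{direct-union-of-locally-finite} I would write $M=\varinjlim_i M_i$, where $(M_i)$ is the directed family of all locally finite right $\sE$\+submodules of $M$. For each index~$i$, setting $L_i=L\cap M_i$ and letting $N_i\subset N$ be the image of $M_i$ under $M\rarrow N$ produces a short exact sequence $0\rarrow L_i\rarrow M_i\rarrow N_i\rarrow 0$ of locally finite right $\sE$\+modules.

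The heart of the argument is then that both ends of this sequence are cofinite. Indeed $L_i$ is a submodule of $L$ and $N_i$ is a submodule of $N$, so by the closure of the essential image under subobjects (Remark~\ref{closure-properties-of-comodules-in-modules}) both $L_i$ and $N_i$ still belong to the essential image of $\Upsilon_{\sE^\sop}$; being moreover locally finite, they are cofinite by Proposition~\ref{locally-finite-right-comodules}. Lemma~\ref{cofinite-extension-closed} then yields that $M_i$ is cofinite, and since $M_i$ is locally finite, Proposition~\ref{locally-finite-right-comodules} applies in the opposite direction to place $M_i$ in the essential image of $\Upsilon_{\sE^\sop}$. Finally, $M=\varinjlim_i M_i$ is a direct limit of modules lying in the essential image, which is closed under direct limits by Remark~\ref{closure-properties-of-comodules-in-modules}; hence $M$ lies in the essential image as well.

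I expect the only genuine input to be the extension-closure of cofinite modules, Lemma~\ref{cofinite-extension-closed}, which is precisely where the hypothesis of left strict local finiteness is actually used; all the remaining steps are formal bookkeeping. The main thing to be careful about is the passage between ``lying in the essential image'' and ``being cofinite'': this equivalence is available only for \emph{locally finite} modules (via Proposition~\ref{locally-finite-right-comodules}), which is exactly why one cannot argue with $M$ directly and must work with the locally finite approximants $M_i$ before reassembling $M$ as their direct limit.
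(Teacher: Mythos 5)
Your proof is correct and takes essentially the same route as the paper's: restrict the given extension to the directed family of locally finite submodules of $M$, use closure of the essential image under subobjects together with the cofiniteness characterization (Proposition~\ref{locally-finite-right-comodules} combined with Lemma~\ref{cofinite-extension-closed}) to place each locally finite approximant in the essential image, and then reassemble $M$ via Lemma~\ref{direct-union-of-locally-finite} and closure under direct limits from Remark~\ref{closure-properties-of-comodules-in-modules}. The only cosmetic difference is that you inline the content of Corollary~\ref{loc-fin-comodules-extension-closed} (the locally finite case) instead of citing it directly.
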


\begin{proof}
 Let $0\rarrow L\rarrow M\rarrow N\rarrow0$ be a short exact sequence
of right $\sE$\+modules.
 Assume that the $\sE$\+modules $L$ and $N$ arise from right
$\C_\sE$\+comodules, that is $L=\Upsilon_{\sE^\sop}(\LL)$ and
$N=\Upsilon_{\sE^\sop}(\N)$.

 Let $M'\subset M$ be a locally finite submodule in~$M$.
 Denoting by $L'\subset L$ the full preimage of $M'$ under
the injective morphism $L\rarrow M$, and by $N'\subset N$ the image
of $M'$ under the surjective morphism $M\rarrow N$, we have a short
exact sequence of right $\sE$\+modules $0\rarrow L'\rarrow M'
\rarrow N'\rarrow0$.
 By Remark~\ref{closure-properties-of-comodules-in-modules}, both
the $\sE$\+modules $L'$ and $N'$ arise from $\C_\sE$\+comodules,
i.~e., $L'=\Upsilon_{\sE^\sop}(\LL')$ and
$N'=\Upsilon_{\sE^\sop}(\N')$ (since $L'$ is a submodule in $L$ and
$N'$ is a submodule in~$N$).

 Clearly, both the $\sE$\+modules $L'$ and $N'$ are locally finite
(since the $\sE$\+module $M'$~is).
 By Corollary~\ref{loc-fin-comodules-extension-closed}, we can conclude
that the $\sE$\+module $M'$ arises from a $\C_\sE$\+comodule, too,
that is $M'=\Upsilon_{\sE^\sop}(\M')$.
 Finally, by Lemma~\ref{direct-union-of-locally-finite},
the $\sE$\+module $M$ is the direct limit of its locally finite
submodules~$M'$.
 So Remark~\ref{closure-properties-of-comodules-in-modules} tells that
$M$ belongs to the essential image of~$\Upsilon_{\sE^\sop}$.
 Specifically, one has $M=\Upsilon_{\sE^\sop}(\M)$, where
$\M=\varinjlim_{M'\subset M}\M'$.
\end{proof}

\begin{rem}
 The exposition in this paper leaves a strong impression that
contramodules are harder to work with than comodules.
 The proof of Theorem~\ref{main-full-and-faithfulness-theorem} is
much more complicated than that of
Proposition~\ref{comodule-fully-faithful}, and requires stronger
assumptions.
 The proof of Theorem~\ref{locally-finite-contramodules-described}
is also much more complicated than that of
Theorem~\ref{locally-finite-comodules-described}, and requires
stronger assumptions.
 Finally, in this section, the assumptions of
Corollary~\ref{loc-fin-contramodules-extension-closed} are the same
as in Proposition~\ref{comodules-extension-closed}, but
the conclusion of the proposition is stronger, in that it applies
to all comodules, while the conclusion of the corollary is only
applicable to locally finite contramodules.

 Much of this feeling of easy comodules and difficult contramodules
disappears, however, when the results are put into proper context.
 It was discovered and explained in the paper~\cite{Phff} that
difficulties arising the comodule theory largely correspond to those
of the contramodule theory \emph{with the comodule phenomena
occuring in the cohomological degree\/~$1$ or\/~$2$ higher than
the contramodule ones}.
 One cohomological level up from the question of full-and-faithfulness
of an exact functor between abelian categories (that is,
full-and-faithfulness on Hom) is the question of extension closedness
of the essential image (that is, full-and-faithfulness on $\Ext^1$).

 And indeed, the results of Proposition~\ref{comodule-fully-faithful}
and Theorem~\ref{locally-finite-comodules-described} did not yet need
the lower strict local finiteness assumption on~$\sE$; they were valid
over any locally finite $k$\+linear category.
 But the resulf of Proposition~\ref{comodules-extension-closed}
already depends on the lower strict local finiteness.
 At least, it is \emph{not} true for an arbitrary locally finite
$k$\+linear category~$\sE$, as one can easily see in the case of
the category $\sE$ from Example~\ref{contra-ff-counterex}.

 For the same reason, based on the analogy with the case of conilpotent
coalgebras fully treated in the paper~\cite{Phff}, we expect that
the assertion of Corollary~\ref{loc-fin-contramodules-extension-closed}
should \emph{not} be true for locally infinite contramodules over
a lower strictly locally finite $k$\+linear category $\sE$ in general.
 See~\cite[Theorems~6.2 and~7.1 for $n=2$]{Phff} for comparison;
cf.~\cite[Remark~5.7 for $n=1$]{Phff}.
\end{rem}

\Section{Upper and Lower Locally Finite Categories}
\label{upper-lower-secn}

 We start with a simple counterexample showing that the conditions
of Theorems~\ref{locally-finite-comodules-described}
and~\ref{locally-finite-contramodules-described} are nontrivial and
do not hold automatically.
 The functors $\Upsilon_\sE$ and $\Theta_\sE$ are \emph{not}
essentially surjective in general; not even for locally finite
left modules over a lower strictly locally finite
$k$\+linear category~$\sE$.

\begin{ex} \label{noncomodule-noncontramodule-counterex}
 Let $\sE$ be the $k$\+linear category whose set of objects is
the set of all integers~$\boZ$.
 For any integers $m$, $n\in\boZ$, the $k$\+vector space
$\Hom_\sE(m,n)$ is one-dimensional if $m\le n$ and zero otherwise.
 The composition maps $\Hom_\sE(m,n)\ot_k\Hom_\sE(l,m)\rarrow
\Hom_\sE(l,n)$ are isomorphisms for all $l\le m\le n$.
 One can easily see that $\sE$ is a lower (and upper) strictly
locally finite $k$\+linear category.

 Let $N$ be the following left $\sE$\+module.
 The vector space $N(n)$ is one-dimensional for every object $n\in\sE$.
 The action map $\Hom_\sE(m,n)\ot_k N(m)\rarrow N(n)$ is an isomorphism
for all $m\le n$.
 So $N$ is a locally finite left $\sE$\+module.
 It is clear that \emph{neither} the condition of
Theorem~\ref{locally-finite-comodules-described} \emph{nor} the one of
Theorem~\ref{locally-finite-contramodules-described} is satisfied
for the $\sE$\+module~$N$.
 The ``only if'' implications of the two theorems tell that
the $\sE$\+module $N$ does \emph{not} arise either from a left
$\C_\sE$\+comodule, or from a left $\C_\sE$\+contramodule.
\end{ex}

\begin{rem} \label{nonuniform-remark}
 Just as the category $\sE$ from Example~\ref{contra-ff-counterex} is
a convienient source of simple counterexamples to statements about
locally finite $k$\+linear categories that are not lower strictly
locally finite, the category $\sE$ from
Example~\ref{noncomodule-noncontramodule-counterex} is a source of
simple counterexamples to statements about lower and upper strictly
locally finite $k$\+linear categories.
 For example, consider the following family of left modules $N_l$,
\,$l\ge0$, over the $k$\+linear category $\sE$ from
Example~\ref{noncomodule-noncontramodule-counterex}.
 The vector space $N_l(n)$ is one-dimensional for $-l\le n\le l$
and zero otherwise.
 The action map $\Hom_\sE(m,n)\ot_k N(m)\rarrow N(n)$ is
an isomorphism for all $-l\le m\le n\le l$.

 Both the conditions of
Theorem~\ref{locally-finite-comodules-described} and
Theorem~\ref{locally-finite-contramodules-described} are satisfied
for each one of the $\sE$\+modules~$N_l$; so these $\sE$\+modules
belong to the essential images of both the functors $\Upsilon_\sE$
and~$\Theta_\sE$.
 But the finite sets of objects appearing in the conditions of
the two theorems \emph{cannot be chosen uniformly} for all
the modules $N_l$ as $l$~tends to infinity.
 Thus the left $\sE$\+module $\bigoplus_{l=0}^\infty N_l$, which
is \emph{not} locally finite, belongs to the essential image of
$\Upsilon_\sE$ but does \emph{not} satisfy the condition of
Theorem~\ref{locally-finite-comodules-described}.
 Similarly, the left $\sE$\+module $\prod_{l=0}^\infty N_l$, which
is \emph{not} locally finite, belongs to the essential image of
$\Theta_\sE$ but does \emph{not} satisfy the condition of
Theorem~\ref{locally-finite-contramodules-described} (in other
words, the $\sE$\+module $\prod_{l=0}^\infty N_l$ is not
contrafinite in the sense of Definition~\ref{contrafinite-module}).
\end{rem}

 The aim of this section is to formulate simple conditions on
a locally finite $k$\+linear category $\sE$ that exclude the behavior
described in Example~\ref{noncomodule-noncontramodule-counterex}.
 In other words, we provide sufficient conditions under which either
the comodule inclusion functor $\Upsilon_\sE$, or the contramodule
forgetful functor $\Theta_\sE$ is a category equivalence.

\begin{defn} \label{upper-lower-finite}
 Let $\sE$ be a locally finite $k$\+linear category.
 We will say that $\sE$ is \emph{upper finite} if, for every object
$x\in\sE$, the set of all objects $y\in\sE$ such that $x\preceq y$
is finite.
 Dually, $\sE$ is said to be \emph{lower finite} if, for every object
$y\in\sE$, the set of all objects $x\in\sE$ such that $x\preceq y$
is finite.
 (The preorder~$\preceq$ from
Definition~\ref{morphism-preorder} on the set of all objects of
$\sE$ is presumed here.)
\end{defn}

 It is clear that any lower finite $k$\+linear category is
lower strictly locally finite in the sense of
Definition~\ref{lower-strict-locally-finite}.

\begin{prop} \label{upper-finite-comodule-equivalence}
 For any upper finite $k$\+linear category\/ $\sE$, the comodule
inclusion functor\/ $\Upsilon_\sE\:\C_\sE\Comodl\rarrow\sE\Modl$
is an equivalence of categories.
\end{prop}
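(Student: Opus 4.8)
The plan is to combine the full-and-faithfulness already established in Proposition~\ref{comodule-fully-faithful} with a direct verification of essential surjectivity, using the intrinsic characterization of the essential image recorded in Remark~\ref{closure-properties-of-comodules-in-modules}. Since Proposition~\ref{comodule-fully-faithful} tells us that $\Upsilon_\sE$ is exact and fully faithful for any locally finite $k$\+linear category, and an upper finite category is in particular locally finite, it remains only to check that $\Upsilon_\sE$ is essentially surjective. So I would fix an arbitrary left $\sE$\+module $M$ (\emph{not} necessarily locally finite) and show that $M$ lies in the essential image.

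By Remark~\ref{closure-properties-of-comodules-in-modules}, $M$ arises from a left $\C_\sE$\+comodule via $\Upsilon_\sE$ if and only if, for every object $x\in\sE$, the natural action map
$$
 M(x)\lrarrow\prod\nolimits_{y\in\sE}\Hom_k(\Hom_\sE(x,y),M(y))
$$
factorizes through the direct sum $\bigoplus_{y\in\sE}\Hom_k(\Hom_\sE(x,y),M(y))$. The key observation is that each factor $\Hom_k(\Hom_\sE(x,y),M(y))$ vanishes unless $\Hom_\sE(x,y)\neq 0$, while a nonzero morphism $x\rarrow y$ forces $x\preceq y$ by Definition~\ref{morphism-preorder}. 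Since $\sE$ is upper finite, the set $\{y\in\sE\mid x\preceq y\}$ is finite for every fixed~$x$; hence only finitely many factors of the product are nonzero. Consequently the product coincides with the direct sum, and the required factorization holds automatically.

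Therefore every left $\sE$\+module arises from a left $\C_\sE$\+comodule, so $\Upsilon_\sE$ is essentially surjective; together with full-and-faithfulness this makes it an equivalence of categories. I do not anticipate a genuine obstacle here: the entire content of the upper finiteness hypothesis is precisely that the potentially infinite product appearing in the comodule condition collapses to a finite one, so that the comodule structure is left unconstrained. The one point to watch is that the argument must be carried out for \emph{all} left $\sE$\+modules rather than only the locally finite ones; for this reason I would invoke the module-level criterion of Remark~\ref{closure-properties-of-comodules-in-modules} rather than Theorem~\ref{locally-finite-comodules-described}, whose hypotheses and conclusion are restricted to the locally finite setting.
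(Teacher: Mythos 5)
Your proof is correct, and it takes a somewhat different route from the paper's. The paper proves the equivalence in a single stroke by a correspondence of data: a left $\C_\sE$\+comodule is the same thing as a family of vector spaces $\M_x$ equipped with coaction maps $\nu_x\:\M_x\rarrow\bigoplus\nolimits_{y\in\sE}^{x\preceq y}\C_\sE^{x,y}\ot_k\M_y$ as in~\eqref{nu-x-formula}; upper finiteness makes this direct sum finite, so under the duality $\Hom_\sE(x,y)\simeq(\C_\sE^{x,y})^*$ such maps are precisely the $\sE$\+action maps, and the coassociativity/counitality equations on the~$\nu_x$ translate into the associativity/unitality equations on the action maps. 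You instead decompose the claim into full-and-faithfulness (quoted from Proposition~\ref{comodule-fully-faithful}) plus essential surjectivity, which you deduce from the criterion of Remark~\ref{closure-properties-of-comodules-in-modules}; the key finiteness observation is the same as the paper's --- for fixed~$x$, only the finitely many objects~$y$ with $x\preceq y$ can contribute, so the product collapses to the direct sum and the factorization condition is vacuous. What your packaging buys is brevity and the reuse of earlier results, so that the correspondence of axioms never needs to be re-verified; what it costs is reliance on the ``if'' direction of Remark~\ref{closure-properties-of-comodules-in-modules} (that the factorization condition really yields a comodule structure, coassociativity and counitality included), which the paper states as an observation rather than proving in detail --- the paper's self-contained argument for this proposition is exactly what makes that step explicit. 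Finally, your caution in invoking the remark rather than Theorem~\ref{locally-finite-comodules-described} is well placed and necessary: the uniform finiteness condition of that theorem is genuinely stronger than membership in the essential image for modules that are not locally finite (cf.\ Remark~\ref{nonuniform-remark}), so quoting the theorem alone would not have covered arbitrary $\sE$\+modules.
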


 Similarly, for any lower finite $k$\+linear category $\sE$,
the comodule inclusion functor $\Upsilon_{\sE^\sop}\:\Comodr
\C_\sE\rarrow\Modr\sE$ is an equivalence of categories.

\begin{proof}
 Using the notation from
Construction~\ref{from-comodule-construction} and from
the proof of Theorem~\ref{locally-finite-comodules-described},
one can say that, for any locally finite $k$\+linear category $\sE$,
the datum of a left $\C_\sE$\+comodule $\M$ is equivalent to
that of a family of $k$\+vector spaces $\M_x$, indexed by the objects
$x\in\sE$ and endowed with the maps
$$
 \nu_x\:\M_x\lrarrow\bigoplus\nolimits_{y\in\sE}^{x\preceq y}
 \C_\sE^{x,y}\ot_k\M_y
$$
as in~\eqref{nu-x-formula}.
 When the category $\sE$ is upper finite, the direct sum in
the right-hand side of this map is finite.
 For any $k$\+linear category $\sE$, and all objects $x$, $y\in\sE$,
we have the isomorphisms $\Hom_\sE(x,y)\simeq\bigl(\C_\sE^{x,y})^*$
with finite-dimensional vector spaces~$\C_\sE^{x,y}$.
 Hence the datum of the maps~$\nu_x$ is equivalent to that of
the action maps $\Hom_\sE(x,y)\ot_k M(x)\rarrow M(y)$ for all
$x$, $y\in\sE$, where $M(x)=\M_x$.
 The coassociativity and counitality equations on the maps~$\nu_x$
are equivalent to the associativity and unitality equations on
the action maps defining a left $\sE$\+module structure on~$M$.
\end{proof}

\begin{prop} \label{lower-finite-contramodule-equivalence}
 For any lower finite $k$\+linear category\/ $\sE$, the contramodule
forgetful functor\/ $\Theta_\sE\:\C_\sE\Contra\rarrow\sE\Modl$
is an equivalence of categories.
\end{prop}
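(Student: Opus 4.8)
The plan is to dualize the proof of Proposition~\ref{upper-finite-comodule-equivalence}, showing directly that under lower finiteness the datum of a left $\C_\sE$\+contramodule coincides with the datum of a left $\sE$\+module. Recall from Construction~\ref{from-contramodule-construction} that a left $\C_\sE$\+contramodule $\fP$ is the same as a family of $k$\+vector spaces $(\fP^x)_{x\in\sE}$ equipped with a contraaction whose components are the maps $\pi^y\colon\prod_{x\in\sE}\Hom_k(\C_\sE^{x,y},\fP^x)\rarrow\fP^y$ of~\eqref{pi-y-formula}. The crucial point, and the only place where the hypothesis enters, is that $\C_\sE^{x,y}=\Hom_\sE(x,y)^*$ vanishes unless $x\preceq y$ (a nonzero morphism $x\rarrow y$ already witnesses $x\preceq y$); hence, when $\sE$ is lower finite, for each fixed $y$ only finitely many indices $x$ contribute to the product above, and it collapses to the finite direct sum $\bigoplus_{x\preceq y}\Hom_k(\C_\sE^{x,y},\fP^x)$.

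First I would use the isomorphisms $\Hom_\sE(x,y)\simeq(\C_\sE^{x,y})^*$ for the finite-dimensional spaces $\C_\sE^{x,y}$ to rewrite each summand as $\Hom_k(\C_\sE^{x,y},\fP^x)\simeq\Hom_\sE(x,y)\ot_k\fP^x$, so that $\pi^y$ becomes a map $\bigoplus_{x\preceq y}\Hom_\sE(x,y)\ot_k\fP^x\rarrow\fP^y$, i.e.\ precisely the collection of $\sE$\+action maps $\Hom_\sE(x,y)\ot_k P(x)\rarrow P(y)$ for the left $\sE$\+module $P$ with $P(x)=\fP^x$. Next I would check that the contraassociativity and contraunitality axioms on $\pi$ translate, term by term, into the associativity and unitality axioms on the action maps; this is the exact dual of the coassociativity/counitality bookkeeping in the proof of Proposition~\ref{upper-finite-comodule-equivalence}, and it is routine once the infinite product has been replaced by a finite sum. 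This yields mutually inverse constructions between $\C_\sE\Contra$ and $\sE\Modl$ compatible with $\Theta_\sE$, hence the equivalence. Equivalently, one can assemble the statement from the two main theorems: since a lower finite category is left strictly locally finite (as noted before the proposition), Theorem~\ref{main-full-and-faithfulness-theorem} gives that $\Theta_\sE$ is fully faithful, and it remains only to prove essential surjectivity.

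The hard part to watch is that essential surjectivity must be verified for \emph{all} left $\sE$\+modules, not merely the locally finite ones, so one cannot simply invoke an equivalence of locally finite subcategories. What rescues the argument is that lower finiteness forces every left $\sE$\+module $P$ to be contrafinite: taking the finite set $Z_y=\{x\in\sE\mid x\preceq y\}$, the action map $\Hom_\sE(x,y)\ot_k P(x)\rarrow P(y)$ vanishes for all $x\notin Z_y$, since $\Hom_\sE(x,y)=0$ there. The construction in the already-proved ``if'' implication of Theorem~\ref{locally-finite-contramodules-described} — which requires neither local finiteness of $P$ nor strict local finiteness of $\sE$ — then applies verbatim, producing a $\C_\sE$\+contramodule $\fP=\prod_{x\in\sE}\fP^x$ with $\Theta_\sE(\fP)=P$. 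Beyond this there is no genuine obstacle: all the difficulty of the contramodule theory lived in the full-and-faithfulness statement, and the collapse of the infinite product to a finite direct sum is exactly the feature that fails in Example~\ref{contra-ff-counterex} and is restored by the lower finiteness hypothesis.
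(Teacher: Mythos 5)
Your main argument is exactly the paper's proof: under lower finiteness the product in~\eqref{pi-y-formula} collapses to the finite direct sum $\bigoplus\nolimits_{x\preceq y}\Hom_k(\C_\sE^{x,y},\fP^x)\simeq\bigoplus\nolimits_{x\preceq y}\Hom_\sE(x,y)\ot_k\fP^x$, so the datum of the maps~$\pi^y$ becomes the datum of the $\sE$\+action maps, and the contraassociativity/contraunitality axioms translate into associativity/unitality. Your alternative assembly (full-and-faithfulness from Theorem~\ref{main-full-and-faithfulness-theorem}, plus essential surjectivity from the observation that lower finiteness makes every left $\sE$\+module contrafinite, fed into the ``if'' implication of Theorem~\ref{locally-finite-contramodules-described}) is also correct, though the paper does not take that route.
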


 Similarly, for any upper finite $k$\+linear category $\sE$,
the contramodule forgetful functor $\Theta_{\sE^\sop}\:\Contrar
\C_\sE\rarrow\Modr\sE$ is an equivalence of categories.

\begin{proof}
 Using the notation from
Construction~\ref{from-contramodule-construction} and from
the proof of the ``if'' implication of
Theorem~\ref{locally-finite-contramodules-described},
one can say that, for any locally finite $k$\+linear category $\sE$,
the datum of a left $\C_\sE$\+contramodule $\fP$ is equivalent to
that of a family of $k$\+vector spaces $\fP^x$, indexed by the objects
$x\in\sE$ and endowed with the maps
$$
 \pi^y\:\prod\nolimits_{x\in\sE}^{x\preceq y}
 \Hom_k(\C_\sE^{x,y},\fP^x)\lrarrow\fP^y, \qquad y\in\sE
$$
as in~\eqref{pi-y-formula}.
 When the category $\sE$ is lower finite, the direct product in
the left-hand side of this map is finite.
 Hence the datum of the maps~$\pi^y$ is equivalent to that of
the action maps $\Hom_\sE(x,y)\ot_k P(x)\rarrow P(y)$ for all
$x$, $y\in\sE$, where $P(x)=\fP^x$.
 The contraassociativity and contraunitality equations on
the maps~$\pi^y$ are equivalent to the associativity and unitality
equations on the action maps defining a left $\sE$\+module
structure on~$P$.
\end{proof}

\begin{exs}
 The following examples of locally finite $k$\+linear categories,
which are perhaps more substantial or interesting than the examples
above in this paper, are discussed at length in
the paper~\cite[Sections~6.2\+-6.4]{Psa}.

 First of all, there are three main examples of ambient $k$\+linear
categories which are \emph{not} locally finite in the sense of
our Definition~\ref{locally-finite-category}.
 These are: the Brauer diagram category $\Br(\delta)$
in~\cite[Section~6.2]{Psa}; the Temperley--Lieb diagram category
$\TL(\delta)$ in~\cite[Section~6.3]{Psa}; and the $k$\+linearization
$k[\Delta S]$ of the Reedy category $\Delta S$ of simplices in
a simplicial set $S$ in~\cite[Section~6.4]{Psa}.
 Here $\delta\in k$ is a parameter.
 All the three categories have finite-dimensional Hom spaces, so they
satisfy the condition of Definition~\ref{locally-finite-category}(i);
but the interval finiteness condition~(ii) from this definition does
\emph{not} hold for them.

 The $k$\+linear categories relevant as examples for the present are
certain subcategories (on the same set of objects) in
$\Br(\delta)$, \,$\TL(\delta)$, and $k[\Delta S]$ appearing as
components of triangular decompositions of these ambient $k$\+linear
categories.
 Following the discussion in~\cite[Section~6]{Psa}, we have:
\begin{itemize}
\item the $k$\+linear categories $\Br^+(\delta)$ and $\Br^{+=}(\delta)$
are lower finite and upper strictly locally
finite~\cite[Lemmas~6.6(a) and~6.7(a)]{Psa};
\item the $k$\+linear categories $\Br^-(\delta)$ and $\Br^{=-}(\delta)$
are upper finite and lower strictly localy
finite~\cite[Lemmas~6.6(b) and~6.7(b)]{Psa};
\item the $k$\+linear category $\TL^+(\delta)$ is lower finite and
upper strictly locally finite~\cite[Lemmas~6.10(a) and~6.11(a)]{Psa};
\item the $k$\+linear category $\TL^-(\delta)$ is upper finite and
lower strictly locally finite~\cite[Lemmas~6.10(b) and~6.11(b)]{Psa};
\item the $k$\+linear category $k[\Delta^-S]$ is upper finite and
lower strictly locally finite~\cite[Lemmas~6.15(b) and~6.16(b)]{Psa};
\item the $k$\+linear category $k[\Delta^+S]$ is lower
finite~\cite[Lemma~6.15(a)]{Psa};
\item for a simplicial set $S$ whose set of $n$\+simplices $S_n$ is
finite for every $n\ge0$, the $k$\+linear category $k[\Delta^+S]$
is upper strictly locally finite~\cite[Lemma~6.16(a)]{Psa}.
\end{itemize}
\end{exs}

\bigskip

\end{document}